\numberwithin{equation}{section}
\theoremstyle{plain}
\newtheorem{prop}{Proposition}
\newtheorem{theo}[prop]{Theorem}
\newtheorem{coro}[prop]{Corollary}
\newtheorem{lemm}[prop]{Lemma}
\theoremstyle{definition}
\newtheorem{defi}[prop]{Definition}
\newtheorem{conj}[prop]{Conjecture}
\newtheorem{rema}[prop]{Remark}
\newtheorem{exam}[prop]{Example}
\def\ra{\rightarrow}
\def\cL{{\mathcal L}}
\def\cO{{\mathcal O}}
\def\rk{{\mathrm{rk}}}
\def\bC{{\mathbb C}}
\def\bG{{\mathbb G}}
\def\bP{{\mathbb P}}
\def\bQ{{\mathbb Q}}
\def\bR{{\mathbb R}}
\def\bZ{{\mathbb Z}}
\def\rH{{\mathrm H}}
\def\sB{{\mathsf B}}
\def\sH{{\mathsf H}}
\def\Bl{\mathrm{Bl}}
\def\eff{\mathrm{eff}}
\def\Pic{\mathrm{Pic}}
\def\NS{\mathrm{NS}}
\author{Brendan Hassett}
\address{Department of Mathematics\\
Rice University, MS 136 \\
Houston, Texas  77251-1892 \\
USA}
\email{hassett@rice.edu}
\author{Sho Tanimoto }
\address{Department of Mathematics\\
Rice University, MS 136 \\
Houston, Texas  77251-1892 \\
USA}
\email{tanimoto@math.rice.edu}
\author{Yuri Tschinkel}
\address{Courant Institute\\
                New York University \\
                New York, NY 10012 \\
                USA }
\email{tschinkel@cims.nyu.edu}
\address{Simons Foundation\\ 160 Fifth Av. \\ New York, NY 10010}
\title[Balanced line bundles]{Balanced line bundles and equivariant compactifications of homogeneous spaces}
\begin{document}
\date{\today}

\begin{abstract}
Manin's conjecture predicts an asymptotic formula for the number 
of rational points of bounded height on a smooth projective variety 
$X$ in terms of global geometric invariants of $X$.
The strongest form of the conjecture implies certain 
inequalities among geometric invariants of $X$ and of its subvarieties. 
We provide a general geometric framework explaining these phenomena, via  
the notion of balanced line bundles, and prove the required inequalities for a large class of 
equivariant compactifications of homogeneous spaces. 
\end{abstract}

\maketitle

\section{Introduction} 
\label{secct:intro}

Let $X$ be a smooth projective variety over a number field. 
It is generally hoped that global geometric properties of $X$
should be reflected in its arithmetic properties. For instance, 
assume that its anticanonical class 
$-K_X$ is ample. It has been conjectured that such $X$ satisfy: 
\begin{quote}
Potential Density: there exists a finite extension  
$F$ of the ground field such that $X(F)$ is Zariski dense (see 
\cite{Harris-T}, \cite{bog-t} for first results in this direction 
and \cite{campana}, \cite{abramovich} for a description of a general framework).
\end{quote}
Supposing that $X$ has dense rational points over $F$, we can ask for quantitative versions
of density:
\begin{quote}
Asymptotic Formulas: Let $\mathcal L=(L,\|\cdot \|)$ be an ample, adelically metrized, line bundle on $X$
and $\sH_{\mathcal L}$ the associated height (for definitions and more background see, e.g., \cite[Section 4.8]{T}). 
Then there exists a Zariski open $X^\circ\subset X$ such that 
\begin{equation*}
\label{eqn:manin}
\# \{ x\in X^\circ(F) \, \mid  \, \sH_{\mathcal L}(x)\le \sB\} \sim c(X,\mathcal L) 
\sB^{a(X,L)} \log(\sB)^{b(X,L)-1}, 
\end{equation*}
as $\sB\ra \infty$. Here $a(X,L)$ and $b(X,L)$ are certain {\em geometric} constants introduced in this context in 
\cite{FMT} and \cite{BM} (and recalled in Section~\ref{sect:general}) 
and $c(X,\mathcal L)$ is a Tamagawa-type number defined in \cite{peyre}, \cite{BT}.  
\end{quote}
When $L=-K_X$ the main term of the asymptotic formula reads
$$
 \# \{ x\in X^\circ(F) \, \mid  \, \sH_{-{\mathcal K}_X}(x)\le \sB\} \sim c(X,-{\mathcal K}_X ) 
\sB \log(\sB)^{\rk\,\Pic(X)-1}
$$
as $\sB \ra \infty$, where $-{\mathcal K}_X$ is the metrized anticanonical bundle.  
For a survey addressing both aspects and containing extensive references, see \cite{T}. 

The Asymptotic Formulas raise many formal questions.  How do we choose $X^{\circ}\subset X$?  
Clearly, we want to exclude subvarieties $Y\subsetneq X$ contributing excessively to the
number of rational points.  For example, if $X$ is a split cubic surface and $L=-K_X$
then lines on $X$ contribute on the order of $\sB^2$ points of height $\le \sB$,
more than the $\sB \log(\sB)^6$ points expected from $X^{\circ}$.  

Furthermore, we
should consider carefully whether to include subvarieties $Y\subsetneq X$ contributing rational points
at the same rate as those from $X^{\circ}$.  For example, if $X\subset \bP^5$ is a complete 
intersection of two quadrics then each line of $X$ contributes on the order of $\sB^2$ points,
the same as the conjectured total for $X^{\circ}$ (see Example~\ref{exam:twobytwo}).
These lines are parametrized by an abelian surface.  Including such subvarieties
must have implications for the interpretation of the Tamagawa-type constant.  

Returning to the case of general $L$, in order for the Asymptotic Formula to be internally
consistent, 
all $Y\subsetneq X$ meeting $X^{\circ}$ must satisfy
$$(a(Y,L|_Y),b(Y,L|_Y) \leq (a(X,L),b(X,L)$$
in the lexicographic order.  Moreover, if the constant $c(X,\mathcal L)$ is to be
independent of the open set $X^{\circ}\subset X$ we must have
\begin{equation}
\label{eqn:yy}
(a(Y, L|_Y)), b(Y, L|_Y)) < (a(X,L), b(X,L)).
\end{equation}
However, there exist varieties of dimension $\ge 3$ 
where these properties fail; these provide 
counterexamples to the Asymptotic Formulas \cite{BT-cubic}. 
On the other hand, no counterexamples are known in the equivariant context, when 
$X$ is an equivariant compactification of a linear algebraic group $G$ or of
a homogeneous space $H\backslash G$, and asymptotic formulas for the number of points
of bounded height have been established for many classes of such compactifications (see \cite{T}).

These arithmetic considerations motivate us to introduce and study the notion of {\em balanced line bundles}
(see Section~\ref{sect:balanced}). In this paper, we establish basic properties of 
balanced line bundles and investigate varieties that carry such line bundles. 
One of our main results is:

\begin{theo}
\label{thm:main}
Let 
$$
H\subset M\subset G
$$ 
be a connected linear algebraic group.
Let $X$ be a smooth projective $G$-equivariant compactification of $H\backslash G$
with big anticanonical line bundle $-K_X$,
and $Y\subset X$ the induced compactification of $H\backslash M$. 
Assume that the projection $G \ra M \backslash G$ admits a rational section.
Then $-K_X$ is balanced with respect to $Y$,
i.e., inequality~\eqref{eqn:yy} holds for $L=-K_X$.
\end{theo}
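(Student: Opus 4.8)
The plan is to compare the geometric constants $(a,b)$ for $X$ and $Y$ by reducing everything to the explicit combinatorics of the effective cone and the anticanonical class. Recall that $a(X,L)$ is the smallest $t$ such that $tL + K_X$ lies in the effective (or pseudo-effective) cone, and $b(X,L)$ is the codimension of the minimal face of that cone containing $a(X,L)L + K_X$. For $L = -K_X$ one has $a(X,-K_X) = 1$, and $b(X,-K_X) = \rk\,\Pic(X)$ when $-K_X$ is big (so that $a(X,-K_X)(-K_X) + K_X = 0$ sits at the cone point). The first step, then, is to set up the restriction map $\Pic(X) \to \Pic(Y)$ and understand how $-K_X|_Y$ compares to $-K_Y$. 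The key geometric input is an \emph{adjunction-type} or \emph{fibration} relationship: because $Y$ compactifies $H\backslash M$ inside the compactification of $H\backslash G$, and because $M\backslash G$ fibers $H\backslash G$ over its own compactification, the normal bundle of $Y$ in $X$ should be controlled by the relative structure, so that $K_X|_Y = K_Y - (\text{something effective coming from the boundary/normal directions})$.

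Second, I would exploit the hypothesis that $G \to M\backslash G$ admits a rational section. This is what lets us realize $X$ (birationally, or on a dense open) as fibered over a compactification of $M\backslash G$ with generic fiber the compactification of $H\backslash M$, i.e.\ a variety isomorphic to $Y$. With such a fibration structure $\pi\colon X \dashrightarrow Z$ (where $Z$ compactifies $M\backslash G$), the anticanonical class decomposes as $-K_X = -K_{X/Z} + \pi^*(-K_Z)$ up to boundary corrections, and the relative anticanonical class restricts on the fiber $Y$ to $-K_Y$. The crucial point is that the \emph{horizontal} part $\pi^*(-K_Z)$ — together with the boundary divisors dominating $Z$ — contributes positively to $a$ and strictly enlarges the relevant face, because $\dim Z > 0$ whenever $M \subsetneq G$ (the interesting case; if $M = G$ then $Y = X$ and there is nothing to prove, while if $H = M$ then $Y$ is a point-like fiber over $Z$). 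Concretely, I expect to show that $-K_X|_Y - (-K_Y)$ is a nonzero effective class, or that the face of $\overline{\mathrm{Eff}}(Y)$ cut out by $a(Y,-K_X|_Y)(-K_X|_Y) + K_Y$ has strictly larger codimension-deficit, yielding the strict lexicographic inequality \eqref{eqn:yy}.

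Third, I would make the $(a,b)$ comparison precise. For the $a$-invariant: since $-K_X$ is big its class lies in the interior-adjacent region, and restricting to $Y$ one checks whether $-K_X|_Y$ is still big or merely effective; either $a(Y,-K_X|_Y) > 1 = a(X,-K_X)$, which immediately gives strict inequality in the first coordinate, or $a(Y,-K_X|_Y) = 1$ and one must compare $b$. In the latter case $-K_Y$ and $-K_X|_Y$ differ by an effective boundary class supported on the intersection $Y \cap \partial X$, and I would argue that the minimal face of $\overline{\mathrm{Eff}}(Y)$ containing $-K_X|_Y + K_Y$ is a \emph{proper} face — pinned down by the geometry of how the $M\backslash G$-directions meet $Y$ — so that $b(Y,-K_X|_Y) < \rk\,\Pic(Y)$. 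Here the rational section is again essential: it guarantees that the boundary components of $X$ that dominate $Z$ restrict non-trivially to independent classes on $Y$, forcing the strict drop.

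The main obstacle I anticipate is controlling the restriction of the effective cone and the anticanonical class precisely enough to separate the two sub-cases and to prove strictness rather than mere $\leq$. In the equivariant setting this is tractable because the boundary $\partial X$ is a union of $G$-invariant divisors whose classes generate (a finite-index subgroup of) $\Pic(X)$ and whose restrictions to $Y$ are computable via the combinatorics of the group action; the rational section of $G \to M\backslash G$ is exactly the device that translates the abstract inclusion $H\subset M\subset G$ into an honest product/fibration decomposition on which these class computations can be carried out. I would therefore spend most of the effort establishing the fibration $\pi\colon X\dashrightarrow Z$ with fiber $Y$ and tracking the boundary divisors through it; once the restriction of $-K_X$ and of the boundary classes to $Y$ are known explicitly, the lexicographic inequality \eqref{eqn:yy} should follow from the definitions of $a$ and $b$ together with the bigness of $-K_X$.
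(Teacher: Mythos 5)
Your overall skeleton matches the paper's: pass to an equivariant fibration $\pi\colon X\dashrightarrow X'$ onto a compactification of $M\backslash G$, realize $Y$ as a general fiber, and compare $(a,b)$ via restriction of divisor classes. But two of your key steps are wrong or unsupported as stated. First, the $a$-comparison: you write that ``either $a(Y,-K_X|_Y)>1=a(X,-K_X)$, which immediately gives strict inequality in the first coordinate, or $a(Y,-K_X|_Y)=1$.'' The inequality~\eqref{eqn:yy} reads $(a(Y,L|_Y),b(Y,L|_Y))<(a(X,L),b(X,L))$, so $a(Y,-K_X|_Y)>1$ is precisely the \emph{failure} of the conclusion, not a favorable case --- it is what happens for the singular divisor in Example~\ref{exam:MU}, and the whole point of the theorem is to rule it out. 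The correct mechanism is that a general fiber $Y$ of $\pi$ has trivial normal bundle, so $K_X|_Y=K_Y$ exactly (not ``$-K_X|_Y-(-K_Y)$ is a nonzero effective class''), whence $-K_X|_Y+K_Y=0\in\Lambda_{\eff}(Y)$ and $a(Y,-K_X|_Y)\le 1$; the two cases to treat are then $a(Y,-K_X|_Y)<1$ (done) and $a(Y,-K_X|_Y)=1$ (compare $b$).

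Second, the $b$-comparison. With $L=-K_X$ one has $b(X,-K_X)=\rk\,\NS(X)$ and, in the equality case, $b(Y,-K_X|_Y)=\rk\,\NS(Y)$, so the task is to prove $\rk\,\NS(Y)<\rk\,\NS(X)$. The strict drop does come from the nontrivial kernel $\pi^*\NS(X')$, as you suggest, but only after one knows the restriction $\NS(X)\to\NS(Y)$ is \emph{surjective}, and that is the step you do not address: for a general fiber of a fibration surjectivity can fail (monodromy can permute the components of $D_\alpha\cap Y$, cf.\ Example~\ref{exam:hilb}), and $\rk\,\NS(Y)>\rk\,\NS(X)$ is even possible for non-fiber subvarieties (Example~\ref{exam:no_control}). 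The paper obtains surjectivity from Proposition~\ref{prop:sequences} (boundary components generate the Picard group up to characters) together with Lemma~\ref{lemm:domin}, whose proof is exactly where the rational section $\sigma\colon M\backslash G\dashrightarrow G$ enters: translating by $(\sigma\circ\pi(x))^{-1}$ shows each $D_\alpha\cap Y$ is irreducible and that distinct $D_\alpha$ cut out distinct divisors on $Y$, so every boundary component of $Y$ is the restriction of one of $X$. Your phrase ``restrict non-trivially to independent classes'' points at injectivity, which is not what is needed; without the irreducibility statement the argument does not close.
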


A version of this geometric result, for $G=\mathbb G_a^n$, appeared in 
\cite[Section 7]{chambert-t02}, where it was used to bound contributions 
from nontrivial characters to the Fourier expansion of height zeta functions 
and, ultimately, to prove asymptotic formulas for the number of rational points of
bounded height (Manin's conjecture) for equivariant compactifications of $\mathbb G_a^n$.   
Another application can be found in \cite{goro-ramin-t}, where
this theorem plays an important role in an implementation of ideas from ergodic theory (mixing) 
in a proof of Manin's conjecture for equivariant compactifications of 
$G\backslash G^n$, where $G$ is an absolutely simple linear algebraic group, 
acting diagonally on $G^n$.

\

In Section~\ref{sect:general} we recall basic properties of the invariants $a(X,L)$ and $b(X,L)$.
After discussing balanced line bundles 
in Section~\ref{sect:balanced}, we turn to del Pezzo surfaces in Section~\ref{sect:pezzo}. 
In Section~\ref{sect:equi} we study the geometry of equivariant compactifications of 
homogeneous spaces and prove Theorem~\ref{thm:main}. 
In Section~\ref{sect:toric} we investigate balanced line bundles on toric varieties, in the context of the 
Minimal Model Program.

\

\noindent
{\bf Acknowledgments.} 
The authors would like to thank Brian Lehmann
for answering our questions and making useful comments.
The second author also would like to thank Fedor Bogomolov and Ilya Karzhemanov
for useful discussions.
The first author was supported by National Science Foundation Grant 0901645, 0968349, and 1148609
the third author was supported by National Science Foundation
Grants 0901777, 0968318, and 1160859.

\section{Generalities}
\label{sect:general}

\begin{defi}
\label{defi:cone}
Let $V$ be a finite dimensional vector space over $\bR$.
{\it A closed convex cone} $\Lambda \subset V$
is a closed subset which is closed under
linear combinations with non-negative real coefficients.
{\it An extremal face} $F \subset \Lambda$ is a closed convex subcone of $\Lambda$ such that if
$u, v \in \Lambda$ and $u+v \in F$ then $u, v \in F$.
{\it A supporting function} is a linear functional $\sigma : V \ra \bR$
such that $\sigma \geq 0$ on $\Lambda$.
A face of the form
$$
F' = \{ \sigma = 0\} \cap \Lambda
$$
is called {\it a supported face}.
A supported face is an extremal face,
but the converse is not true, in general.
The converse does hold when $\Lambda$ is locally finitely generated in a neighborhood of $F$,
i.e., there exist finitely many linear functionals
$$
\lambda_i: V \ra \bR
$$
such that $\lambda_i(v)> 0$ for $v \in F \setminus \{0\}$ and
$$
\Lambda \cap \{ v : \lambda_i(v) \geq0 \text{ for any $i$}\},
$$
is finitely generated.
Note that when $\Lambda$ is {\it strict}, i.e.,
does not contain a line, then $\{0\}$ is a supported face.
\end{defi}

We work over an algebraically closed field of characteristic zero.  
A variety is an integral separated scheme over this field.  
Let $X$ be a smooth projective variety.
We use 
$$\Lambda_{\eff}(X) \subset \NS(X,\bR) \subset \rH^2(X,\bR)$$ 
to denote the pseudo-effective cone, i.e., 
the closure of effective $\bQ$-divisors on $X$ in the real N\'eron-Severi
group $\NS(X,\bR)$.  Another
common notation in the literature is $\overline{\mathrm{NE}}^1(X)$.  
Note that the pseudo-effective cone is strictly convex \cite[Prop.~1.3]{BFJ}.
Let $\Lambda^{\circ}_{\eff}(X)$ denote the interior of the pseudo-effective cone;
a divisor class $D$ on $X$ is {\em big} if $[D] \in \Lambda^{\circ}_{\eff}(X)$.
We denote the dual cone of the cone of pseudo-effective divisors by $\overline{\text{NM}}_1(X)$.
This is the closure of the cone generated by movable curves (\cite{BDPP}.)

A {\em rigid effective} divisor is a reduced divisor $D\subset X$ 
such that
$$
\mathrm H^0(\cO_X(nD)) = 1\quad \forall n\ge 1. 
$$
If $D$ is rigid with irreducible components $D_1,\ldots,D_r$ then 
$$
\mathrm H^0(\cO_X(n_1D_1+\ldots+n_rD_r))=1 \quad \forall n_1,\ldots,n_r \ge 1
$$
and 
\begin{equation} \label{eqn:rigid}
\mathrm{span}(D_1,\ldots,D_r)\cap \Lambda^{\circ}_{\eff}(X)=\emptyset.
\end{equation}

\begin{defi} \label{defi:adef}
Assume that 
$L$ is a big line bundle on $X$.
The {\em Fujita invariant} is defined by
\begin{eqnarray*}
a(X,L) &=&  \inf \{a \in \bR: aL+K_X \text{ effective }\}  \\
       &=&  \min \{a \in \bR: a[L]+[K_X] \in \Lambda_{\eff}(X)\}.
\end{eqnarray*}
\end{defi}
Note that the Fujita invariant is positive if and only if $K_X$ is not 
pseudo-effective.  
The invariant 
$$
\kappa\epsilon(X,L)= -a(X,L)
$$ 
was introduced and studied by Fujita 
under the name {\em Kodaira energy} \cite{fujita-pure}, \cite{fujita-manu}, \cite{fujita-log}, \cite{fujita-three}. 
A similar invariant 
$$
\sigma(X,L) = \dim(X)+1-a(X,L)
$$ 
appeared in \cite{sommese} under the name {\em spectral value}. 

\begin{rema} \label{rema:uniruled} 
A smooth projective variety $X$ is uniruled if and only if
$K_X$ is not pseudo-effective \cite{BDPP}, \cite[Cor.~11.4.20]{LazII}.
\end{rema}

The following result was conjectured by Fujita
and proved by Batyrev for threefolds and \cite[Cor. 1.1.7]{BCHM} in general.
\begin{theo}
\label{theo:alpha_rational}
Let $X$ be projective with Kawamata log terminal singularities
such that $K_X$ is not pseudo-effective, 
and $L$ an ample line bundle on $X$.
Then $a(X,L)$ is rational.
\end{theo}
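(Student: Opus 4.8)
The plan is to realize $a(X,L)$ as the final scaling parameter of a minimal model program and then pin it down using the duality between pseudo-effective divisors and movable curves. By definition $a(X,L)$ is the pseudo-effective threshold $\inf\{t \in \bR : K_X + tL \in \Lambda_{\eff}(X)\}$, a positive real number since $K_X \notin \Lambda_{\eff}(X)$ and $L$ is ample; only its rationality is at issue. I would run the $K_X$-MMP with scaling of the ample divisor $L$, which is available for the klt pair $(X,0)$ by \cite{BCHM}. Because $K_X$ is not pseudo-effective this program cannot end in a minimal model, so by \cite{BCHM} it terminates after finitely many steps $X = X_0 \dashrightarrow \cdots \dashrightarrow X_m$ in a Mori fiber space $\pi : X_m \ra Z$, producing a nonincreasing sequence of scaling thresholds $\lambda_i = \inf\{t : K_{X_i} + tL_i \text{ nef}\}$ (with $L_i$ the strict transform of $L$), each rational by the rationality theorem governing the extremal contraction at that step. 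The goal is to prove $a(X,L) = \lambda_m$.

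Next I would verify that the pseudo-effective threshold is unchanged by each step, so that $a(X,L) = a(X_m, L_m)$. One inequality is immediate: the pushforward of a pseudo-effective class is pseudo-effective. For the reverse, a flip is an isomorphism in codimension one and so preserves the threshold, while for a divisorial contraction $\phi_i$ with exceptional divisor $E$ one has the relation
$$
K_{X_i} + tL_i = \phi_i^{*}(K_{X_{i+1}} + tL_{i+1}) + (a + tc)E,
$$
where intersecting with the contracted ray shows $a > 0$, $c < 0$, and $a + tc \ge 0$ precisely for $t \le \lambda_i$. Since the thresholds under consideration are at most $\lambda_i$, the class $(a+tc)E$ is effective there, and pseudo-effectivity of $K_{X_{i+1}} + tL_{i+1}$ pulls back to $K_{X_i}+tL_i$.

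It then remains to compute $a(X_m,L_m)$ on the Mori fiber space, where I would invoke the description of $\overline{\mathrm{NM}}_1(X_m)$ as the dual of $\Lambda_{\eff}(X_m)$ from \cite{BDPP}. Let $R$ be the extremal ray defining $\pi$; its members are movable curves, $K_{X_m}\cdot R < 0$, and $(K_{X_m}+\lambda_m L_m)\cdot R = 0$, forcing $L_m \cdot R > 0$. Hence $K_{X_m}+\lambda_m L_m$ is nef and a fortiori pseudo-effective, whereas for $t < \lambda_m$ the class $K_{X_m}+tL_m$ pairs negatively with the movable class $R$ and so lies outside $\Lambda_{\eff}(X_m)$. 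Thus $a(X_m,L_m) = \lambda_m$, and combining with the previous step yields $a(X,L) = \lambda_m \in \bQ$.

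The step I expect to require the most care is this preservation under divisorial contractions: one must compute the discrepancy coefficient $a$ and the coefficient $c$ of $E$, check the sign of $a + tc$ against the nef threshold $\lambda_i$, and confirm that the pseudo-effective thresholds indeed stay within the range $t \le \lambda_i$ where the argument is valid. The remaining ingredients are either immediate from the definitions or direct appeals to \cite{BCHM} (existence and termination of the MMP with scaling, and rationality of the $\lambda_i$) and to \cite{BDPP} (the duality between $\Lambda_{\eff}$ and movable curves).
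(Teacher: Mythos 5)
The paper does not actually prove Theorem~\ref{theo:alpha_rational}: it records the statement as a conjecture of Fujita and cites Batyrev for threefolds and \cite[Cor.~1.1.7]{BCHM} in general. Your argument is therefore not parallel to anything in the text; it is, however, a correct reconstruction of the standard proof of that BCHM corollary: run the $K_X$-MMP with scaling of $L$, which terminates in a Mori fiber space because $K_X$ is not pseudo-effective; check that the pseudo-effective threshold is preserved at each step (automatic for flips, and handled for divisorial contractions by your sign analysis of $a+tc$, which is correct since $a+\lambda_i c=0$ and $c<0$); and identify the threshold on the Mori fiber space with the rational nef threshold $\lambda_m$ via the contracted ray. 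Three small points. First, the statement only assumes $X$ is klt, while the MMP requires $\bQ$-factoriality; you should first pass to a small $\bQ$-factorialization (\cite[Cor.~1.4.3]{BCHM}), which is crepant and small and so changes neither $a(X,L)$ nor the non-pseudo-effectivity of $K_X$. Second, the appeal to the full BDPP duality is more than you need: to rule out pseudo-effectivity of $K_{X_m}+tL_m$ for $t<\lambda_m$ it suffices that a pseudo-effective class pairs nonnegatively with a curve moving in a family through the general point, which is the elementary direction, and the ray of a Mori fiber space is generated by such curves. Third, the worry you flag about staying in the range $t\le\lambda_i$ is harmless: for $t\ge\lambda_i$ one has $K_{X_i}+tL_i=(K_{X_i}+\lambda_iL_i)+(t-\lambda_i)L_i$, nef plus big, hence pseudo-effective, so only $t\le\lambda_i$ is relevant to locating the threshold. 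With these caveats the proposal is sound; what it buys over the paper's treatment is a self-contained argument that also exhibits $a(X,L)$ concretely as $-K_{X_m}\cdot\Gamma/(L_m\cdot\Gamma)$ for a rational curve $\Gamma$ in the final extremal ray.
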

However, this property can fail when $L$ is big but not ample,
and the following example going back to Cutkosky \cite{Cut86} was suggested to us 
by Brian Lehmann:
\begin{exam}\cite[Example 4.9]{L11}
\label{exam:nonrational}
Let $Y$ be an abelian surface with Picard rank at least 3.
The cone of nef divisors and the cone of pseudo-effective divisors coincide, 
and the boundary of these cones is circular.
Let $N$ be a line bundle on $Y$ such that $-N$ is ample and
$X := \bP(\mathcal O \oplus \mathcal O(N))$. 
Let $\pi :X\ra Y$  denote the projection morphism and
$S \subset X$ the section corresponding
to the quotient map $\mathcal O \oplus \mathcal O(N) \ra \mathcal O(N)$.
Every divisor on $X$ is linearly equivalent to $tS + \pi^*D$
where $D$ is a divisor on $Y$. In particular, $K_X$
is linearly equivalent to $-2S + \pi^*N$.
The cone of pseudo-effective divisors $\Lambda_{\eff}(X)$ is generated by
$S$ and $\pi^*\Lambda_{\eff}(Y)$.

Consider a big $\bQ$-divisor $L = tS + \pi^*D$,
where $t > 0$ and $D$ is a big $\bQ$-divisor on $Y$.
If $t$ is sufficiently large,
then $a(X, L)$ is defined by $a[D] + [N] \in \partial \Lambda_{\eff}(Y)$.
However, the boundary of $\Lambda_{\eff}(Y)$ is circular,
and $a(X,L)\notin \bQ$, in general.
\end{exam}

From the point of view of Manin's conjecture,
the global geometric invariants involved in its formulation should be functorial for birational
transformations, and indeed this holds for the Fujita invariant:

\begin{prop}
\label{prop:alpha}
Let $\beta:\tilde{X}\ra X$ be a birational morphism of projective varieties,
where $\tilde{X}$ is smooth and $X$ has canonical singularities.  Assume $K_{X}$ is
not pseudo-effective 
and $L$ is big.  Setting
$\tilde{L}=\beta^*L$, we have
$$a(X,L)=a(\tilde{X},\tilde{L}).$$
\end{prop}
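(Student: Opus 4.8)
The plan is to prove the two inequalities $a(\tilde X,\tilde L)\le a(X,L)$ and $a(X,L)\le a(\tilde X,\tilde L)$ separately, using the characterization of the Fujita invariant as the pseudo-effective threshold $a(X,L)=\min\{a: a[L]+[K_X]\in\Lambda_{\eff}(X)\}$ (and likewise on $\tilde X$). The single geometric input I would invoke is the discrepancy formula for canonical singularities: since $\tilde X$ is smooth and $X$ has canonical singularities, $K_X$ is $\bQ$-Cartier and one can write
$$K_{\tilde X}=\beta^*K_X+E,$$
where $E$ is an effective, $\beta$-exceptional $\bQ$-divisor, because all discrepancies of canonical singularities are $\ge 0$. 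The two inequalities then follow from transporting pseudo-effective classes between $X$ and $\tilde X$ along $\beta^*$ and $\beta_*$.

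For the inequality $a(\tilde X,\tilde L)\le a(X,L)$, set $a=a(X,L)$, so that $a[L]+[K_X]\in\Lambda_{\eff}(X)$. Since $\beta$ is birational, $\beta^*$ carries effective divisors to effective divisors and is linear, hence continuous, so it maps $\Lambda_{\eff}(X)$ into $\Lambda_{\eff}(\tilde X)$. Applying $\beta^*$ and adding the effective class $[E]$ gives
$$a[\tilde L]+[K_{\tilde X}]=\beta^*(a[L]+[K_X])+[E]\in\Lambda_{\eff}(\tilde X),$$
which forces $a(\tilde X,\tilde L)\le a$. Here $\tilde L=\beta^*L$ is big, since the pullback of a big class under a birational morphism is big, so $a(\tilde X,\tilde L)$ is well defined.

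For the reverse inequality, set $\tilde a=a(\tilde X,\tilde L)$, so $\tilde a[\tilde L]+[K_{\tilde X}]\in\Lambda_{\eff}(\tilde X)$. I would push forward by $\beta_*$, which sends effective divisors to effective divisors or to $0$ on the exceptional locus, and is again linear, hence maps $\Lambda_{\eff}(\tilde X)$ into $\Lambda_{\eff}(X)$. The projection formula gives $\beta_*\beta^*[L]=[L]$, and since $\beta_*\beta^*[K_X]=[K_X]$ while $\beta_*[E]=0$, the discrepancy formula yields $\beta_*[K_{\tilde X}]=[K_X]$. Therefore
$$\beta_*\!\left(\tilde a[\tilde L]+[K_{\tilde X}]\right)=\tilde a[L]+[K_X]\in\Lambda_{\eff}(X),$$
so $a(X,L)\le\tilde a$. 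Combining the two inequalities yields $a(X,L)=a(\tilde X,\tilde L)$.

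The routine parts are the linearity and effectivity-preservation of $\beta^*$ and $\beta_*$, together with the identities $\beta_*\beta^*=\mathrm{id}$ and $\beta_*[E]=0$. The only place where the hypotheses genuinely enter, and the main point to get right, is the discrepancy formula: canonical singularities are exactly what guarantee $E\ge 0$, which is what keeps the pullback step inside $\Lambda_{\eff}(\tilde X)$ rather than inside some shifted cone. If $X$ had worse than canonical singularities, $E$ could acquire negative coefficients and the first inequality would fail. So I expect the verification that $E$ is effective, along with the care needed to make sense of $a(X,L)$ via $K_X$ as a $\bQ$-Cartier class on the singular variety $X$, to be the crux rather than any hard estimate.
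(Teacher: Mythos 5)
Your argument is correct and rests on the same single geometric input as the paper's proof, namely the discrepancy formula $K_{\tilde X}=\beta^*K_X+E$ with $E\geq 0$ exceptional, which is exactly where canonical singularities enter. The mechanism differs, though: the paper never splits the statement into two inequalities at the level of cones, but instead proves the literal equality of section spaces
$\Gamma(\cO_X(mK_X+nL))=\Gamma(\cO_{\tilde X}(mK_{\tilde X}+n\tilde L))$, using that allowing poles along the exceptional locus (i.e.\ twisting by $-\sum\lfloor md_i\rfloor E_i$, or by $+NE$) does not change global sections on the normal variety $X$; the equality of the two Fujita invariants then falls out of Definition~\ref{defi:adef} in one stroke. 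Your cone-transport version buys a cleaner conceptual picture (pull back, add $E$; push forward, kill $E$), but it is slightly more delicate in one spot the paper's argument sidesteps: $X$ is only assumed to have canonical singularities, not to be $\bQ$-factorial, so $\beta_*$ of a general Cartier class on $\tilde X$ is a Weil class that need not live in $\NS(X,\bR)$, and "$\beta_*$ maps $\Lambda_{\eff}(\tilde X)$ into $\Lambda_{\eff}(X)$" is not literally well posed. This is harmless for your specific class, since $\beta_*\bigl(\tilde a[\tilde L]+[K_{\tilde X}]\bigr)=\tilde a[L]+[K_X]$ is $\bQ$-Cartier by construction; to make the step airtight, perturb to $(\tilde a+\epsilon)\tilde L+K_{\tilde X}$, which is big, take an effective representative, and push forward that actual divisor (or simply quote the section-space identity above). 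With that patch your proof is complete, and you correctly identified the crux.
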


\begin{proof}
Since $X$ has canonical singularities, we have
$$K_{\tilde{X}}=\beta^*K_X+\sum_i d_i E_i,$$
where the $E_i$ are the irreducible exceptional divisors and the 
$d_i$ are nonnegative rational numbers.
It follows that for integers $m,n\ge 0$ we have
\begin{eqnarray*}
\Gamma(\cO_X(mK_X+nL))&=&
\Gamma(\cO_{\tilde{X}}(mK_{\tilde{X}}-\sum_i \lfloor md_i \rfloor E_i+n\tilde{L}))\\
  &=&
\Gamma(\cO_{\tilde{X}}(mK_{\tilde{X}}+n\tilde{L})), 
\end{eqnarray*}
where the second equality reflects the fact that allowing poles in the exceptional locus
does not increase the number of global sections.
In particular, effective divisors supported in the exceptional locus of $\beta$ are rigid.
It follows from the assumption that no multiple of $K_{\tilde{X}}$ is effective
and that $a(\tilde{X}, \tilde{L}) \geq 0$.
Definition~\ref{defi:adef} gives $a(\tilde{X}, \tilde{L}) = a(X, L) > 0$.
\end{proof}

\

Next, we discuss the second geometric invariant appearing
in Manin's conjecture.

\begin{defi}
\label{defi:b_defi}
Let $X$ be a projective variety with only $\bQ$-factorial terminal singularities
such that $K_X$ is not pseudo-effective.
Let $L$ be a big line bundle on $X$.
Define
$$\begin{array}{rcl}
b(X,L)&=&\text{the codimension of the minimal supported face of} \\
& & \Lambda_{\eff}(X) 
        \text{ containing the $\bR$-divisor $a(X,L)L + K_X$.}  
\end{array}
$$
\end{defi}

This definition is relatively easy to grasp when $\Lambda_{\eff}(X)$ is finitely
generated, which holds in a number of cases:
\begin{itemize}
\item{
A projective variety $X$ is {\em log Fano} if there exists an effective $\bQ$-divisor
$\Delta$ on $X$ such that $(X,\Delta)$ is divisorially log terminal (see \cite[p.~424]{BCHM})
and $-(K_X+\Delta)$ is ample.  
If $X$ is log Fano then the Cox ring of $X$ is finitely generated \cite[Cor.~1.3.2]{BCHM}, so in particular,
$\Lambda_{\eff}(X)$ is finite rational polyhedral and generated by effective divisors.  }
\item{Let $X$ be a smooth projective variety that is toric or an equivariant compactification
of the additive group $\bG_a^n$.  Then $\Lambda_{\eff}(X)$ is generated
by boundary divisors, i.e., irreducible components of the complement of the open 
orbit, \cite[Thm.~2.5]{HT}, \cite[Prop.1.2.11]{BT-IMRN}. }
\end{itemize}

Let $X$ be a smooth projective variety with $\Lambda_{\eff}(X)$ generated by a finite number
of effective divisors and 
$\Pic(X)_\bQ = \NS (X, \bQ)$.
Since each irreducible rigid effective divisor on $X$ is a generator of $\Lambda_{\eff}(X)$
(cf.~(\ref{eqn:rigid})), we have
$$
Z:=\cup_{\text{rigid effective}} D
$$
is a Zariski closed proper subset of $X$.

One of the reasons for adopting the terminology of supported faces in the definition of $b(X, L)$ is
to simplify the verification of its birational invariance:

\begin{prop}
\label{prop:Beta}
Let $X$ be a $\bQ$-factorial terminal projective variety
such that $K_X$ is not pseudo-effective and $\beta : \tilde{X} \ra X$ a smooth resolution.
Let $L$ be a big line bundle on $X$ and put $\tilde{L} = \beta^*L$. Then 
$$
b(X, L) = b(\tilde{X}, \tilde{L}).
$$
\end{prop}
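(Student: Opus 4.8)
The plan is to reduce everything to the linear-algebraic decomposition of N\'eron--Severi spaces induced by $\beta$ and to express the quantity $b$ through supporting functions, so that the possibly non-polyhedral nature of $\Lambda_{\eff}$ (cf.\ Example~\ref{exam:nonrational}) causes no trouble. Write $V=\NS(X,\bR)$ and $\tilde V=\NS(\tilde X,\bR)$. Because $X$ is $\bQ$-factorial, pushforward $\beta_*:\tilde V\to V$ is surjective with section $\beta^*$, so $\tilde V=\beta^*V\oplus\ker\beta_*$, where $\ker\beta_*$ is spanned by the exceptional classes $[E_i]$; in particular $\dim\ker\beta_*=\dim\tilde V-\dim V=:e$. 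Moreover $\beta^*$ carries $\Lambda_{\eff}(X)$ into $\Lambda_{\eff}(\tilde X)$ and $\beta_*$ carries $\Lambda_{\eff}(\tilde X)$ onto $\Lambda_{\eff}(X)$, since pullback and pushforward preserve (pseudo)effectivity. By Proposition~\ref{prop:alpha} we may set $a:=a(X,L)=a(\tilde X,\tilde L)$, and writing $v:=aL+K_X$ and $\tilde v:=a\tilde L+K_{\tilde X}$ we have, using terminality,
$$\tilde v=\beta^*v+\sum_i d_iE_i,\qquad d_i>0.$$
Let $F$ and $\tilde F$ be the minimal supported faces of $\Lambda_{\eff}(X)$ and $\Lambda_{\eff}(\tilde X)$ containing $v$ and $\tilde v$. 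Since both cones are full-dimensional, $b(X,L)=\dim V-\dim\mathrm{span}\,F$ and $b(\tilde X,\tilde L)=\dim\tilde V-\dim\mathrm{span}\,\tilde F$, so it suffices to prove $\dim\mathrm{span}\,\tilde F=\dim\mathrm{span}\,F+e$.

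The heart of the argument is to pin down $\tilde F$ via supporting functions, first establishing the inclusion $\beta^*F+\sum_i\bR_{\ge0}[E_i]\subseteq\tilde F$. Let $\tilde\tau$ be any supporting function of $\Lambda_{\eff}(\tilde X)$ with $\tilde\tau(\tilde v)=0$. Since $\beta^*v$ and the $E_i$ are pseudoeffective, the values $\tilde\tau(\beta^*v)$ and $\tilde\tau(E_i)$ are nonnegative, while $0=\tilde\tau(\tilde v)=\tilde\tau(\beta^*v)+\sum_i d_i\tilde\tau(E_i)$ with $d_i>0$; hence each term vanishes, giving $\tilde\tau(E_i)=0$. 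Next, $\sigma:=\tilde\tau\circ\beta^*$ is a supporting function of $\Lambda_{\eff}(X)$ (as $\beta^*$ preserves effectivity) with $\sigma(v)=\tilde\tau(\beta^*v)=0$, so $\sigma$ vanishes on the minimal supported face $F$; thus $\tilde\tau$ vanishes on $\beta^*F$. As $\tilde\tau$ was arbitrary among supporting functions killing $\tilde v$, and the displayed classes are pseudoeffective, every element of $\beta^*F+\sum_i\bR_{\ge0}[E_i]$ lies in $\tilde F$. Taking spans and using $\beta^*V\cap\ker\beta_*=0$ together with $\dim\ker\beta_*=e$ yields $\dim\mathrm{span}\,\tilde F\ge\dim\mathrm{span}\,F+e$.

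For the reverse inequality I would push supporting functions the other way. For each supporting function $\sigma$ of $\Lambda_{\eff}(X)$ with $\sigma(v)=0$, the composite $\sigma\circ\beta_*$ is a supporting function of $\Lambda_{\eff}(\tilde X)$ vanishing at $\tilde v$, because $\beta_*\tilde v=v$; hence $\tilde F\subseteq\{\tilde D:\sigma(\beta_*\tilde D)=0\}$. Intersecting over the family of all such $\sigma$, which cuts out $F$, shows $\beta_*(\tilde F)\subseteq F$, whence $\beta_*(\mathrm{span}\,\tilde F)\subseteq\mathrm{span}\,F$ and $\dim\mathrm{span}\,\tilde F\le\dim\mathrm{span}\,F+\dim\ker\beta_*=\dim\mathrm{span}\,F+e$. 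Combining the two inequalities gives the equality of dimensions, and therefore $b(\tilde X,\tilde L)=b(X,L)$.

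The step I expect to be the main obstacle is verifying the structural facts cleanly enough to apply them to the pseudoeffective cone rather than to a polyhedral approximation: that $\ker\beta_*$ is exactly the span of the exceptional classes (using $\bQ$-factoriality to make $\beta^*\beta_*\tilde D$ meaningful and $\beta$-exceptionality to land in $\sum_i\bR[E_i]$), and that $d_i>0$ for every exceptional divisor, which is precisely where the terminal (as opposed to merely canonical) hypothesis enters. The supporting-function formulation is what allows the proof to avoid any finiteness assumption on $\Lambda_{\eff}$, so the remaining care is simply to quantify over all supporting functions rather than over extremal rays alone.
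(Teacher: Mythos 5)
Your proof is correct and follows essentially the same route as the paper's: decompose $\NS(\tilde X)$ as $\beta^*\NS(X)\oplus\bigoplus_i\bR[E_i]$, use terminality to write $a\tilde L+K_{\tilde X}=\beta^*(aL+K_X)+\sum_i d_iE_i$ with $d_i>0$, and compare the minimal supported faces in both directions by transporting supporting functions via $\beta^*$ and $\beta_*$. The only cosmetic difference is that you work with abstract supporting functionals where the paper realizes them as nef curve classes in $\overline{\mathrm{NM}}_1$ (your $\sigma\circ\beta_*$ is exactly the paper's extension $\tilde\eta$ of $\eta$ by zero on the exceptional classes), so the two arguments coincide.
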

\begin{proof}
Let $F$ be the minimal supported face of $\Lambda_\eff(X)$
containing an $\bR$-divisor $a(X, L)L + K_X$
and $\tilde{F}$ be the minimal supported face of $\Lambda_\eff(\tilde{X})$
containing $a(\tilde{X}, \tilde{L})\tilde{L} + K_{\tilde{X}}$.
The vector spaces generated by $F$ and $\tilde{F}$ will be denoted 
by $V_F$ and $V_{\tilde{F}}$, respectively.
There exists a nef cycle $\xi \in \overline{\text{NM}}_1(\tilde{X})$
such that 
$$
\tilde{F} = \{ \xi = 0\} \cap \Lambda_{\eff}(\tilde{X}).
$$
Let $E_1, \ldots, E_n$ be irreducible components of the exceptional locus of $\beta$.
The Negativity Lemma (\cite[Lemma 3.6.2]{BCHM})
implies that $\NS (\tilde{X})$ is a direct sum of $\beta^*\NS (X)$ and $[E_i]$'s.
Since $X$ has only terminal singularities, it follows from Proposition \ref{prop:alpha} that
$$
a(\tilde{X}, \tilde{L})\tilde{L} + K_{\tilde{X}} = \beta^*(a(X, L)L + K_X) + \sum_i d_i E_i,
$$
where the $d_i$'s are positive rational numbers.
This implies that $\tilde{F}$ contains $\beta^*(a(X, L)L + K_X)$ and the $E_i$'s.
Thus $a(X, L)L + K_{X}$ is contained in an extremal face
supported by a supporting function $\beta_*\xi$, i.e., 
$$
\{ \beta_* \xi = 0\} \cap \Lambda_{\eff}(X),
$$
so this supported face also contains $F$.
We get a well-defined injection
$$
\Phi : V_F \hookrightarrow V_{\tilde{F}}/(\sum_i \bR E_i).
$$

On the other hand, let $\eta \in \overline{\text{NM}}_1(X)$ be a nef cycle supporting $F$.
Consider a linear functional 
$
\tilde{\eta} : \NS(\tilde{X}) \ra \bR
$
defined by 
$$
\text{$\tilde{\eta} \equiv \eta$ on $\beta^*\NS(X)$ and $\tilde{\eta} \cdot E_i = 0$ for any $i$.}
$$
The projection from $\NS (\tilde{X})$ to $\beta^*\NS (X)$
maps pseudo-effective divisors to pseudo-effective divisors
so that $\tilde{\eta} \in \overline{\text{NM}}_1(\tilde{X})$.
Moreover,
$$
\tilde{\eta} \cdot (a(\tilde{X}, \tilde{L})\tilde{L} + K_{\tilde{X}}) =
\tilde{\eta} \cdot (\beta^*(a(X, L)L + K_X) + \sum_i d_i E_i) = 0,
$$
so that $\{\tilde{\eta} = 0\} \cap \Lambda_\eff(\tilde{X})$ contains $\tilde{F}$.
It follows that $\Phi$ is bijective and 
our assertion is proved.
\end{proof}

\begin{defi}
Let $X$ be a uniruled projective variety with big line bundle $L$.
We define
$$a(X,L)=a(\tilde{X},\beta^*L), \quad b(X,L)=b(\tilde{X},\beta^*L)$$
where $\beta:\tilde{X} \ra X$ is some resolution of singularities.  
\end{defi}
Note that $K_{\tilde{X}}$ is not pseudo-effective by Remark~\ref{rema:uniruled};
Propositions~\ref{prop:alpha} and \ref{prop:Beta} guarantee the invariants
are independent of the choice of resolution.

\begin{exam}[The anticanonical line bundle]
\label{exam:biganti}
Let $X$ be a projective variety with only $\bQ$-factorial terminal singularities.
As in the smooth case, the cone $\Lambda_{\eff}(X)$ of pseudo-effective divisors is strict.
When the anticanonical class $-K_X$ is big, we have
$$
a(X, -K_X) = 1, \quad b(X, -K_X) = \rk \, \NS(X).
$$
Let $\beta:\tilde{X} \ra X$ be a smooth resolution
and $E_1, \ldots, E_n$ the irreducible components of the exceptional locus;
we have
$$
K_{\tilde{X}} = \beta^*K_X + \sum_i d_i E_i,
$$
where $d_i\in \bQ_{>0}$, for all $i$. 
Hence the minimal extremal face containing $-\beta^*K_X + K_{\tilde{X}}$
contains a simplicial cone $F : = \oplus_i \bR_{\geq 0}[E_i]$.
The fact that $F$ is a supported face follows from 
\cite[Theorem 3.19]{B04}, which asserts that the pseudo-effective cone is
locally polyhedral in this region, generated by the prime exceptional divisors.
Hence we may compute
$$
b(\tilde{X}, -\beta^*K_X) = \rk \, \NS(\tilde{X}, \bR) - n = \rk \, \NS(X, \bR) = b(X, -K_X).
$$
\end{exam}

\begin{rema}
There exist projective bundles over curves of arbitrary genus $g>0$ with big anticanonical divisor
\cite[3.13]{KMM}.  When $g>1$ these cannot have potentially dense
rational points.  
\end{rema}

\begin{exam}
\label{exam:numdimzero}
Example~\ref{exam:biganti} can be generalized as follows:
Let $X$ be a smooth projective variety and $D = \sum_i e_iE_i$ an effective $\bR$-divisor
whose numerical dimension $\nu (D)$ is zero (see \cite[Theorem 1.1]{L11B} for definitions).
The minimal extremal face containing $D$
contains $F = \oplus_i \bR_{\geq 0}[E_i]$,
and we claim that $F$ is a supported face of $\Lambda_\eff(X)$.
First we prove that $F$ is an extremal face.
Let $u, v \in \Lambda_\eff(X)$ such that $u+v \in F$.
For any pseudo-effective numerical class $\alpha$, we denote
the negative part and the positive part 
of the divisorial Zariski decomposition of $\alpha$ by $N_\sigma(\alpha)$
and $P_\sigma(\alpha) = \alpha - [N_\sigma(\alpha)] \in \Lambda_\eff(X)$ respectively
(see \cite[Section 3]{L11B}). The assumption 
$\nu(D) = 0$ implies that 
$$
u+v \equiv N_\sigma(u+v) \leq N_\sigma(u) + N_\sigma(v).
$$
This implies that $N_\sigma(u) \equiv u$ and $N_\sigma(v) \equiv v$
so that $u, v \in F$.
Again by \cite[Theorem 3.19]{B04},
the cone $\Lambda_\eff(X)$ is locally rational polyhedral
in a neighborhood of $F$. Hence $F$ is a supported face.
\end{exam}

Little is known about the geometric meaning of the invariant $b(X, L)$, in general. 
Here we consider situations relevant for our applications to
equivariant compactifications of homogeneous spaces.

\begin{defi}
\label{defi:poly}
Let $X$ be a $\bQ$-factorial terminal and projective variety
and $D$ an $\bR$-divisor in the boundary of $\Lambda_{\eff}(X)$.  
We say $D$ is {\it locally rational polyhedral}
if there exist finitely many linear functionals
$$
\lambda_i:\NS(X,\bQ) \ra \bQ
$$
such that $\lambda_i(D)> 0$ and
$$
\Lambda_{\eff}(X) \cap \{ v : \lambda_i(v) \geq 0 \text{ for any $i$}\},
$$
is finite rational polyhedral and generated by effective $\bQ$-divisors.
In this case, the minimal extremal face $F$ containing $D$
is supported by a supporting function.
\end{defi}

\begin{theo}
\label{theo:adjoint}
Let $X$ be a $\bQ$-factorial terminal projective variety
such that $K_X$ is not pseudo-effective and
$L$ a big line bundle on $X$.
Suppose that $a(X, L)L +K_X$ has the form
$c(A + K_X + \Delta)$, where $A$ is an ample $\bR$-divisor,
$(X, \Delta)$ a Kawamata log terminal pair, and $c>0$.
Then $a(X, L)L +K_X$ is locally rational polyhedral and $a(X, L)$ is rational.
\end{theo}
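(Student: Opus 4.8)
The assertion has two parts; the first, local rational polyhedrality, is the substantial one, and the rationality of $a(X,L)$ will follow from it by a short linear-algebra argument. Throughout write $G:=A+K_X+\Delta$, so that $D:=a(X,L)L+K_X=cG$ with $c>0$. Since $K_X$ is not pseudo-effective we have $a(X,L)>0$ (Definition~\ref{defi:adef}), and by minimality of the Fujita invariant $D$ lies on the boundary $\partial\Lambda_{\eff}(X)$. We may assume $D\neq0$: the case $D=0$ means $a(X,L)L=-K_X$, which already exhibits $a(X,L)$ as the ratio of two proportional rational classes. Thus $G$ is a nonzero pseudo-effective class on $\partial\Lambda_{\eff}(X)$, presented as a klt-adjoint class with an ample summand. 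Since only the numerical class matters, I first replace the ample $\bR$-divisor $A$ by a general effective $A_0\sim_{\bR}A$ with $(X,\Delta+A_0)$ still klt; this leaves $[G]$ unchanged.

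The main step is to show that $\Lambda_{\eff}(X)$ is locally rational polyhedral at $D$ in the sense of Definition~\ref{defi:poly}; as this concerns only the ray $\bR_{\ge0}D=\bR_{\ge0}G$, I work near $G$. The plan is to import the finiteness of minimal models from the Minimal Model Program. Choose a finite-dimensional subspace $V\subset\mathrm{Div}_{\bR}(X)$ containing the components of $\Delta$ and $A_0$ together with enough effective divisors that the classes $[K_X+A_0+B]$, for $B\ge0$ in $V$ near $\Delta$, sweep out a neighborhood of $G$ inside $\Lambda_{\eff}(X)$. By the finiteness-of-models theory for klt pairs with big boundary \cite{BCHM}, the set of such $B$ for which $(X,A_0+B)$ is klt and $K_X+A_0+B$ is pseudo-effective forms a rational polytope, admitting a finite rational polyhedral decomposition into chambers on which the ample model of $K_X+A_0+B$ is constant. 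Pushing this structure forward along the affine-rational map $B\mapsto[K_X+A_0+B]$ yields finitely many rational functionals $\lambda_i$ on $\NS(X,\bQ)$ with $\lambda_i(G)>0$ for which $\Lambda_{\eff}(X)\cap\{\lambda_i\ge0\ \forall i\}$ is finite rational polyhedral and generated by effective $\bQ$-divisors — precisely Definition~\ref{defi:poly}. In particular the minimal supported face $F$ of $\Lambda_{\eff}(X)$ containing $D$ is cut out by a supporting function $\sigma$, which can be taken \emph{rational} since the local cone is rational polyhedral.

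Given this, rationality is immediate. Since $D\in\partial\Lambda_{\eff}(X)$ the face $F$ is proper and $\sigma\not\equiv0$; a nonzero functional that is nonnegative on the full-dimensional cone $\Lambda_{\eff}(X)$ is strictly positive on its interior $\Lambda^{\circ}_{\eff}(X)$, and as $L$ is big we get $\sigma(L)>0$. Evaluating $\sigma(D)=0$ gives
$$a(X,L)\,\sigma(L)+\sigma(K_X)=0,\qquad\text{hence}\qquad a(X,L)=-\frac{\sigma(K_X)}{\sigma(L)}.$$
Here $\sigma$, $[L]$ and $[K_X]$ are all rational and $\sigma(L)\neq0$, so $a(X,L)\in\bQ$.

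The crux, and the only place where serious input is needed, is the local polyhedrality in the second paragraph: the entire argument rests on extracting a \emph{rational polyhedral} description of $\Lambda_{\eff}(X)$ at the boundary class $G=K_X+\Delta+A$, which I know only through the deep finiteness results of \cite{BCHM}. The ample summand $A$ is indispensable — it is what realizes the nearby pseudo-effective classes as klt-adjoint divisors with big boundary, making the MMP applicable — and the delicate point is to choose $V$ (and the representative $A_0$) so that the resulting rational polytope governs a genuine full-dimensional neighborhood of the ray through $D$, rather than only a lower-dimensional slice transverse to it.
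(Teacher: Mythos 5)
Your argument is correct and follows essentially the same route as the paper: the paper's proof simply cites Lehmann \cite[Proposition 3.3]{L11}, which establishes the local rational polyhedrality of $\partial\Lambda_{\eff}(X)$ near a klt-adjoint class with ample part exactly via the finiteness of ample models \cite[Corollary 1.1.5]{BCHM} and the non-vanishing theorem, i.e., the chamber-decomposition argument you sketch. Your explicit derivation of the rationality of $a(X,L)$ from a rational supporting functional is the intended (and correct) conclusion, left implicit in the paper.
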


\begin{proof}
The local finiteness of the pseudo-effective boundary
is proved in \cite[Proposition 3.3]{L11}
by using the finiteness of the ample models
\cite[Corollary 1.1.5]{BCHM}.
Moreover, Lehmann proved that
the pseudo-effective boundary is locally defined by movable curves.
The generation by effective $\bQ$-divisors follows from 
the non-vanishing theorem \cite[Theorem D]{BCHM}.
\end{proof}

In particular,
if $L$ is ample then
$a(X, L)L+K_X$ is locally rational polyhedral.
We have already seen in Example \ref{exam:nonrational} that 
the local finiteness is no longer true
if we only assume that $L$ is big.
However, there are certain cases
where the local finiteness of $a(X, L)L + K_X$ still holds for any big line bundle $L$:

\begin{exam}[Surfaces]
\label{exam:surface}
Let $X$ be a smooth projective surface
such that $K_X$ is not pseudo-effective.
Let $L$ be a big line bundle on $X$.
Suppose that $D = a(X, L)L+K_X$ is a non-zero pseudo-effective divisor.
Then $D$ is locally rational polyhedral.
We consider the Zariski decomposition of $D = P + N$,
where $P$ is a nef $\bR$-divisor and $N$ is the negative part of $D$.
The boundary of $\Lambda_{\text{eff}}(X)$ is locally rational
polyhedral away from the nef cone (see \cite[Theorem 3.19]{B04} and \cite[Theorem 4.1]{B04}).
Thus, if $N$ is non-zero, then our assertion follows.
Suppose that $N$ is zero. Since $a(X, L)L \cdot P + K_X \cdot P = D\cdot P = 0$
and $L\cdot P > 0$, we have $K_X \cdot P < 0$.
Thus our assertion follows from Mori's cone theorem.
In particular, $a(X, L)$ is a rational number.
\end{exam}

\begin{exam}[Equivariant compactifications of the additive groups]\
\label{exam:additivegroup}
Let $X$ be a smooth projective equivariant compactification
of the additive group $\mathbb G_a^n$.
Then $\Lambda_{\text{eff}}(X)$ is a simplicial cone
generated by boundary components, by 
\cite[Theorem 2.5]{HT}.
However, this cannot be explained from
Theorem \ref{theo:adjoint}.
Indeed, consider the standard embedding of $\mathbb G_a^3$ into $\bP^3$:
$$
\mathbb G_a^3 \ni (x, y, z) \mapsto (x:y:z:1) \in \bP^3.
$$
This is an equivariant compactification,
and the group action fixes every point on the boundary divisor $D$, a hyperplane section.
Let $X$ be an equivariant blow up of 12 generic points on a smooth cubic curve
in  $D$.
Write $H$ for the pullback of the hyperplane class and $E_1, \ldots, E_{12}$
for the exceptional divisors.
Consider 
$$
L = 4H -E_1 - \cdots -E_{12}.
$$
Then $L$ is big and nef, but not semi-ample
(see \cite[Section 2.3.A]{LazI} for more details).
In particular, the section ring of $L$ is not finitely generated.
On the other hand, consider
$$
\Lambda_{\text{adj}}(X) = \{ \Gamma \in \Lambda_{\text{eff}}(X) \mid \Gamma = c(A + K_X + \Delta) \},
$$
where $A$ is an ample $\bR$-divisor, $(X, \Delta)$ a Kawamata log terminal pair,
and $c$ a positive number.
Then $\Lambda_{\text{adj}}(X)$ forms a convex cone.
The existence of non-finitely generated divisors and 
\cite[Corollary 1.1.9]{BCHM} imply that $\Lambda_{\text{adj}}(X) \subsetneqq 
\Lambda_{\text{eff}}(X)$.
\end{exam}

It is natural to expect that 
the invariant $b(X, L)$ is related to the canonical fibration
associated to $a(X, L)L+K_X$.
A sample result in this direction is:

\begin{prop}
\label{prop:fibration}
Let $X$ be a smooth projective variety such that $K_X$ is not pseudo-effective.
Let $L$ be a big line bundle
and assume that 
$
D = a(X, L)L + K_{X}
$
is locally rational polyhedral and semi-ample. 
Let $\pi : X \ra Y$ be the semi-ample fibration of $D$. Then
$$
b(X, L) = \rk \,\NS(X) - \rk \,\NS_{\pi}(X),
$$
where $\NS_{\pi}(X)$ is the lattice generated by $\pi$-vertical divisors, i.e., 
divisors $M\subset X$ such that $\pi(M)\subsetneq Y$
\end{prop}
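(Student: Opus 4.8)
The plan is to identify the linear span $V_F$ of the minimal supported face $F$ of $\Lambda_{\eff}(X)$ containing $D := a(X,L)L + K_X$ with the real span $\NS_\pi(X)_{\bR}$ of the $\pi$-vertical divisor classes. Since $b(X,L) = \operatorname{codim} F = \rk\NS(X) - \dim V_F$ by Definition~\ref{defi:b_defi}, establishing $V_F = \NS_\pi(X)_{\bR}$ yields the stated formula at once.

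I first record the setup. The semi-ample fibration $\pi\colon X\ra Y$ of $D$ has connected fibers and satisfies $D\equiv\pi^*A$ for an ample $\bR$-divisor $A$ on $Y$. Since $D$ lies on $\partial\Lambda_{\eff}(X)$ by minimality of $a(X,L)$, it is not big, so $\dim Y<\dim X$ and the general fiber $X_y$ has positive dimension; the degenerate case $D=0$, where $Y$ is a point and there are no $\pi$-vertical divisors, is immediate. By local rational polyhedrality, $F$ coincides with the minimal extremal face containing $D$ (Definition~\ref{defi:poly}), is rational polyhedral, is generated by effective $\bQ$-divisors, and contains $D$ in its relative interior.

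Next I construct a supporting class. Fix $G$ very ample on $X$ and let $\eta\in\overline{\text{NM}}_1(X)$ be the class of a general complete-intersection curve $C_y=(G|_{X_y})^{\dim X-\dim Y-1}$ in a general fiber $X_y$. These curves sweep out $X$, so $\eta$ is movable and pairs non-negatively with $\Lambda_{\eff}(X)$; moreover $\pi_*\eta=0$, whence $D\cdot\eta=A\cdot\pi_*\eta=0$. For a prime divisor $E$ one checks that $E\cdot\eta=0$ when $E$ is $\pi$-vertical (then $C_y\cap E=\emptyset$ for general $y$) and $E\cdot\eta>0$ when $E$ dominates $Y$ (then $E\cap X_y$ is a nonzero effective divisor on $X_y$ met positively by $C_y$). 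As $F\subseteq\{\eta=0\}\cap\Lambda_{\eff}(X)$, every effective generator of $F$ has each prime component $\pi$-vertical, giving $V_F\subseteq\NS_\pi(X)_{\bR}$.

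For the reverse inclusion I use that, since $\Lambda_{\eff}(X)$ is polyhedral near $D$ and $D\in\operatorname{relint}(F)$, a class $w$ lies in $V_F$ precisely when $D+\epsilon w$ and $D-\epsilon w$ are both pseudo-effective for small $\epsilon>0$. Let $M$ be a $\pi$-vertical prime divisor with image $Z=\pi(M)\subsetneq Y$. Then $D+\epsilon M$ is effective. For $D-\epsilon M$, choose an effective $\bR$-divisor $B\equiv A$ on $Y$ whose support contains $Z$ (possible since $Z\subsetneq Y$ and $A$ is ample); then $\pi^*B\equiv D$ is effective and contains $M$ as a component, because $M\subseteq\pi^{-1}(Z)\subseteq\operatorname{Supp}(\pi^*B)$ is a prime divisor inside the divisor $\pi^*B$. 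Hence $cM\le\pi^*B$ for some $c>0$, so $D-cM$ is pseudo-effective, and by convexity $D-\epsilon M\in\Lambda_{\eff}(X)$ for small $\epsilon$. Thus $[M]\in V_F$, so $\NS_\pi(X)_{\bR}\subseteq V_F$, and $V_F=\NS_\pi(X)_{\bR}$. The main obstacle is exactly this last inclusion: one must absorb each vertical prime into $\pi^*A$, which hinges on realizing $M$ as a component of the pullback of an effective divisor on $Y$ through $\pi(M)$.
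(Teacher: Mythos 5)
Your proof is correct and follows essentially the same route as the paper: both identify the span $V_F$ of the minimal supported face with $\NS_\pi(X)$, proving $\NS_\pi(X)\subseteq V_F$ by pulling back an effective divisor on $Y$ linearly equivalent to a multiple of the ample class whose support contains $\pi(M)$, and proving $V_F\subseteq \NS_\pi(X)$ by pairing the effective generators of $F$ (supplied by local rational polyhedrality) with a movable curve class supported in a general fiber. The only differences are cosmetic: the paper concludes $[M]\in F$ directly from extremality applied to the components of $\pi^*H'$, where you route the same extremality through the ``$D\pm\epsilon M$ pseudo-effective'' criterion, and you make the movable curve explicit as a complete intersection in the fiber.
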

\begin{proof}
Let $F$ be the minimal extremal face of $\Lambda_{\text{eff}}(X)$
containing $D = a(X, L)L+K_X$
and $V_F$ the vector space generated by $F$.
We claim that $V_F = \NS_\pi(X)$.
Let $H$ be an ample $\bQ$-divisor on $Y$
such that $\pi^*H = D$.
Let $M$ be a $\pi$-vertical divisor on $X$.
Then for sufficiently large $m$,
there exists an effective Cartier divisor $H'$
such that $mH \sim H'$
and the support of $H'$ contains $\pi(M)$.
Thus $mD = m \pi^*H \sim \pi^*H' \in F$
and the support of $\pi^*H'$ contains $M$.
We conclude that $M \in F$,
and this proves that $\NS_\pi(X) \subset V_F$.
Next, let $X_y$ be a general fiber of $\pi$
and $C \subset X_y$ a movable curve on $X$
such that $[C]$ is in the interior of $\overline{\text{NM}}_1(X_y)$.
Then 
$$
F_C = \{ [C] = 0\} \cap \Lambda_{\text{eff}}(X),
$$
is an extremal face containing $D$.
The minimality implies $F \subset F_C$.
On the other hand, the local rational finiteness of $D$
implies that there exist effective $\bQ$-divisors $D_1, \ldots, D_n \in F$
such that $D_1, \ldots, D_n$ form a basis of $V_F$.
Since $D_1\cdot C = \cdots = D_n\cdot C = 0$,
the supports of $D_i$'s are $\pi$-vertical.
Hence it follows that $V_F \subset \NS_\pi(X)$.
\end{proof}

\begin{rema}
\label{rema:relative}
When $L$ is ample, it follows from
\cite[Lemma 3.2.5]{KMM87} that the codimension of the minimal extremal face
of nef cone containing $D$ is equal to the relative Picard rank $\rho(X/Y)$.
\end{rema}

\noindent
In Section \ref{sect:toric}, we explore this further in the case of toric varieties.

\section{Balanced line bundles}
\label{sect:balanced}

\begin{defi}
\label{defn:balanced}
Let $X$ be a uniruled projective variety, $L$
a big line bundle on $X$, and $Y\subsetneq X$ an irreducible
uniruled subvariety.  $L$ is
{\em weakly balanced with respect to $Y$} if
\begin{itemize}
\item{$L|_Y$ is big;}
\item{$a(Y,L|_Y) \le a(X,L)$;}
\item{if $a(Y,L|_Y)=a(X,L)$ then $b(Y,L|_Y) \le  b(X,L)$.}
\end{itemize}
It is {\em balanced with respect to $Y$} 
if it is weakly balanced and one of the two inequalities is strict.

$L$ is {\em weakly balanced} (resp.~{\em balanced}) {\em on $X$} if there exists a Zariski closed subset 
$Z\subsetneq X$ such that $L$ is weakly balanced (resp.~balanced) with respect to every $Y$ 
not contained in $Z$. The subset $Z$ will be called {\em exceptional}.  
\end{defi}

\begin{rema}
The restriction to uniruled subvarieties is quite natural:  If $Y$
is a smooth projective variety that is {\em not} uniruled and
$Y\ra X$ is a morphism such that $L|_Y$ is big, then 
$a(Y,L|_Y) \le 0 < a(X,L)$
(see Remark~\ref{rema:uniruled}).  
\end{rema}

We first explore these properties for projective homogeneous spaces:

\begin{prop}
\label{prop:gp}
Let $G$ be a connected semi-simple algebraic group, $P\subset G$ a parabolic subgroup and 
$X=P\backslash G$ the associated generalized flag variety. 
Let $L$ be a big line bundle on $X$. We have:
\begin{itemize}
\item{if $L$ is not proportional to $-K_X$ then $L$ is weakly balanced
but not balanced;}
\item{if $L$ is proportional to $-K_X$ then $L$ is balanced with respect to 
any smooth subvariety  $Y \subset X$.}
\end{itemize}
\end{prop}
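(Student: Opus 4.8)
The plan is to reduce everything to the (very rigid) combinatorics of $\Lambda_{\eff}(X)$. For $X=P\backslash G$ one has $\Lambda_{\eff}(X)=\mathrm{Nef}(X)$, a \emph{simplicial} cone: there is a basis $L_1,\dots,L_\rho$ of $\NS(X,\bR)$, $\rho=\rk\,\NS(X)$, of globally generated line bundles (the fundamental line bundles attached to the simple roots outside the Levi of $P$), with dual curve classes $C_1,\dots,C_\rho$ ($L_i\cdot C_j=\delta_{ij}$) generating the likewise simplicial Mori cone. Writing $-K_X=\sum_i k_iL_i$ and $L=\sum_i\ell_iL_i$, ampleness of $-K_X$ gives $k_i>0$ and bigness of $L$ gives $\ell_i>0$; a direct computation from Definitions~\ref{defi:adef} and~\ref{defi:b_defi} yields
$$a(X,L)=\max_i\frac{k_i}{\ell_i},\qquad b(X,L)=|S|,\quad S:=\Big\{\,i:\tfrac{k_i}{\ell_i}=a(X,L)\,\Big\}.$$
Since $\Lambda_{\eff}(X)=\mathrm{Nef}(X)$, big equals ample on $X$, so $L$ and every restriction $L|_Y$ is ample; the first condition in Definition~\ref{defn:balanced} is automatic, and Theorem~\ref{theo:adjoint}, Proposition~\ref{prop:fibration} and Remark~\ref{rema:relative} apply on each $Y$, giving $b(Y,L|_Y)=\rho(Y/W_Y)$ (relative Picard number) for the semi-ample fibration $\pi_Y\colon Y\to W_Y$ cut out by $a(Y,L|_Y)L|_Y+K_Y$. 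Finally, $L$ is proportional to $-K_X$ exactly when $S=\{1,\dots,\rho\}$, which separates the two cases.

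Next I would establish the basic inequality $a(Y,L|_Y)\le a(X,L)$ for every uniruled $Y$. Take a resolution $\nu\colon\tilde Y\to Y$, write $f\colon\tilde Y\to X$, and choose a movable class $\gamma\in\overline{\text{NM}}_1(\tilde Y)$ computing $a(Y,L|_Y)=(-K_{\tilde Y}\cdot\gamma)/(f^*L\cdot\gamma)$, general enough to avoid any fixed proper closed subset. As $X$ is homogeneous, $T_X$ and hence $f^*T_X$ are globally generated, and the generically injective $df\colon T_{\tilde Y}\hookrightarrow f^*T_X$ gives, on such $\gamma$,
$$-K_{\tilde Y}\cdot\gamma=\det T_{\tilde Y}\cdot\gamma\le\det f^*T_X\cdot\gamma=f^*(-K_X)\cdot\gamma.$$
Pushing $\gamma$ to $X$ writes its class as $\sum_jc_jC_j$ with $c_j\ge0$, whence
$$a(Y,L|_Y)\le\frac{-K_X\cdot\gamma}{L\cdot\gamma}=\frac{\sum_jk_jc_j}{\sum_j\ell_jc_j}\le\max_j\frac{k_j}{\ell_j}=a(X,L),$$
a weighted mediant bounded by the maximal ratio. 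Equality forces both steps to be equalities: the mediant is maximal only if $c_j=0$ for $j\notin S$, so $\gamma$ is contracted by the extremal contraction $\pi\colon X=P\backslash G\to Q\backslash G$ of the face $\langle C_i:i\in S\rangle$ (for a suitable $Q\supseteq P$), whose general fibre $F\cong P\backslash Q$ is again a flag variety of rank $|S|$.

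For Case (1), assume $L$ is \emph{not} proportional to $-K_X$, so $S$ is a proper nonempty subset and $\pi$ is a nontrivial fibration. On a general fibre $F$ the normal bundle is trivial, so $-K_F=-K_X|_F$, while the $L_i$ with $i\notin S$ (being pulled back from $Q\backslash G$) restrict to $0$; thus $-K_F=\sum_{i\in S}k_iL_i|_F=a(X,L)\sum_{i\in S}\ell_iL_i|_F=a(X,L)\,L|_F$. By Example~\ref{exam:biganti} this gives $a(F,L|_F)=a(X,L)$ and $b(F,L|_F)=|S|=b(X,L)$, so \emph{neither} inequality is strict; since these fibres cover $X$ they lie in no proper closed subset, and $L$ is \emph{not balanced}. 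For weak balancedness the $a$‑inequality already gives the second bullet's first half, and it remains to check $b(Y,L|_Y)\le b(X,L)$ whenever $a(Y,L|_Y)=a(X,L)=:a_0$. Here I would use that $a_0L+K_X=\sum_{i\notin S}(a_0\ell_i-k_i)L_i$ is pulled back from $Q\backslash G$, so $a_0L|_Y+K_Y=(\pi|_Y)^*(\,\cdot\,)+\det N_{Y/X}$ splits into a base part and the globally generated class $\det N_{Y/X}$; combined with ampleness of $L|_Y$ and the identification $b(Y,L|_Y)=\rho(Y/W_Y)$, this should bound the relative Picard number by $|S|$ off a suitable exceptional set (singular loci and non‑dominant components).

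For Case (2), $L$ proportional to $-K_X$, I may take $L=-K_X$, so $a(X,L)=1$, $b(X,L)=\rho$, and $S=\{1,\dots,\rho\}$. The inequality above gives $a(Y,-K_X|_Y)\le1$; by Definition~\ref{defn:balanced} I must show one inequality is strict for every smooth proper $Y$. If $a(Y)<1$ we are done, so suppose $a(Y)=1$. Then the adjoint class is $a(Y)(-K_X|_Y)+K_Y=\det N_{Y/X}$, globally generated (a quotient of $T_X|_Y$) hence nef and semi-ample, with fibration $\pi_Y\colon Y\to W_Y$, and $b(Y,-K_X|_Y)=\rho(Y/W_Y)$. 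The remaining point — and the main obstacle of the whole proof — is to show $\rho(Y/W_Y)<\rho$, i.e.\ that passing to a proper smooth subvariety strictly decreases the number of independent boundary directions realized at the (anti)canonical class. I expect to prove this, and the analogous $b$‑bound of Case (1), by combining the global generation of $\det N_{Y/X}$ (so that $W_Y$ is governed by the trivial directions of the normal bundle) with the rigidity of flag varieties — injectivity of $\NS(X)\to\NS(Y)$ onto $\pi_Y$‑ample classes — to bound the relative Picard number, equality being excluded unless $Y=X$. Thus the $a$‑inequality is the easy, uniform part, and controlling $b(Y,L|_Y)$ in the equality regime is where the real work lies.
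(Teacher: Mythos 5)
Your reduction of the cone combinatorics is correct ($\Lambda_{\eff}(X)=\mathrm{Nef}(X)$ is simplicial, $a(X,L)=\max_i k_i/\ell_i$, $b(X,L)=|S|$), and your Case (1) argument for non-balancedness via the contraction $X\to Q\backslash G$ with fibers $P\backslash Q$ is essentially the paper's, which phrases it as the semi-ample fibration of $a(X,L)L+K_X$ and computes $b(W,L|_W)=\rho(X/X')=b(X,L)$ from the exact sequence $0\to\Pic(P'\backslash G)_\bQ\to\Pic(P\backslash G)_\bQ\to\Pic(P\backslash P')_\bQ\to 0$. Your mediant inequality giving $a(Y,L|_Y)\le a(X,L)$ uniformly in $Y$ and $L$ is a reasonable packaging of the paper's observation that $-K_X|_Y+K_Y=\det\cN_{Y/X}$ is globally generated, although the step $\det T_{\tilde Y}\cdot\gamma\le\det f^*T_X\cdot\gamma$ for a merely generically injective $df$ needs justification beyond ``choose $\gamma$ general'': a numerical movable class is not a curve that can be moved off a prescribed closed set, and the cokernel of $df$ is only a sheaf.

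The genuine gap is exactly where you write ``I expect to prove this'': the strict inequality $b(Y,-K_X|_Y)<b(X,-K_X)$ when $a(Y,-K_X|_Y)=1$. Neither ``rigidity of flag varieties'' nor injectivity of $\NS(X)\to\NS(Y)$ yields this; nothing in your sketch rules out a proper smooth $Y$ with $\det\cN_{Y/X}$ trivial and $\rk\,\NS(Y)=\rk\,\NS(X)$. The paper closes this by a deformation-theoretic argument: if $\det\cN_{Y/X}$ is trivial then global generation forces $\cN_{Y/X}\cong\cO_Y^{\oplus r}$; since $-K_Y=-K_X|_Y$ is then ample, $Y$ is Fano, $\rH^1(Y,\cN_{Y/X})=0$, so $\mathrm{Hilb}(X)$ is smooth of dimension $r$ at $[Y]$ and the orbit map $G\to\mathrm{Hilb}(X)$, $g\mapsto[Y\cdot g]$, is smooth with open and closed image $P'\backslash G$ where $P'=\mathrm{Stab}(Y)\supset P$. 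Hence $Y=P\backslash P'$ is a fiber of the projection $X\to P'\backslash G$, and the exact sequence above gives $b(Y,-K_Y)=\rk\,\Pic(P\backslash P')=\rk\,\Pic(X)-\rk\,\Pic(P'\backslash G)<\rk\,\Pic(X)$. The general case ($\det\cN_{Y/X}$ globally generated but nontrivial) is reduced to this by fibering $Y\to W$ via $\det\cN_{Y/X}$, identifying the generic fiber $Y_w$ (which has trivial normal bundle in $X$) as a fiber of some $\rho:X\to B$ with $Y$ the pullback of $W\subset B$, and using injectivity of $\NS(Y)/\NS_\rho(Y)\to\NS(Y_w)$ to get $b(Y,-K_X|_Y)\le b(Y_w,-K_X|_{Y_w})<b(X,-K_X)$. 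Without some version of this identification of $Y$ (or of $Y_w$) as a fiber of a $G$-equivariant fibration, the key inequality is unproven; the same mechanism is also what your sketched $b$-bound for weak balancedness in Case (1) would require.
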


\begin{proof}
For generalized flag varieties, the nef cone and the pseudo-effective cone coincide
so that $L$ is ample.
Moreover, the nef cone
of a flag variety is finitely generated by semi-ample line bundles.
Also note that since every rationally connected smooth proper variety is simply connected,
all parabolic subgroups are connected.

Assume that $L$ is not proportional to the anticanonical bundle,
i.e., $D = a(X, L)L + K_X$ is a non-zero effective $\bQ$-divisor.
Let $\pi : X \ra X'$ be the semi-ample fibration of $D$.
Then $X'$ is also a $G$-variety
so that there exists a parabolic subgroup $P' \supset P$
such that $X' = P'\backslash G$ and $\pi$ is the natural projection map.
We have the following exact sequence:
$$
0 \ra \Pic  (P'\backslash G)_\bQ \ra\Pic (P\backslash G)_\bQ
\ra \Pic (P\backslash P')_\bQ \ra 0.
$$
Indeed, the surjectivity follows from \cite[Proposition 3.2(i)]{KKV}.
Then the exactness of other parts follows from \cite[Lemma 3.2.5]{KMM87}.
Let $W$ be a fiber of $\pi$.
Then $a(X, L) = a(W, L|_W)$ since $K_X|_W = K_W$.
The exact sequence and Remark \ref{rema:relative}
imply that 
$$
b(X, L) = \rho (X/X') = \rk \, \Pic (W) = b(W, L|_W).
$$
Thus $L$ is not balanced with respect to any fiber of $\pi$.

Let $L= -K_X$ and $Y \subset X$ a smooth subvariety.
Let $\mathfrak g$ be the Lie algebra of $G$.
For any $\partial \in \mathfrak g$, we can construct 
a global vector field $\partial^X$ on $X$ such that
for any open set $U \subset X$ and any $f \in \mathcal O_X(U)$,
$$
\partial^X(f) (x) = \partial_g f(x\cdot g)|_{g=1}.
$$
It follows that the normal bundle $\mathcal N_{Y/X}$ is globally generated.
We conclude that
$$
-K_X|_Y +K_Y = \det (\mathcal N_{Y/X}) \in \Lambda_{\eff}(Y),
$$
so that $a(Y, -K_X|_Y) \leq a(X, -K_X) = 1$.

Suppose that $a(Y, -K_X|_Y) = a(X, -K_X) = 1$.
Our goal is to prove that 
$$
b(Y, -K_X|_Y) < b(X, -K_X) = \rk \, \NS (X).
$$

First we assume that $\det (\mathcal N_{Y/X})$ is trivial
so that $\mathcal N_{Y/X}$ is the trivial vector bundle of rank $r = \mathrm{codim}(Y, X)$.
The above construction of vector fields defines a surjective map:
$$
\varphi : \mathfrak g \ra \rH^0 (Y, \mathcal N_{Y/X}).
$$
We may assume that $e = P \in Y$ so that the Lie algebra $\mathfrak p$ of $P$
is contained in the kernel of $\varphi$.
Consider the Hilbert scheme $\mathrm{Hilb}(X)$ and 
note that $\rH^0 (Y, \mathcal N_{Y/X})$ is naturally isomorphic to
the Zariski tangent space of $\mathrm{Hilb} (X)$ at $[Y]$.
Consider the morphism:
$$
\pi : G \ni g \mapsto [Y\cdot g] \in \mathrm{Hilb}(X).
$$
Since $Y$ is Fano, $\mathrm{H}^1(Y, \mathcal N_{Y/X}) = 0$,  $\mathrm{Hilb}(X)$ is smooth at $[Y]$, and
$$
\dim_{[Y]} (\mathrm{Hilb}(X)) = r.
$$
Moreover, since $\varphi$ is surjective, $\pi$ is a smooth morphism
and $\pi (G)$ is a smooth open subscheme in $\mathrm{Hilb}(X)$.
Let $\mathcal H$ be the connected component of $\mathrm{Hilb}(X)$ containing $[Y]$ and
$P' = \mathrm{Stab}(Y)$.
Since the kernel of $\varphi$ contains $\mathfrak p$,
we have $P \subset P'$.
This implies that $\pi(G) = P'\backslash G$ is open and closed
so that 
$$
\mathcal H = \pi(G) = P' \backslash G.
$$
In particular, $\dim(G) - \dim(P') = r$,
so the kernel of $\varphi$ is exactly equal to the Lie algebra $\mathfrak p'$ of $P'$.
Consider the universal family $\mathcal U \subset X \times \mathcal H$ on $\mathcal H$.
It follows that $G$ acts on $\mathcal U$ transitively,
and we conclude that $\mathcal U = P \backslash G$ and $Y = P \backslash P'$.
Our assertion follows from the exact sequence which we discussed before.

In the general case, we still know that $\mathcal N_{Y/X}$ and its
determinant are globally generated.
Consider the resulting fibration $Y\ra W$ with generic fiber $Y_w$,
which is smooth.  Note that $\mathcal N_{Y_w/X}$ is trivial, as 
$\det(\mathcal N_{Y/X})|_{Y_w}$ and $\mathcal N_{Y_w/Y}$ are both trivial.
The above construction shows 
that $Y_w$ is the fiber of a fibration $\rho : X \ra B$ with $W\subset B$;
$Y$ is the pullback of $W$.
Theorem \ref{theo:adjoint} and Proposition \ref{prop:fibration} imply that
$$
b(Y, -K_X|_Y) = \rk \, \NS (Y) - \rk \, \NS_\rho (Y).
$$
The restriction map
$$
\Phi : \NS(Y) / \NS_\rho (Y) \ra \NS (Y_w),
$$
is injective;
this follows from \cite[Lemma 3.2.5]{KMM87}.
Hence 
$$
b(Y, -K_X|_Y) \leq b(Y_w, -K_X|_{Y_w}) < b(X, -K_X).
$$
\end{proof}

The following proposition is straightforward:

\begin{prop}
\label{prop:divisor}
Let $X$ be a smooth Fano variety of Picard rank one and $Y\subset X$ an irreducible smooth effective divisor. 
Then $-K_X$ is balanced with respect to $Y$. 
\end{prop}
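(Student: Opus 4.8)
The plan is to reduce the whole statement to a one-parameter numerical computation, exploiting that $\Pic(X)_\bQ$ is a line. First I would fix the ample generator $H$ of $\Pic(X)_\bQ$ and write $-K_X \equiv rH$, where $r>0$ is the Fano index, and $Y \equiv dH$ with $d\ge 1$ an integer (we have $d\ge 1$ because $Y$ is a nonzero effective divisor). Taking $L=-K_X$, Example~\ref{exam:biganti} gives $a(X,-K_X)=1$ and $b(X,-K_X)=\rk\,\NS(X)=1$. The restriction $L|_Y = r\,H|_Y$ is the restriction of an ample class, hence ample on $Y$ and in particular big; this disposes of the first bullet in Definition~\ref{defn:balanced}.

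The heart of the argument is to compute $a(Y,L|_Y)$ by adjunction. Since $Y$ is smooth, $K_Y \equiv (K_X+Y)|_Y \equiv (d-r)\,H|_Y$, so for every $a\in\bR$ the adjoint class is
$$
a\,L|_Y + K_Y \equiv (ar + d - r)\,H|_Y,
$$
which always lies on the single ray spanned by $H|_Y$. Because $H|_Y$ is ample, a class $c\,H|_Y$ is pseudo-effective exactly when $c\ge 0$: positive multiples of an ample class are effective, while $-H|_Y$ is anti-ample, hence not in $\Lambda_{\eff}(Y)$. I would use this to read off
$$
a(Y,L|_Y) = 1 - \frac{d}{r} < 1 = a(X,-K_X),
$$
the strict inequality being forced by $d\ge 1$.

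With the strict inequality $a(Y,L|_Y) < a(X,L)$ established, the proposition follows directly from Definition~\ref{defn:balanced}: the remaining condition for weak balancedness (the non-strict inequality on $a$ holds, and the conditional inequality on $b$ is vacuous once $a$ is strictly smaller), and the first inequality being strict is precisely what "balanced with respect to $Y$" demands. In particular the invariant $b$ never enters the argument, which is why no analysis of faces of $\Lambda_{\eff}(Y)$ is needed.

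The only point I expect to require care is bookkeeping around the uniruledness hypothesis built into Definition~\ref{defn:balanced}, rather than any genuine geometric difficulty. The same computation shows $K_Y \equiv (d-r)H|_Y$ fails to be pseudo-effective exactly when $d<r$, which by Remark~\ref{rema:uniruled} is precisely the uniruled range, and this is the regime in which the balanced condition is asserted; when $d\ge r$ the divisor $Y$ is not uniruled and lies outside the scope of the definition (where in any case the inequality $a(Y,L|_Y)\le 0 < a(X,L)$ of that Remark continues to hold). Once adjunction and the rank-one structure are in place, the estimate itself is elementary.
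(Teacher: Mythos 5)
Your proof is correct and follows essentially the same route as the paper: the key point in both is adjunction, $-K_X|_Y + K_Y = Y|_Y$, combined with the fact that Picard rank one forces the effective divisor $Y$ to be ample, so the adjoint class at $a=1$ is ample (big) and hence $a(Y,-K_X|_Y)<1=a(X,-K_X)$ strictly. Your explicit bookkeeping with the index $r$ and degree $d$, and the remark on the uniruled range $d<r$, are just a more quantitative phrasing of the same one-line argument.
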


\begin{proof}
For smooth divisors $Y\subset X$ the claim follows from adjunction formula: 
$$
-K_X|_Y + K_Y \in \Lambda_{\eff}(Y)^\circ,
$$
because $Y$ is an ample divisor on $X$.
Thus we obtain
$$
a(Y,-K_X|_Y) < a(X,-K_X) = 1.
$$
\end{proof}

However, this may fail when $Y$ is singular:

\begin{exam}[Mukai-Umemura 3-folds, \cite{MU83}]
\label{exam:MU}
Consider the standard action of $\mathrm{SL}_2$ on $V = \bC x \oplus \bC y$.
Let $R_{12} = \mathrm{Sym}^{12}(V)$ be a space of homogeneous polynomials of degree 12 in two variables
and $f \in R$ a general form.
Let $X$ be the Zariski closure of the $\mathrm{SL}_2$-orbit $\mathrm{SL}_2 \cdot [f] \subset \bP(R_{12})$.
Then $X$ is a smooth Fano 3-fold of index 1 with $\Pic (X) = \bZ$, for general $f$.
The complement of the open orbit $\mathrm{SL}_2 \cdot [f]$ is an irreducible divisor
$$
D= \overline{\mathrm{SL}_2 \cdot [x^{11}y]} = \mathrm{SL}_2 \cdot [x^{11}y] \cup \mathrm{SL}_2 \cdot [x^{12}],
$$
a hyperplane section on $\bP(R_{12})$ whose class generates $\Pic (X)$.
Furthermore, $D$ is the image of $\bP^1 \times \bP^1$ by a linear series of bidegree $(11, 1)$,
which is injective, an open immersion outside of the diagonal, but not along the diagonal.
In particular, $D$ is singular along the diagonal.

Let $\beta : \tilde{D} \ra D$ be the normalization of $D$ which is isomorphic to $\bP^1 \times \bP^1$.
Then $-\beta^*K_X|_{\tilde{D}}$ is a line bundle of bidegree $(11, 1)$
so that 
$$
a(D, -K_X|_D) = a(\tilde{D}, -\beta^*K_X|_{\tilde{D}}) = 2 > 1= a(X, -K_X).
$$
Thus Proposition \ref{prop:divisor} does not hold for $D$.
\end{exam}

\begin{rema}
\label{rema:a_surface}
We do not know whether Proposition \ref{prop:divisor} holds for
singular surfaces $Y$ in $\bP^3$.
\end{rema}

Some of simplest examples
of Fano threefolds fail to be balanced:

\begin{exam}
\label{exam:three}
Let $X\subset \bP^4$ be a smooth cubic threefold,
which is Fano of index 2.
The Picard group of $X$ is generated by the hyperplane class $L$.
By Proposition~\ref{prop:divisor}, $-K_X$ is balanced with 
respect to every smooth divisor on $X$.
Let $Y\subset X$ be a line. Note that $2L$ 
restricts to the anticanonical class on $X$ and $Y$,
and $b(Y,L|_Y)=b(X,L)=1$.
Thus $-K_X$ is weakly balanced, but not balanced, with respect to $Y$. 
Since the family of lines dominates $X$, $-K_X$ is not balanced on $X$.  

However, assume that $X$ is defined over a number field. 
The family of lines dominating $X$ are surfaces of general type, which 
embeded into their Albanese varieties. By Faltings' theorem, lines defined over a fixed
number field lie on a proper subvariety and cannot dominate $X$. 
\end{exam}

\begin{exam}
\label{exam:twobytwo}
Let $X \subset \bP^5$ denote a smooth complete intersection of two quadrics.
The anticanonical class $-K_X=2L$ where $L$ is the hyperplane class,
which generates the Picard group.
The variety $A$ parametrizing lines $Y \subset X$ is an abelian surface \cite[p.~779]{GH}.
Four lines pass through a generic point $x\in X$ \cite[p.~781]{GH}, so
these lines dominate $X$.  We have $a(X,L)=a(Y,L|Y)=2$ and
$b(X,L)=b(Y,L|Y)=1$ so $X$ is not balanced.    

Suppose that $X$ is defined over a number field $F$ with $X(F)$ Zariski dense;
fix a metrization $\cL$ of $L$.
Manin's formalism predicts the existence of an open set $X^{\circ} \subset X$
such that
$$\# \{x\in X^{\circ}(F): \sH_{\cL}(x) \le \sB \} \sim c \sB^2.$$
However, each line $Y \subset X$ defined over $F$ contributes
$$\# \{x\in Y(F): \sH_{\cL}(x) \le \sB \} \sim c'(Y,\cL) \sB^2.$$
Moreover, after replacing $F$ by a suitable finite extension
these lines are Zariski dense in $X$, because
rational points on abelian surfaces are potentially dense (see \cite[\S 3]{HT00}, for instance). 
\end{exam}

For Fano varieties of index one,
one might hope to use the Fujita invariant $a(X,L)$ to identify the exceptional locus 
$X \setminus X^\circ$.
However, this is quite non-trivial even in the following situation, 
considered in \cite{Debarre} (see also  \cite{LT10} and \cite{Be06}):

\begin{conj}[Debarre - de Jong conjecture]
\label{conj:dejong}
Let $X \subset \bP^n$ be a Fano hypersurface of degree $d \leq n$.
Then the dimension of the variety of lines is $2n - d -3$.
In particular, when $d=n$, for any line $C$, we have
$$
a(C, -K_X|_C) = 2 > 1 = a(X, -K_X).
$$
The conjecture predicts that the dimension of the variety of lines is $n-3$
so that lines will not sweep out $X$.
\end{conj}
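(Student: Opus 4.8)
The displayed inequality is an unconditional adjunction computation, whereas the dimension assertion is the genuine open conjecture; I treat them separately. For the ``in particular'' claim, adjunction gives $K_X=\cO_X(d-n-1)$, hence $-K_X=\cO_X(n+1-d)$ and $a(X,-K_X)=1$ by Example~\ref{exam:biganti}. For any line $C\cong\bP^1\subset X$ we have $-K_X|_C=\cO_{\bP^1}(n+1-d)$ (a line has degree one) and $K_C=\cO_{\bP^1}(-2)$, so the divisor $t(-K_X|_C)+K_C$ has degree $t(n+1-d)-2$ and is effective precisely when $t\ge 2/(n+1-d)$. Thus $a(C,-K_X|_C)=2/(n+1-d)$, which equals $2>1$ when $d=n$. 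This holds for every line on $X$, independent of the conjecture, and records why the locus swept out by lines obstructs balancedness whenever it is not a proper closed subset.

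For the dimension statement itself I would work with the Fano scheme $F(X)\subset \bG(1,n)$ of lines, where $\dim\bG(1,n)=2n-2$. Writing $X=\{f=0\}$ with $f$ of degree $d$ and $\cS$ for the rank-two tautological subbundle, $F(X)$ is the zero locus of the section of $\mathrm{Sym}^d\cS^\vee$ induced by $f$; this bundle has rank $d+1$. Hence every irreducible component of $F(X)$ has dimension at least $(2n-2)-(d+1)=2n-d-3$, which is the easy half. The content of the conjecture is the reverse inequality $\dim F(X)\le 2n-d-3$, i.e.\ that no component is excessive.

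The plan is to reduce this upper bound to a cohomological vanishing via deformation theory: the Zariski tangent space to $F(X)$ at $[C]$ is $\rH^0(C,N_{C/X})$, and $F(X)$ is smooth of the expected dimension there as soon as $\rH^1(C,N_{C/X})=0$. From the normal bundle sequence $0\to N_{C/X}\to N_{C/\bP^n}\to \cO_C(d)\to 0$ with $N_{C/\bP^n}=\cO_{\bP^1}(1)^{\oplus(n-1)}$, one finds $\deg N_{C/X}=n-1-d$ and $\chi(N_{C/X})=2n-d-3$, matching the expected dimension. Since $N_{C/X}$ splits as $\bigoplus_{i=1}^{n-2}\cO_{\bP^1}(a_i)$, the vanishing $\rH^1=0$ holds exactly when every $a_i\ge -1$, and fails only for \emph{unbalanced} splitting types with some $a_i\le -2$.

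The hard part is ruling out such unbalanced normal bundles for \emph{all} smooth $X$ with $d\le n$. I would stratify the incidence variety $\{(C,X):C\subset X\}$ by the splitting type of $N_{C/X}$ and bound the dimension of each stratum, aiming to show that the locus of pairs with some $a_i\le -2$ does not dominate the space of smooth hypersurfaces, and more strongly that no smooth $X$ of degree $d\le n$ carries such a line. This is precisely where the obstacle lies: the expected codimension of the bad strata degrades as $d$ approaches $n$, so controlling it uniformly in the full range is the crux, and it is false for $d>n$. Accordingly I expect only partial progress by this route—the vanishing is known for $d$ small relative to $n$ (Harris--Mazur--Pandharipande) and for low degree (Beheshti), while the conjecture in the stated range $d\le n$ remains open; cf.\ \cite{Debarre}, \cite{LT10}, \cite{Be06}.
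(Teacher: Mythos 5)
The item you were asked to prove is labelled a \emph{Conjecture} in the paper, and the paper supplies no proof of it --- the Debarre--de Jong conjecture is open in the stated range, so there is no ``paper's own proof'' to compare against. Your decision to split the statement into its unconditional part and its conjectural part is exactly right, and the unconditional part checks out: adjunction gives $-K_X=\cO_X(n+1-d)$, hence $a(X,-K_X)=1$, and for a line $C$ one gets $a(C,-K_X|_C)=2/(n+1-d)$, which is $2$ when $d=n$; this is all the paper is implicitly asserting in the ``in particular'' clause (which, as you note, is independent of the dimension statement rather than a consequence of it). Your computation of the expected dimension $2n-d-3$ via the section of $\mathrm{Sym}^d\cS^\vee$ on $\bG(1,n)$, the lower bound on every component, and the Euler characteristic $\chi(N_{C/X})=2n-d-3$ are all correct, and your identification of the crux --- ruling out lines with normal bundle summands of degree $\le -2$ on every smooth $X$ with $d\le n$ --- accurately locates why the problem is hard and why the known results (small $d$ relative to $n$, or low degree) fall short of the full range. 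The only caution is presentational: what you have written is a correct proof of the Fujita-invariant computation together with an honest research plan for the open conjecture, not a proof of the conjecture, and it should be framed as such; the paper itself draws only the qualitative consequence that, \emph{if} the conjecture holds, the locus swept by lines is a proper closed subset for $d=n$, which is the point relevant to balancedness.
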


The weakly balanced property may fail too:

\begin{exam}
\cite{BT-cubic}
Let $f,g$ be general cubic forms on $\bP^3$ and 
$$
X:=\{ sf+tg = 0\} \subset \bP^1\times \bP^3,
$$
the Fano threefold obtained by blowing up the base locus of the pencil. 
The projection onto the first factor
exhibits a cubic surface fibration
$$
\pi: X\ra \bP^1,  
$$
so that $-K_X$ restricts to $-K_Y$, for every smooth fiber $Y$ of $\pi$.
Thus 
$$
a(Y,-K_Y) = a(X,-K_X)=1.
$$ 
Furthermore, the N\'eron-Severi rank of a smooth fiber of $\pi$ is 7. 
On the other hand, by the Lefschetz theorem, we have $\rk\, \NS(X) = 2$ and
$$
7=b(Y,-K_Y) >b(X,-K_X)=2, 
$$
i.e., $-K_X$ is not weakly balanced on $X$. 

Let $Z$ be the union of singular fibers of $\pi$, $-K_Y$-lines in general smooth fibers $Y$,
 and the exceptional locus of the blow up to $\bP^3$. Note that
$-K_X$ is balanced with respect to every rational curve on $X$ which is not contained in $Z$.   
\end{exam}

\section{del Pezzo surfaces}
\label{sect:pezzo}

Let $X$ be a smooth projective surface with ample $-K_X$, i.e., a del Pezzo surface. 
These are classified by the degree of the canonical class $d:=(K_X,K_X)$.
Basic examples are $\bP^2$ and $\bP^1\times \bP^1$; more examples are obtained
by blowing up $9-d$ general points on $\bP^2$.  
We have
\begin{itemize}
\item $\rk\,\NS(X) = 10-d$;
\item for $1\le d\le 7$ the cone $\Lambda_{\rm eff}(X)$ is 
generated by classes of exceptional curves, i.e., smooth rational 
curves of self-intersection $-1$. 
\end{itemize} 

Let $L$ be a big line bundle on $X$. When is it balanced? 
The only subvarieties of $X$ on which we need to test the values of $a$ and $b$ 
are rational curves $C\subset X$, and $b(C,L|_C)=1$.

It is easy to characterize curves breaking the balanced condition for the Fujita invariant.
Let $Z$ be the union of exceptional curves, if $d > 1$. When $d=1$, let
$Z$ be the union of exceptional curves
plus singular rational curves in $|-K_X|$.

\begin{lemm}
\label{lemm:balance-a-surf}
Let $X$ be a del Pezzo surface of degree $d$, 
$C$ an irreducible rational curve with $(C,C)\neq -1$, and  
$L$ a big line bundle on $X$.
Then
\begin{equation}
a(C,L|_C) \le a(X,L),
\end{equation}
i.e., $L$ is weakly balanced on $X$ outside of $Z$.
\end{lemm}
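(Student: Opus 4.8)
The plan is to reduce the whole statement to the single numerical inequality $a(X,L)\,(L\cdot C)\ge 2$. Since $C$ is an irreducible rational curve, its normalization is $\bP^1$, and by the definition of the Fujita invariant for uniruled varieties, $a(C,L|_C)$ is computed there: the pullback of $L|_C$ has degree $(L\cdot C)$ while $K_{\bP^1}$ has degree $-2$, so Definition~\ref{defi:adef} gives $a(C,L|_C)=2/(L\cdot C)$. Here $(L\cdot C)>0$ because $[L]\in\Lambda^{\circ}_{\eff}(X)$ and a big class pairs strictly positively with every nonzero class of $\overline{\text{NM}}_1(X)$; I will verify just below that $C$ is nef and hence lies in this cone. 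Thus the asserted inequality $a(C,L|_C)\le a(X,L)$ is precisely $a(X,L)\,(L\cdot C)\ge 2$.

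Next I would establish the bound $a(X,L)\,(L\cdot C)\ge (-K_X\cdot C)$. Because $C$ is an irreducible curve on the del Pezzo surface $X$ with $(C,C)\neq -1$, adjunction together with the ampleness of $-K_X$ (which forces $(K_X\cdot C)\le -1$) gives $(C,C)=2p_a(C)-2-(K_X\cdot C)\ge 2p_a(C)-1\ge -1$, with equality to $-1$ only for smooth rational curves of anticanonical degree one; hence $(C,C)\ge 0$. An irreducible curve of nonnegative self-intersection on a surface is nef, so $C$ is nef. Writing $a=a(X,L)$, the class $aL+K_X$ is pseudo-effective by Definition~\ref{defi:adef}, so pairing it with the nef class $C$ yields $(aL+K_X)\cdot C\ge 0$, that is, $a\,(L\cdot C)\ge (-K_X\cdot C)$.

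It then remains to show $(-K_X\cdot C)\ge 2$ for every such $C$ not contained in $Z$, and this is the step I expect to be the crux. Since $-K_X$ is ample, $(-K_X\cdot C)$ is a positive integer, so I must exclude $(-K_X\cdot C)=1$. Applying the Hodge index theorem to the ample class $-K_X$ gives $(-K_X\cdot C)^2\ge (C,C)\,(K_X,K_X)=(C,C)\,d$; if $(-K_X\cdot C)=1$ this forces $(C,C)\,d\le 1$. The case $(C,C)=0$ is ruled out by adjunction, which would give $p_a(C)=\tfrac12$, so $(C,C)\ge 1$, whence $d=1$ and $(C,C)=1$. In this borderline situation the equality case of Hodge index shows $C\equiv -K_X$, so $C\in|-K_X|$ is an anticanonical curve of arithmetic genus one; being rational it must be singular, and therefore $C\subset Z$ by the definition of $Z$ in degree one. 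Consequently, for $C\not\subset Z$ we have $(-K_X\cdot C)\ge 2$, and combining with the previous paragraph we obtain $a(X,L)\,(L\cdot C)\ge (-K_X\cdot C)\ge 2$, which is the desired inequality. The main difficulty is exactly this last numerical analysis: it is what pins down the exceptional set $Z$ and explains why the singular anticanonical curves must be excluded precisely in degree one.
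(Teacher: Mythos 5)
Your proof is correct and follows essentially the same route as the paper's: adjunction plus the Hodge index theorem to show that any irreducible rational curve with $(-K_X\cdot C)=1$ and $(C,C)\neq -1$ is a singular anticanonical curve on a degree one surface (hence lies in $Z$), and then intersecting the pseudo-effective class $a(X,L)L+K_X$ with $C$ to get $a(X,L)(L\cdot C)\ge(-K_X\cdot C)\ge 2$, which yields $a(C,L|_C)=2/(L\cdot C)\le a(X,L)$. Your version is if anything slightly more explicit than the paper's, in that you verify $(C,C)\ge 0$ and hence the nefness of $C$ before pairing it with $a(X,L)L+K_X$.
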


\begin{proof}
If $C$ is an irreducible rational curve with $(-K_X,C)=1$ then $C\subseteq Z$. 
Indeed, if $(C,C)<0$, then $(C,C)=-1$, by adjunction, and $C$ is exceptional. 
On the other hand, if $C$ and $-K_X$ are linearly independent,
the Hodge index theorem implies that
$d(C,C)-1<0$, i.e., $(C,C)=-1$ or $0$. The second case is impossible since $(K_X,C)+(C,C)$ must be even. 
If $C$ and $-K_X$ are linearly dependent, then $d(C, C) -1 = 0$
so that $d = 1$ and $C$ is a singular rational curve in $|-K_X|$.
 
Let $C\subset X$ be a rational curve which is not in $Z$. 
After rescaling, we may assume that $a(X, L) = 1$, in particular, we do not assume 
that $L$ is an integral divisor. 
Writing $L+K_X=D$, where $D$ is an effective $\bQ$-divisor, and computing 
the intersection with $C$ we obtain
$$
(L,C) = (-K_X,C) + (D,C) \ge (-K_X,C).
$$
Since $C$ is not in $Z$,  $(-K_X,C)\ge 2$, i.e., $(L,C)\ge 2$. It follows that 
$$
(L,C) + \deg(K_{\tilde{C}}) = (L,C) -2 \ge 0,
$$  
where $\tilde{C}$ is the normalization of $C$, i.e., $a(C,L|_C)\le 1$, as claimed.  
\end{proof}

We proceed with a characterization of $b(X,L)$. 
Consider the Zariski decomposition
$$
a(X,L)L+K_X = P + E,
$$
where $P$ is a nef $\bQ$-divisor and $E=\sum_{i=1}^n e_iE_i$, $e_i \in \bQ_{>0}$, $(E_i,E_j)<0$. 
We have $(P,E)=0$. 
By basepoint freeness (see \cite[Theorem 3.3]{KM98}), 
$P$ is semi-ample and defines a semi-ample fibration
$$
\pi:X\ra B.
$$
We have two cases:

\

{\em Case 1.} $B$ is a point. 
Then 
$$
a(X,L)L+K_X= \sum_{i=1}^n e_iE_i
$$
is rigid, which implies that the classes $E_i$ are linearly independent in $\NS(X)$. In particular, 
$\oplus_i \bR_{\ge 0} E_i$ is an extremal face of $\Lambda_{\rm eff}(X)$, and in fact the minimal extremal face
containing $a(X,L)L+K_X$.
It follows that
$$
b(X,L) = \rk \, \NS(X) -n. 
$$

\

{\em Case 2.} $B$ is a smooth rational curve. 
Then the minimal extremal face containing 
$$
a(X,L)L+K_X= P + \sum_{i=1}^n e_iE_i
$$
is given by 
$$
\NS_{\pi}(X)\cap \Lambda_{\rm eff}(X) = \{ P = 0 \} \cap  \Lambda_{\rm eff}(X),
$$
where 
$\NS_{\pi}(X)\subset \NS(X)$ is the subspace 
generated by vertical divisors, i.e., divisors $D\subset X$ 
not dominating $B$. 
It follows that 
$$
b(X,L)= \rk \, \NS(X) - \rk\, \NS_{\pi}(X) = 1. 
$$

\begin{prop}
\label{prop:surfaces}
Let $X$ be a del Pezzo surface and $L$ a big line bundle on $X$. 
Then $L$ is balanced if and only if $a(X, L)L+K_X=D$, where $D$ is a rigid effective divisor. 
\end{prop}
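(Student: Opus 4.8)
The plan is to prove that $L$ is balanced exactly when we are in ``Case 1'' of the dichotomy set up above, i.e. when the semi-ample fibration attached to $D:=a(X,L)L+K_X$ is trivial. First I would record two reductions. Since $X$ is a surface, the only irreducible uniruled $Y\subsetneq X$ are rational curves $C$, and $b(C,L|_C)=1$ for every such $C$; so only the Fujita invariant and the value $b(X,L)$ matter. Next, $D$ is rigid effective if and only if the nef part $P$ in its Zariski decomposition $D=P+N$ vanishes: because $D\in\partial\Lambda_{\eff}(X)$ we have $P^2=0$, and a nonzero $P$ would be semi-ample and define a fibration, forcing $h^0(\cO_X(nD))\ge h^0(\cO_X(nP))\to\infty$. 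Thus \emph{rigid} $\Leftrightarrow$ Case 1, and the statement to prove becomes: $L$ is balanced if and only if we are in Case 1.

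For the ``only if'' direction I argue the contrapositive through Case 2. There $P\ne 0$, $P=\pi^*H$ for the semi-ample fibration $\pi:X\ra\bP^1$, and the relation $(P,N)=0$ forces every component of $N$ to be $\pi$-vertical. A general fibre $F$ is a smooth rational curve with $(-K_X,F)=2$ and $F^2=0$, and $(D,F)=(P,F)+(N,F)=0$. Hence $a(X,L)(L,F)=(-K_X,F)=2$, so $a(F,L|_F)=2/(L,F)=a(X,L)$, while $b(F,L|_F)=1=b(X,L)$. Thus $F$ is weakly balanced but not balanced, and since the fibres cover $X$ they cannot be contained in any proper closed $Z$; therefore $L$ is not balanced.

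For the ``if'' direction I take $Z$ to be the union of the exceptional curves (together with the singular members of $|-K_X|$ when $d=1$) and the support of $D$, and I test an arbitrary rational curve $C\not\subset Z$. Lemma~\ref{lemm:balance-a-surf} gives $a(C,L|_C)\le a(X,L)$; if the inequality is strict then $C$ is balanced. In the equality case the computation in that lemma forces $(-K_X,C)=2$ and $(D,C)=0$; writing $D=\sum_i e_iE_i$ with $e_i>0$ and noting that $C$ is distinct from each $E_i$ (so $(E_i,C)\ge 0$), we get $(E_i,C)=0$ for all $i$. If $b(X,L)=\rk\,\NS(X)-n\ge 2$ we are done at once, because then $b(C,L|_C)=1<b(X,L)$.

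The remaining case $b(X,L)=1$ is the main obstacle, and I would resolve it with the geometry of del Pezzo surfaces rather than by a deformation argument. Since $D$ is rigid its support is negative definite, so each $E_i$ is a negative curve; on a del Pezzo surface these are precisely $(-1)$-curves, and a negative definite configuration of $(-1)$-curves must be pairwise disjoint. When $b(X,L)=1$ we have $n=\rk\,\NS(X)-1=9-d$, so contracting the disjoint curves $E_i$ produces a smooth del Pezzo surface $X'$ of degree $9$ and Picard rank one, namely $X'=\bP^2$. A curve $C$ with $(E_i,C)=0$ for all $i$ descends to a curve $C'\subset\bP^2$, and since blowing down disjoint $(-1)$-curves preserves anticanonical degrees we obtain $3\deg(C')=(-K_{\bP^2},C')=(-K_X,C)=2$, which is impossible. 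Hence no rational curve off $Z$ attains $a(C,L|_C)=a(X,L)$; every such curve is strictly balanced, and $L$ is balanced. The single delicate point is this integrality obstruction on $\bP^2$; everything else is extracted from the Zariski decomposition and the values of $b(X,L)$ already computed in the two cases.
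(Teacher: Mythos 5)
Your proof is correct and follows essentially the same route as the paper's: the same Zariski-decomposition dichotomy, the same conic-fibration argument showing failure of balancedness when the nef part is nontrivial, and the same contraction of the disjoint $(-1)$-curves to $\bP^2$ to rule out equality curves when $b(X,L)=1$ (your divisibility obstruction $3\deg(C')=(-K_X,C)=2$ is just a repackaging of the paper's computation $(L,C)=(3\pi^*h,C)\ge 3>2$). The only additions are welcome justifications of facts the paper takes for granted, e.g.\ that the components of the rigid divisor are pairwise disjoint exceptional curves.
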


\begin{proof}
Assume that $a(X,L)=1$.  
In {\em Case 1}, we must have
$$
L+K_X=D=\sum_{i=1}^n e_iE_i, \quad e_i>0,
$$
with $E_i$ disjoint exceptional curves. 
Assume that $L$ is not balanced so that $b(X,L)=1$.
Let $\pi:X\ra \bP^2$ be the blowdown of $E_1,\ldots, E_n$ and $h$ a hyperplane class on 
$\bP^2$. Then 
$$
L=-K_X+D = 3\pi^*h + \sum_{i=1}^n (e_i-1)E_i.
$$
Let $C$ be an irreducible rational curve which is not in $Z$.
If $C$ does not meet any of the $E_i$ then 
$$
(L,C) = (3\pi^*h, C) \ge 3 >2.
$$
If $C$ meets at least one of the $E_i$ then 
$$
(L,C) = (-K_X,C) +(D,C) >2
$$
since the first summand is $\ge 2$.
It follows that $a(C,L_C)<1$, i.e., $L$ is balanced, contradicting our assumption. 

In {\em Case 2}, we have
$$
L+K_X=D=P+ \sum_{i=1}^n e_iE_i, \quad e_i\ge 0,
$$
where $P$ is nef and $E_i$ are disjoint exceptional divisors. 
Let $\pi : X\ra \bP^1$ be the fibration induced by the semi-ample line bundle $P$. 
The general fiber $F$ of $\pi$ is a conic and 
$$
\rk \, \NS(X) - \rk \, \NS_{\pi}(X) =1.
$$  
We have $(F,F)=0$, $(-K_X,F) = 2$, and the class of $F$ is  proportional to $P$. Hence, for any such $F$, 
$$
a(F,L|_F) =a(X,L), \quad b(F,L|_F) =b(X,L)=1.
$$
Thus $L$ is not balanced.
\end{proof}

\section{Equivariant geometry}
\label{sect:equi}

Let $G$ be a connected linear algebraic group, $H\subset G$ a closed subgroup, and 
$X$ a projective equivariant compactification of $X^{\circ}:=H\backslash G$,
a quasi-projective variety \cite[Ch.~II]{borel}.
Applying equivariant resolution of singularities we may assume that $X$ is smooth and 
the boundary
$$
\cup_{\alpha\in \mathcal A} D_{\alpha} = X\setminus X^{\circ}
$$
is a divisor with normal crossings with irreducible components $D_{\alpha}$.  
If $H$ is a parabolic subgroup of a semi-simple group $G$,
then there is no boundary, i.e., $\mathcal A$ is empty,
and $H\backslash G$ is a generalized flag variety
which was discussed in Section~\ref{sect:balanced}.
Throughout, we will assume that 
$\mathcal A$ is not empty.

Let $\mathfrak X(G)^*$ be the group of algebraic characters of $G$ and 
$$
\mathfrak X(G, H)^{*}=\{\, \chi : G\ra \mathbb G_m \,|\,  \chi(hg)=\chi(g), \quad 
\forall h\in H \, \}
$$ 
the subgroup of characters whose restrictions to $H$ are trivial.
Let $\Pic^G(X)$ be the group of isomorphism classes of 
$G$-linearized line bundles on $X$ and $\Pic(X)$ the Picard group of $X$. 
For $L \in \Pic^G(X)$, the subgroup $H\subset G$ acts linearly on the fiber $L_x$ at $x 
= H \in H\backslash G$.
This defines a homomorphism 
$$
\Pic^G(X) \ra \mathfrak X(H)^{*}
$$
to characters of $H$. Let $\Pic^{(G, H)}(X)$ be the kernel of this map. 
We will identify line bundles and divisors with their classes in $\Pic(X)$. 

\begin{prop}
\label{prop:sequences}
Let $G$ be a connected linear algebraic group
and $H$ a closed subgroup of $G$.
Let $X$ be a smooth projective equivariant compactification of $X^\circ := H\backslash G$
with a boundary $\cup_{\alpha\in \mathcal A} D_{\alpha}$.
Then
\begin{enumerate}
\item we have an exact sequence
$$
0 \ra \mathfrak X(G, H)^{*} \ra \oplus_{\alpha \in \mathcal A} \bZ D_\alpha \ra \Pic(X) \ra \Pic (X^\circ) \ra 0;
$$
\item we have an exact sequence
$$
0 \ra \mathfrak X(G, H)^{*}_{\bQ} \ra \Pic^{(G, H)}(X)_{\bQ} \ra \Pic(X)_{\bQ};
$$
and the last homomorphism is surjective when 
$$
\mathfrak C(G, H) := \mathrm{Coker} (\mathfrak X(G)^* \ra \mathfrak X(H)^*)
$$
or equivalently, $\Pic (X^\circ)$, is finite.
\item we have a canonical injective homomorphism
$$
\Psi : \oplus_{\alpha \in \mathcal A}\bQ D_{\alpha} \hookrightarrow \Pic^{(G, H)}(X)_{\bQ};
$$
which is an isomorphism when 
$
\mathfrak C(G, H)
$
is finite.
\end{enumerate}
\end{prop}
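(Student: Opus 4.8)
The plan is to handle the three assertions in sequence, using the excision sequence for class groups together with Rosenlicht's unit theorem for (1), the linearization sequence plus a character-twisting argument for (2), and a comparison of the resulting two exact sequences via the five lemma for (3).

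For (1), since $X$ is smooth I identify $\Pic(X)=\mathrm{Cl}(X)$, and likewise for the smooth homogeneous space $X^\circ$. The excision sequence for the open immersion $X^\circ\hookrightarrow X$ with codimension-one complement $\cup_\alpha D_\alpha$ gives exactness of
$$
\oplus_\alpha \bZ D_\alpha \ra \Pic(X)\ra \Pic(X^\circ)\ra 0 .
$$
The remaining task is to identify the kernel of the first map: a combination $\sum n_\alpha D_\alpha$ lies in it precisely when it is principal, $\sum n_\alpha D_\alpha=\mathrm{div}(f)$, and such an $f$ is exactly a unit of $\cO(X^\circ)$. Since the only global units on the projective integral $X$ are constants, $f\mapsto \mathrm{div}_X(f)$ identifies $\cO(X^\circ)^*/k^*$ with this kernel. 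Finally I apply Rosenlicht's theorem to the quotient $G\ra H\backslash G$: pulling a unit on $X^\circ$ back to $G$ yields a constant times a character $\chi$, and left $H$-invariance forces $\chi|_H=1$, so $\cO(X^\circ)^*/k^*\cong \mathfrak X(G,H)^*$, completing (1).

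For (2), the forgetful map $\Pic^G(X)\ra\Pic(X)$ has kernel $\mathfrak X(G)^*$ (linearizations of $\cO_X$ are characters, using $\cO(X)^*=k^*$); intersecting with $\Pic^{(G,H)}(X)$, which is defined by the condition $\chi|_H=1$, shows $\ker(\Pic^{(G,H)}(X)\ra\Pic(X))=\mathfrak X(G,H)^*$, giving the asserted left-exact sequence after $\otimes\bQ$. For surjectivity when $\mathfrak C(G,H)$ is finite I would argue as follows: since the obstruction to linearizing lies in the finite group $\Pic(G)$ (see \cite{KKV}), a suitable positive power $L^{\otimes m}$ admits a linearization $\tilde L\in\Pic^G(X)$; its image in $\mathfrak X(H)^*$ has finite order in $\mathfrak C(G,H)$, so after a further power it becomes $\chi|_H$ for some $\chi\in\mathfrak X(G)^*$, and twisting $\tilde L$ by $-\chi$ lands in $\Pic^{(G,H)}(X)$ over a nonzero multiple of $L$. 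Hence the cokernel of $\Pic^{(G,H)}(X)\ra\Pic(X)$ is torsion and the map is surjective rationally.

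For (3): each $D_\alpha$ is $G$-invariant, because the connected group $G$ permutes the finitely many boundary components and so fixes each; thus $\cO_X(D_\alpha)$ carries a canonical $G$-linearization for which the canonical section is invariant, and as $D_\alpha$ avoids the $H$-fixed base point $x_0=H$ this section trivializes the fiber $H$-equivariantly, so $\cO_X(D_\alpha)\in\Pic^{(G,H)}(X)$. This defines $\Psi$, whose composite with the forgetful map is by construction the boundary map of (1). The crucial compatibility to verify is that $\Psi$ restricted to the kernel of (1) — on a principal boundary divisor $\sum n_\alpha D_\alpha=\mathrm{div}(f_\chi)$ — agrees with the inclusion $\mathfrak X(G,H)^*\hookrightarrow \Pic^{(G,H)}(X)$ of (2): trivializing $\cO_X(\mathrm{div}\,f_\chi)\cong\cO_X$ transports the canonical linearization to the character by which $f_\chi$ transforms under right translation, which is $\chi$. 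This gives injectivity of $\Psi$, since an element of $\ker\Psi$ first maps to $0$ in $\Pic(X)$, hence equals $\mathrm{div}(f_\chi)$, and then $\Psi$ of it is the image of $\chi$, vanishing only if $\chi=0$ and thus $\sum n_\alpha D_\alpha=0$. When $\mathfrak C(G,H)$, equivalently $\Pic(X^\circ)$, is finite we have $\Pic(X^\circ)_\bQ=0$, so (1) and (2) become short exact sequences with identical outer terms linked by $\Psi_\bQ$ in the middle, and the five lemma yields that $\Psi_\bQ$ is an isomorphism. The main obstacle I anticipate is exactly this compatibility check in (3): identifying the two occurrences of $\mathfrak X(G,H)^*$ coherently by matching the canonical linearization of a principal boundary divisor with its Rosenlicht character, alongside securing the surjectivity in (2), which rests on the finiteness of $\Pic(G)$ and of $\mathfrak C(G,H)$.
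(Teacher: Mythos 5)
Your proof is correct and follows essentially the same route as the paper's: excision plus Rosenlicht for (1), the linearization results of Mumford and Knop--Kraft--Vust for (2), and the construction of $\Psi$ via $G$-linearizations whose canonical sections $s_\alpha$ are invariant, with injectivity reduced to the observation that a trivially linearized principal boundary divisor must come from a $G$-fixed character function, hence vanish. The only cosmetic difference is that you endow $\mathcal O_X(D_\alpha)$ itself with the canonical linearization coming from the $G$-stable ideal sheaf, whereas the paper linearizes a multiple of $D_\alpha$ and then twists by a character of $G$ to make $s_\alpha$ invariant; both yield the same map after tensoring with $\bQ$.
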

\begin{proof}
The first statement is easy. 
The second assertion follows from \cite[Corollary 1.6]{GIT} and \cite[Proposition 3.2(i)]{KKV}.

For the last assertion:
Corollary 1.6 of \cite{GIT} implies that some multiple of $D_\alpha$ is $G$-linearizable.
We may assume that $G$ acts on the finite-dimensional vector space $\rH^0(X, \mathcal O_X(D_\alpha))$, 
via this $G$-linearization. Let $s_\alpha$ be the section corresponding to $D_\alpha$. 
Then $s_\alpha \in \rH^0(X, \mathcal O_X(D_\alpha))$ is an eigenvector of the action by $G$. 
After multiplying by a character of $G$, if necessary, we may assume that $s_\alpha$ 
is fixed by the action of $G$. 
We let $\Phi(D_\alpha)$ be this $G$-linearization.

Suppose that $\Phi(\sum_\alpha d_\alpha D_\alpha) = \mathcal O_X$, 
with trivial $G$-linearization, where $d_\alpha \in \bZ$. 
Then there exists a rational function $f$ such that
$$
\text{div} (f) = \sum_\alpha d_\alpha D_\alpha.
$$
We may assume that $f$ is a character of $G$ whose restriction to $H$ is trivial. 
By the definition of $\Phi$, the function $f$ must be fixed by the $G$-linearization. 
This implies that $f \equiv 1$.
When $\mathfrak C(G, H)$ is finite, the surjectivity of $\Phi$ follows from (1) and (2).
\end{proof}

\begin{prop}
\label{prop:kx}
Let $G$ be a connected linear algebraic group
and $H$ a closed subgroup of $G$.
Let $X$ be a smooth projective equivariant compactification of $X^\circ := H\backslash G$
with a boundary $\cup_{\alpha\in \mathcal A} D_{\alpha}$.
If $\mathfrak C(G, H)$ is finite, then the anticanonical divisor $-K_X$ is big.
\end{prop}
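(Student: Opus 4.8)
The plan is to exhibit $-K_X$ as the sum of the reduced boundary $\Delta := \sum_{\alpha\in\cA} D_\alpha$ and an effective divisor, and then to prove that $\Delta$ alone is already big; since the sum of a big class and an effective (hence pseudo-effective) class is again big, this gives the claim. The finiteness hypothesis on $\mathfrak{C}(G,H)$ will be used only in the second half, to control $\NS(X)$.

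First I would produce a distinguished nonzero section of $\cO_X(-K_X - \Delta)$. Each $\partial \in \mg = \mathrm{Lie}(G)$ gives a global regular vector field $\partial^X$ on $X$, exactly as in the proof of Proposition~\ref{prop:gp}. Because $X^\circ$ is a single $G$-orbit, its complement $\Delta$ is $G$-invariant, and $G$ being connected it preserves each component $D_\alpha$; hence the flow of every $\partial^X$ preserves $\Delta$, so $\partial^X$ is tangent to $\Delta$ and defines a global section of the logarithmic tangent sheaf $T_X(-\log \Delta)$. The orbit map through a point of $X^\circ$ is dominant, so its differential $\mg \to T_{X,x}$ is surjective at a general $x$; choosing $\partial_1,\dots,\partial_n \in \mg$ with $n=\dim X$ whose vector fields are independent at such an $x$, the wedge $\partial_1^X \wedge \cdots \wedge \partial_n^X$ is a global section of $\det T_X(-\log\Delta) = \cO_X(-K_X - \Delta)$ nonvanishing at $x$. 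Therefore $-K_X - \Delta$ is effective, i.e. $-K_X = \Delta + E$ with $E$ effective. I emphasize that this step is valid for any smooth equivariant compactification with simple normal crossings boundary and does not require computing the pole orders of an invariant volume form.

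It then remains to prove that $\Delta$ is big, and this is where the hypothesis enters. By Proposition~\ref{prop:sequences}(2)--(3), finiteness of $\mathfrak{C}(G,H)$ forces the composite $\bigoplus_{\alpha}\bQ D_\alpha \to \Pic(X)_\bQ$ to be surjective, so the classes $[D_\alpha]$ span $\NS(X)_\bR$. Now I would invoke the following elementary convexity fact: if classes $v_1,\dots,v_m$ span a finite dimensional space and $c_i>0$, then $\sum_i c_i v_i$ lies in the topological interior of the cone they generate, since for any direction $u=\sum_i\lambda_i v_i$ the point $\sum_i(c_i+\varepsilon\lambda_i)v_i$ remains in the cone for small $\varepsilon>0$. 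Applying this to $[\Delta]=\sum_{\alpha}[D_\alpha]$, whose summands are effective and span $\NS(X)_\bR$, places $[\Delta]$ in $\Lambda^{\circ}_{\eff}(X)$; thus $\Delta$ is big, and $-K_X=\Delta+E$ is big as well.

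The main obstacle is the bigness of $\Delta$: the effectivity of $-K_X-\Delta$ is essentially formal, but without the spanning property a positive sum of boundary classes need not lie interior to $\Lambda_{\eff}(X)$. This is precisely what finiteness of $\mathfrak{C}(G,H)$ supplies through Proposition~\ref{prop:sequences}, and it is the one place the hypothesis is indispensable. A secondary point to handle with care is the identification $\det T_X(-\log\Delta)=\cO_X(-K_X-\Delta)$ together with the verification that each $\partial^X$ is genuinely logarithmic along $\Delta$, rather than merely a regular vector field.
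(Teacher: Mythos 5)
Your proof is correct and follows essentially the same route as the paper's: both produce the section $\partial_1^X\wedge\cdots\wedge\partial_n^X$ of $-K_X$ vanishing along every boundary component, and both deduce bigness from the fact that finiteness of $\mathfrak C(G,H)$ forces the $[D_\alpha]$ to span $\NS(X)_{\bQ}$, via the same perturbation-by-an-ample-class convexity argument. The only difference is cosmetic: where the paper cites \cite[Lemma 2.4]{chambert-t02} for the vanishing of this section along the boundary, you give a self-contained justification by viewing the $\partial_i^X$ as sections of $T_X(-\log\Delta)$ and identifying $\det T_X(-\log\Delta)$ with $\cO_X(-K_X-\Delta)$.
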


\begin{proof}
Let $\mathfrak g$, resp. $\mathfrak h$,  be the Lie algebra of $G$, resp. $H$.  
For any $\partial \in \mathfrak g$, there is a unique global vector field $\partial^X$ on $X$ such that
for any open set $U \subset X$ and any $\mathsf f \in \mathcal O_X(U)$,
$$
\partial^X(\mathsf f) (x) = \partial_g \mathsf f(x\cdot g)|_{g=1}.
$$
Let $\partial_1, \ldots, \partial_n \in \mathfrak g$ be a lift of a basis for $\mathfrak g/ \mathfrak h$.
Consider the following global section of the anticanonical bundle $\det(\mathcal T_X)$:
$$
\delta = \partial_1^X \wedge \cdots \wedge \partial_n^X.
$$
Note that this section is nonzero at $x = H \in H \backslash G = X^\circ$.
The proof of \cite[Lemma 2.4]{chambert-t02} implies that $\delta$ vanishes along the boundary. 
Hence 
$$
\text{div}(\delta) = \sum_\alpha d_\alpha D_\alpha + (\text{an effective divisor in $X^\circ$}),
$$
where $n_\alpha >0$.
When $\mathfrak C(G, H)$ is finite,
then $\Pic(X)_\bQ$ is generated by boundary components
so that every ample divisor can be expressed as a linear combination of $D_\alpha$'s.
This implies that $\sum_\alpha d_\alpha D_\alpha$ is big
so $\text{div}(\delta)$ is also big.
\end{proof}

From now on we consider the following situation: 
Let $H\subset M\subset G$ be connected linear algebraic groups.
Typical examples arise when $G$ is a unipotent group or a product of absolutely simple groups
and $H$ and $M$ are arbitrary subgroups such that $H\backslash M$ is connected.
Let $X$ be a smooth projective $G$-equivariant compactification of $H\backslash G$, 
and $Y$ the induced compactification of $H\backslash M$.

\begin{lemm}
\label{lemm:domin}
Let $\pi : X\ra X'$ be a $G$-equivariant morphism onto a projective equivariant compactification of $M\backslash G$.
Assume that the projection $G \ra M \backslash G$ admits a rational section.
Then
\begin{itemize}
\item $\pi(D_{\alpha})=X'$ if and only if $D_{\alpha}\cap Y\neq \emptyset$;
\item if $D_{\alpha}\cap Y\neq \emptyset$ then $D_{\alpha}\cap Y$ is irreducible;
\item  if  $D_{\alpha}\cap Y\neq \emptyset$ and  $D_{\alpha'}\cap Y\neq \emptyset$, for $\alpha\neq\alpha'$
then  $D_{\alpha}\cap Y\neq D_{\alpha'}\cap Y$.
\end{itemize}
\end{lemm}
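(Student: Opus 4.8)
The plan is to reduce all three statements to a single local product decomposition of $\pi$ over a neighborhood of the base point, extracted from the rational section; the group action then makes the rest formal. I would begin with two preliminary observations. First, since $G$ is connected and $X^\circ = H\backslash G$ is the open $G$-orbit, $G$ acts on the finite set of boundary components through a connected group and hence trivially, so each $D_\alpha$ is $G$-invariant. Second, on open orbits the $G$-equivariant map $\pi$ is the projection $Hg \mapsto Mg$; writing $x' \in M\backslash G \subset X'$ for the base point, the fiber $\pi^{-1}(x')$ meets $X^\circ$ exactly in $Y^\circ = H\backslash M$, so $Y = \overline{Y^\circ} \subseteq \pi^{-1}(x')$.

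The crux is to upgrade the rational section to a genuine trivialization near $x'$. Using that $G$ acts transitively on $M\backslash G$, I would translate the section by a suitable element of $G$ so that the resulting section $s$ is regular on a Zariski open neighborhood $V \subseteq M\backslash G$ of $x'$. Set $F := \pi^{-1}(x')$ and consider $\Psi : F \times V \to \pi^{-1}(V)$, $(y,z) \mapsto y \cdot s(z)$. This is a morphism, and by $G$-equivariance together with the fact that $s(z)$ lies in the coset $z$ one checks $\pi(y\cdot s(z)) = x'\cdot s(z) = z$; the assignment $p \mapsto (p\cdot s(\pi(p))^{-1},\,\pi(p))$ is a two-sided inverse, so $\Psi$ is an isomorphism over $V$. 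Since $\pi^{-1}(V)$ is a nonempty open subset of the irreducible variety $X$, this forces $F$ to be irreducible; as $F \cap X^\circ = Y^\circ$ is dense in $F$, we get $F = \overline{Y^\circ} = Y$, that is, $\pi^{-1}(V) \cong Y \times V$ over $V$.

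Granting this, all three bullets follow. For the first: $\pi$ is proper, so $\pi(D_\alpha)$ is closed and $G$-invariant, and $D_\alpha \cap Y = D_\alpha \cap \pi^{-1}(x') \neq \emptyset$ is equivalent to $x' \in \pi(D_\alpha)$; because $x'$ lies in the open orbit $M\backslash G$, a $G$-invariant closed set contains $x'$ precisely when it contains all of $\overline{M\backslash G} = X'$. For the remaining two, $G$-invariance gives $y\cdot s(z) \in D_\alpha \iff y \in D_\alpha$, so $\Psi$ identifies $D_\alpha \cap \pi^{-1}(V)$ with $(D_\alpha \cap Y)\times V$. When $D_\alpha \cap Y \neq \emptyset$ the former is a nonempty open subset of the irreducible divisor $D_\alpha$, hence irreducible, and therefore so is $(D_\alpha \cap Y)\times V$; as $V$ is irreducible this yields that $D_\alpha \cap Y$ is irreducible. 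Finally, if $D_\alpha \cap Y = D_{\alpha'}\cap Y$ then $D_\alpha \cap \pi^{-1}(V) = D_{\alpha'}\cap \pi^{-1}(V)$, and taking closures in $X$ (each side being dense in its divisor) gives $D_\alpha = D_{\alpha'}$.

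I expect the only real obstacle to be the middle step: making the section regular at $x'$ and checking that $\Psi$ is an isomorphism whose fiber over $x'$ is exactly $Y$ rather than some larger component of $\pi^{-1}(x')$. Everything else—invariance of the $D_\alpha$, properness of $\pi$, and the passage from the product to irreducibility and distinctness—is bookkeeping once the trivialization is in place. This is precisely where the hypothesis on the rational section of $G \to M\backslash G$ is used: without it the fibration $\pi$ could carry monodromy permuting several boundary components of the fiber, and a single $D_\alpha$ need not restrict to an irreducible subset of $Y$.
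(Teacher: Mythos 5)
Your proof is correct and follows essentially the same route as the paper's: the paper uses the rational section to define the retraction $x \mapsto x\cdot(\sigma\circ\pi(x))^{-1}$ from $D_\alpha^\circ$ onto $D_\alpha\cap Y$ --- exactly the inverse of your trivialization $\Psi$ --- and deduces the second and third bullets from its dominance and $G$-equivariance. Your write-up is somewhat more careful (in particular you verify that $Y$ is the entire fiber over the base point, which the paper leaves implicit), but the underlying mechanism is identical.
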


\begin{proof}
We have the diagram

\

\centerline{
\xymatrix{
H\backslash G\ar[d] & \subset  & X\ar[d]_{\pi} & \supset & Y \ar[d]^{\pi}\\
M\backslash G       & \subset  & X'            & \supset &  M\cdot e = {\rm point}          
}
}

\

\noindent
The first claim is evident. To prove the second assertion, choose a rational section $\sigma : M \backslash G\dashrightarrow G$ of the projection $G \ra M\backslash G$.
We may assume that a rational section is well-defined at a point $M \in M \backslash G$.
Consider the diagram

\

\centerline{
\xymatrix{
D_{\alpha} \ar[d]_{\pi} & \supset & D_{\alpha}^{\circ} \ar[d]^{\pi}\\
          X'            & \supset & M\backslash G           
}
}

\

\noindent
where $D_{\alpha}^{\circ} = D_{\alpha} \cap \pi^{-1}(M \backslash G)$.
We define a rational map 
$$
\begin{array}{rcl}
\Psi : D_{\alpha}^{\circ} & \dashrightarrow &  D_{\alpha}\cap Y \\ 
               x          & \mapsto & x\cdot (\sigma\circ \pi(x))^{-1}.
\end{array}
$$
Since $\Psi$ is dominant, $D_{\alpha}\cap Y$ is irreducible. 
Since the $G$-orbit of $D_\alpha \cap Y$ is $D_\alpha^\circ$, the third claim follows. 
\end{proof}

\begin{rema}
\label{rema:section}
When $M$ is a connected solvable group,
then $G$ is birationally isomorphic to $M \times (M\backslash G)$
so that the projection $G \ra M \backslash G$ has a rational section.
See \cite[Corollary 15.8]{LAG}.
\end{rema}

\begin{theo}
\label{thm:subgroup}
Let 
$$
H\subset M\subset G
$$ 
be connected linear algebraic groups.
Let $X$ be a smooth projective $G$-equivariant compactification of $H\backslash G$
and $Y\subset X$ the induced compactification of $H\backslash M$. 
Let $L$ be a big line bundle on $X$.
Assume that
\begin{itemize}
\item the projection $G \ra M \backslash G$ admits a rational section;
\item and $D := a(X, L)L + K_X$ is a rigid effective $\bQ$-divisor.
\end{itemize}
Furthermore, assume that either
\begin{enumerate}
\item $\Lambda_\eff (X)$ is finitely generated by effective divisors; or
\item there exists a birational contraction map
$f: X \dashrightarrow Z$ contracting $D$,
where $Z$ is a normal projective variety.
\end{enumerate}
Then $L$ is balanced with respect to $Y$.
\end{theo}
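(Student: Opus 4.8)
The plan is to turn the statement into a comparison of the minimal supported faces attached to $D$ on $X$ and to its restriction-plus-normal-bundle analogue on $Y$, extracting the strict inequality from the boundary combinatorics governed by Lemma~\ref{lemm:domin}. I would begin with three structural reductions. Since $D=a(X,L)L+K_X$ is rigid and $G$ is connected, each translate $g^{*}D$ is linearly equivalent to $D$, hence equal to $D$ by rigidity; thus $D$ is $G$-invariant. Its support is then a $G$-invariant proper closed subset, so it cannot contain the dense orbit $X^{\circ}=H\backslash G$ and must lie on the boundary, $D=\sum_{\alpha}d_{\alpha}D_{\alpha}$ with $d_{\alpha}\ge 0$. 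In particular $Y\not\subset\mathrm{Supp}(D)$, so $D|_{Y}$ is an honest effective divisor whose components, by Lemma~\ref{lemm:domin}, are distinct irreducible boundary divisors of $Y$. Second, I would check that $L|_{Y}$ is big: because $G$ is connected it acts trivially on $\NS(X)$, the translates $\{Yg\}$ cover $X$, and a general one avoids the augmented base locus of the big bundle $L$; restricting to such a translate keeps $L$ big, and translating back gives bigness of $L|_{Y}$. Third, the global vector fields $\partial^{X}$ attached to $\mathfrak g$ trivialize $\mathcal N_{Y/X}$ over $H\backslash M$ modulo the $\mathfrak m$-directions, so $\det\mathcal N_{Y/X}$ is effective and supported on the boundary of $Y$.

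The Fujita comparison is then immediate from adjunction:
$$
a(X,L)L|_{Y}+K_{Y}=D|_{Y}+\det\mathcal N_{Y/X}
$$
is effective, so $a(Y,L|_{Y})\le a(X,L)$. If this inequality is strict we are done, so I would assume equality; equivalently, the effective class $D_{Y}:=D|_{Y}+\det\mathcal N_{Y/X}$ is pseudo-effective but not big, and it is precisely the Fujita divisor $a(Y,L|_{Y})L|_{Y}+K_{Y}$ that computes $b(Y,L|_{Y})$.

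It remains to prove $b(Y,L|_{Y})<b(X,L)$, and this is the main obstacle. Here I would invoke hypotheses (1)/(2): under (1) the extremal face $\bigoplus_{d_{\alpha}>0}\bR_{\ge 0}D_{\alpha}$ is a supported face of $\Lambda_{\eff}(X)$, and under (2) the contraction $f$ realizes the same face as supported, so in either case $b(X,L)=\rk\,\NS(X)-k$, where $k$ is the number of components of $D$. The restriction map $r:\NS(X)\to\NS(Y)$ carries the face $F_{X}=\bigoplus\bR_{\ge 0}D_{\alpha}$ into the minimal supported face $F_{Y}$ of $D_{Y}$ on $Y$, because a face is extremal and both $D|_{Y}$ and $\det\mathcal N_{Y/X}$ are effective summands of $D_{Y}$. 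I would then compute both Picard ranks through the boundary exact sequences of Proposition~\ref{prop:sequences} for $(G,H)$ and for $(M,H)$: the difference $\rk\,\NS(X)-\rk\,\NS(Y)$ is controlled by the boundary divisors of $X$ disjoint from $Y$ — the vertical divisors in the sense of Lemma~\ref{lemm:domin}, which lie in $\ker r$ and are transverse to $F_{X}$ — together with the character groups $\mathfrak X(G,H)^{*}$ and $\mathfrak X(M,H)^{*}$. The crux is to show that this rank drop strictly exceeds $k-\dim F_{Y}$: the vertical divisors shrink the ambient rank under restriction without being forced to reappear in $F_{Y}$, whereas $\det\mathcal N_{Y/X}$ only enlarges $F_{Y}$ along boundary directions of $Y$ of the same type as those already coming from $D|_{Y}$. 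I expect the delicate point to be excluding the degenerate balance in which every vertical divisor is a component of $D$ while $\det\mathcal N_{Y/X}$ adds nothing; this is exactly where the standing equality $a(Y,L|_{Y})=a(X,L)$ — forcing $D_{Y}$ to be non-big — must be combined with finite generation (1) or the contraction (2) to produce at least one strictly collapsing direction, hence the strict inequality.
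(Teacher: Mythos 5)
There is a genuine gap, and it sits exactly where you flag it: the strict inequality $b(Y,L|_Y)<b(X,L)$. Your treatment of the $a$-inequality is essentially sound (rigidity plus connectedness of $G$ forces $D$ to be $G$-invariant and hence supported on the boundary, and adjunction then gives $a(Y,L|_Y)\le a(X,L)$), although even there you are implicitly assuming $Y$ is smooth so that $\mathcal N_{Y/X}$ and the adjunction formula make sense, which is not part of the hypotheses. But for the $b$-comparison you only describe what must be shown --- that the ``rank drop'' under restriction strictly exceeds the change in the dimensions of the relevant faces --- and you explicitly defer the crux (``excluding the degenerate balance'') to an unexplained interaction with hypotheses (1)/(2). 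That degenerate case is not excluded by anything you have written, so the argument does not close.

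The missing idea is the one the paper's proof is built on: choose a smooth projective $G$-equivariant compactification $X'$ of $M\backslash G$ and resolve the indeterminacy of the induced $G$-map so that one has an equivariant morphism $\pi:X\ra X'$ with $Y$ a \emph{general fiber}. This does three things at once. First, it makes $Y$ smooth with trivial normal bundle, so $K_Y=K_X|_Y$ and your $\det(\mathcal N_{Y/X})$ term disappears. Second, Lemma~\ref{lemm:domin} together with Proposition~\ref{prop:sequences}(1) shows that the restriction map
$$
\Phi:\NS(X)/V_F\lra\NS(Y)/V'
$$
is \emph{surjective}, where $V_F$ is spanned by the components $E_i$ of $D$ and $V'$ by the components of the $E_i\cap Y$; hypotheses (1)/(2) are used only to identify $V_F$ with the span of the minimal supported face, i.e.\ $b(X,L)=\rk\,\NS(X)-n$. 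Third --- and this is the step your proposal lacks --- $\pi^*\NS(X')$ is a nonzero subspace of $\ker\Phi$: the pullback of an ample class on $X'$ restricts to zero on the fiber $Y$ yet cannot lie in the span of the rigid components of $D$. This nontrivial kernel of a surjection is precisely the ``strictly collapsing direction'' you were hoping to extract from (1)/(2), and it yields $b(Y,L|_Y)\le\mathrm{codim}(V')<b(X,L)$ with no case analysis. Without introducing $\pi$ (or some equivalent source of a class that dies on $Y$ but is independent of the $E_i$), the comparison of Picard ranks via the two boundary exact sequences cannot by itself produce strictness.
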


\begin{proof}
Let $X'$ be any smooth projective equivariant compactification of $M\backslash G$.
We consider a $G$-rational map $\pi : X \dashrightarrow X'$ mapping
$$
\pi : G \ni g \mapsto Mg \in M \backslash G.
$$
After applying a $G$-equivariant resolution 
of the indeterminacy of the projection $\pi$ if necessary,
we may assume that $\pi$ is a surjective morphism
and $X$ is a smooth equivariant compactification of $H\backslash G$
with a boundary divisor $\cup_\alpha D_\alpha$.
Note that $Y$ is a general fiber of $\pi$ so that $Y$ is smooth.
Write the rigid effective $\bQ$-divisor $D = a(X, L)L + K_X$ by
$$
D = a(X, L)L + K_X = \sum_{i=1}^n e_i E_i,
$$
where $E_i$'s are irreducible components of $a(X, L)L + K_X$
and $e_i \in \bQ_{>0}$.
Our goal is to show that 
$$
(a(Y, L|_Y),b(Y, L|_Y)) < (a(X, L), b(X, L)).
$$
Since the $E_i$'s are rigid effective divisors,
they are boundary components.
This implies that 
$$
a(X, L)L|_Y + K_Y = (a(X, L)L + K_X)|_Y = \sum_{i=1}^n e_i E_i |_Y \in \Lambda_\eff(Y).
$$
It follows that
$$
a(Y, L|_Y) \leq a(X, L).
$$

Assume that $a(Y, L|_Y) = a(X, L) =: a$.
Let $F$ be the minimal supported face of $\Lambda_\eff(X)$
containing $D = aL + K_X = \sum e_i E_i$
and $V_F$ a vector subspace generated by $F$.
Either condition (1) or (2) guarantees that
$F$ is generated by $E_i$'s so that
$$
b(X, L) = \rk \, \NS(X) - n.
$$
(See Proposition~\ref{prop:Beta} and Example~\ref{exam:biganti}.)
Let $F'$ be the minimal supported face of $\Lambda_\eff(Y)$
containing 
$$
D|_Y = aL|_Y + K_Y = \sum e_iE_i|_Y.
$$
Let $V'$ be a vector subspace generated by all components of $E_i\cap Y$.
Since $F'$ contains all components of $E_i\cap Y$,
we have $b(Y, L|_Y) \leq \mathrm{codim}(V')$.
Consider the restriction map:
$$
\Phi : \NS (X) / V_F \ra \NS (Y) / V'.
$$
It follows from \cite[Proposition 3.2(i)]{KKV}, Lemma~\ref{lemm:domin},
and the exact sequence (1) in Proposition~\ref{prop:sequences}
that $\Phi$ is surjective.
On the other hand, 
$\pi^*\NS(X')$ is contained in the kernel of $\Phi$,
so $\Phi$ has the nontrivial kernel.
We conclude that
$$
b(Y, L|_Y) \leq \mathrm{codim}(V') < b(X, L).
$$
\end{proof}

\begin{rema}
\label{rema:numerical}
Conditions (1) and (2) can be replaced by
the condition: the numerical dimension $\nu(D)$ is zero (see \cite{L11B} for definitions).
\end{rema}

\begin{coro}
\label{coro:subgroup}
Let 
$
H\subset M\subset G
$ 
be connected linear algebraic groups
and $X$ a smooth projective equivariant compactification of $H\backslash G$
with big anticanonical bundle.
Let $Y\subset X$ be the induced compactification of $H\backslash M$. 
Assume that the projection $G \ra M \backslash G$ admits a rational section.
Then $-K_X$ is balanced with respect to $Y$.
\end{coro}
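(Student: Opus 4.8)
The plan is to deduce this corollary as the special case $L = -K_X$ of Theorem~\ref{thm:subgroup}, so the real task is simply to verify the hypotheses of that theorem. The rational-section hypothesis is assumed outright, and $-K_X$ is big by assumption. Since $X$ is smooth, hence $\bQ$-factorial terminal, and $-K_X$ is big, Example~\ref{exam:biganti} gives $a(X, -K_X) = 1$ and $b(X, -K_X) = \rk\,\NS(X)$. Consequently the divisor $D := a(X, -K_X)(-K_X) + K_X = (-K_X) + K_X = 0$ is, trivially, a rigid effective $\bQ$-divisor, as $\rH^0(\cO_X(0)) = 1$. Thus the only nontrivial hypothesis of Theorem~\ref{thm:subgroup}, that $D$ be rigid effective, holds in degenerate form.

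With $D = 0$ the supplementary conditions (1)/(2) of Theorem~\ref{thm:subgroup} are automatic: condition (2) is satisfied by the contraction $f = \mathrm{id}_X : X \ra X$ onto the normal variety $Z = X$, since there is no divisor to contract. Equivalently, if one first resolves the indeterminacy of $\pi : X \dashrightarrow X'$ by some $\beta : \tilde{X} \ra X$, then Proposition~\ref{prop:alpha} gives $a(\tilde{X}, \beta^*(-K_X)) = 1$, and $\beta^*(-K_X) + K_{\tilde{X}} = \sum_i d_i E_i$ is supported on the $\beta$-exceptional locus with $d_i > 0$, hence rigid effective; condition (2) is then met by $\beta$ itself, and Propositions~\ref{prop:alpha} and~\ref{prop:Beta} transfer the conclusion back to $X$. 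Either way, all hypotheses are in place and Theorem~\ref{thm:subgroup} applies.

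For transparency I would trace how the balanced inequality emerges here, as this is the arithmetically meaningful content. One mild point to record first: since $Y$ is a general fiber of $\pi$, its normal bundle in $X$ is trivial, so $-K_X|_Y = -K_Y$, which is what pins down the bigness and uniruledness conditions for $Y$ in Definition~\ref{defn:balanced}. Restricting $D = 0$ to $Y$ shows $a(X, -K_X)(-K_X)|_Y + K_Y \in \Lambda_{\eff}(Y)$, hence $a(Y, -K_X|_Y) \le a(X, -K_X) = 1$. In the equality case $a(Y, -K_X|_Y) = 1$ one compares $b$-invariants via the restriction map $\Phi : \NS(X) \ra \NS(Y)$: by Proposition~\ref{prop:sequences}(1), Lemma~\ref{lemm:domin}, and \cite[Proposition 3.2(i)]{KKV}, $\Phi$ is surjective, while $\pi^*\NS(X') \subset \ker\Phi$. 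Because $Y \subsetneq X$ forces $M \subsetneq G$, the space $M\backslash G$ is positive-dimensional, so $\pi^*\NS(X') \neq 0$ and $\ker\Phi$ is nontrivial; therefore $b(Y, -K_X|_Y) \le \rk\,\NS(Y) < \rk\,\NS(X) = b(X, -K_X)$.

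The main point to confirm is structural rather than computational: because $L = -K_X$ forces $a(X, -K_X) = 1$ and $D = 0$, the corollary is a genuine specialization of Theorem~\ref{thm:subgroup}, requiring no geometric input beyond the nonvanishing $\pi^*\NS(X') \neq 0$ guaranteed by $Y \subsetneq X$. I expect no serious obstacle; the only bookkeeping subtlety is the passage through the resolution of the indeterminacy of $\pi$, which is handled uniformly by the birational invariance of the Fujita invariant and of $b$ (Propositions~\ref{prop:alpha} and~\ref{prop:Beta}).
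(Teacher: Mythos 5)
Your proposal is correct and matches the paper's (implicit) argument exactly: the corollary is stated without a separate proof precisely because it is the specialization $L=-K_X$ of Theorem~\ref{thm:subgroup}, where Example~\ref{exam:biganti} gives $a(X,-K_X)=1$, so $D=0$ is (degenerately) rigid effective and condition (2) holds trivially. Your additional remarks tracing the inequality through the proof of the theorem are accurate but not needed to justify the corollary.
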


\begin{exam}
\label{exam:interest}
Let $G={\rm PGL}_2$, $M=B$, a Borel subgroup of $G$ and $H=1$. 
Let $X=\bP^3$ be the standard equivariant compactification of $G$
given by
$$
\mathrm{PGL}_2 \ni 
\begin{bmatrix}
       a & b\\
       c & d         
       \end{bmatrix}
 \mapsto [a : b : c : d] \in \bP^3,
$$
with boundary $D:=\{ad -bc=0\} = \bP^1\times \bP^1$.
Then $Y=\bP^2$; with boundary $D_Y=Y\setminus B = \ell_1\cup \ell_2$, a union of two intersecting lines.  
We have $X'=\bP^1$. The projection 
$$
\pi :X\dashrightarrow X'
$$
has indeterminacy along one of lines, say $\ell_1$. Blowing up $\ell_1$, we obtain a fibration
$$
\tilde{\pi} : \tilde{X}\ra \bP^1. 
$$
We have
$$
a(\tilde{X}, -K_{\tilde{X}})= a(\tilde{Y}, -K_{\tilde{X}}|_{\tilde{Y}})=a(\tilde{Y}, -K_{\tilde{Y}})=1.
$$
Every boundary component of $\tilde{X}$ dominates the base $\bP^1$, since the $G$-action is transitive on the base. 
Lemma~\ref{lemm:domin} shows that the number of boundary components of $\tilde{Y}$
is equal to the number of boundary components of $\tilde{X}$, which equals the rank of $\NS(\tilde{X}) = 2$.  
However, $\mathfrak X(B)^*=\bZ$, and in particular, the rank of the Picard group of $\tilde{Y}$ is 
one less than the number of boundary components, i.e., 
$$
b(\tilde{Y}, -K_{\tilde{X}}|_{\tilde{Y}}) = 1 < 2 = b(\tilde{X}, -K_{\tilde{X}}).
$$
\end{exam}

The existence of rational sections is important,
and the second statement in Lemma \ref{lemm:domin} is not true in general:
\begin{exam}
\label{exam:hilb}
Consider the standard action of $\mathrm{PGL}_3$ on $\bP^2$.
Let $\bP^5$ be the space of conics and consider
$$
X^\circ = \{ T = (C, [p_1, p_2, p_3]) \in \bP^5 \times \mathrm{Hilb}^{[3]}(\bP^2) 
\mid \text{$T$ satisfies $(*)$} \},$$
$$(*) : \text{$C$ is smooth, $p_i$'s are distinct, and $p_i \in C$}.$$
Let $X$ be the Zariski closure of $X^\circ$, it is the Hilbert scheme of conics with zero dimensional subschemes
of length 3,
and is a smooth equivariant compactification of
a homogeneous space $S_3\backslash \mathrm{PGL}_3$.
Consider a $\bP^2$-fibration $f : X \ra \mathrm{Hilb}^{[3]}(\bP^2)$, the fiber over  a general point
$Z \in \mathrm{Hilb}^{[3]}(\bP^2)$ is a $\bP^2$, parametrizing conics passing through $Z$.
The degenerate cases correspond to two lines passing through $Z$; these 
form three boundary components $l_i$ on $\bP^2$.
However, general points on these components are on the same $\mathrm{PGL}_3$-orbit,
so there exists an irreducible boundary divisor $D \subset X$ 
such that $D \cap f^{-1}(Z) = l_1 \cup l_2\cup l_3$.
In other words, there is a non-trivial monodromy action on $l_i$'s.
However, the monodromy action on the Picard group
is trivial, and the balancedness still holds with respect these fibers.
\end{exam}

\section{Toric varieties}
\label{sect:toric}

Manin's conjecture for toric varieties was settled by Batyrev and Tschinkel
via harmonic analysis on the associated adele groups in \cite{BT-0} and \cite{BT-general}.
Implicitly,  \cite{BT-general} established a version of balancedness.
Here, we will use MMP to determine balanced line bundles. We expect that these
techniques would also be applicable to some non-equivariant varieties.
We refer to \cite{fujino-2} for details concerning toric Mori theory,
though most of properties we use hold formally for Mori dream spaces.

We start by recalling basic facts regarding toric Mori theory, which also 
hold for all Mori dream spaces (see \cite[Section 1]{MDS}):

\begin{prop}[$D$-Minimal Model Program]
\label{prop:MMPwrtD}
Let $X$ be a $\bQ$-factorial projective toric variety and $D$ a $\bQ$-divisor.
Then the minimal model program with respect to $D$ runs, i.e.,
\begin{enumerate}
\item for any extremal ray $R$ of ${\mathrm{NE}}_1(X)$, 
there exists the contraction morphism $\varphi_R$;
\item for any small contraction $\varphi_R$ of a $D$-negative extremal ray $R$, 
the $D$-flip $\psi : X \dashrightarrow X^+$ exists;
\item any sequence of $D$-flips terminates in finite steps;
\item and every nef line bundle is semi-ample.
\end{enumerate}
\end{prop}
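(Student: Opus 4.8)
The plan is to derive all four assertions from the combinatorial structure of a $\bQ$-factorial projective toric variety $X$ with defining fan $\Sigma$, invoking the toric Minimal Model Program of \cite{fujino-2} (the analogous statements holding formally for Mori dream spaces, \cite[Section~1]{MDS}). The key structural input is the toric cone theorem: $\mathrm{NE}_1(X)$ is a rational polyhedral cone generated by the classes of torus-invariant curves, each corresponding to a wall (a codimension-one cone) $\tau \in \Sigma$. Hence every extremal ray $R$ is spanned by such an invariant curve $C_\tau$, and the whole discussion reduces to operations on the fan $\Sigma$.

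For (4), I would use that on a complete toric variety every nef line bundle is base-point free: a nef class corresponds to an upper-convex integral support function on $\Sigma$, and such a function defines a torus-equivariant morphism to projective space, so the nef divisor is semi-ample. Granting (4), assertion (1) follows from the usual supporting-divisor argument. Since $\mathrm{NE}_1(X)$ is rational polyhedral and $R$ is extremal, one chooses a nef class $H$ with $H\cdot R = 0$ that is strictly positive on $\mathrm{NE}_1(X)\setminus R$; by (4) the class $H$ is semi-ample, and the Stein factorization of the associated morphism is the contraction $\varphi_R$, which contracts precisely the curves with class in $R$. Combinatorially this amounts to removing the wall $\tau$ from $\Sigma$.

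For (2), a small contraction $\varphi_R$ is one whose exceptional locus has codimension at least two; the $D$-flip $X^+$ is then obtained by re-subdividing the region of $\Sigma$ affected by collapsing $\tau$, using the opposite, $D$-positive, subdivision. This combinatorial modification always exists for toric data, so (2) holds; I would cite \cite{fujino-2} for the precise construction and for the fact that $X^+$ is again a $\bQ$-factorial projective toric variety.

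The substantive point is (3), termination, which I expect to be the main obstacle, as is usual in the Minimal Model Program. Here the toric setting is far more tractable than the general case. A $D$-flip is an isomorphism in codimension one, so it alters only the higher-dimensional cone structure of $\Sigma$ and leaves the ray set $\Sigma(1)$ unchanged. Since there are only finitely many fans supported on $|\Sigma|$ whose rays lie in the fixed finite set $\Sigma(1)$, only finitely many toric models occur in the sequence; and because the $D$-MMP is monotone — the flipped curves become $D$-positive and are never flipped again, so no flip inverts an earlier one — no model is repeated, and the sequence must terminate. Equivalently, one can argue through the finiteness of the Mori chamber decomposition of the movable cone of the Mori dream space $X$, each $D$-flip corresponding to crossing a single wall of this finite fan of chambers (\cite[Section~1]{MDS}).
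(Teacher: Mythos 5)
Your proposal is correct in substance, but it takes a different route from the paper: the paper gives no argument at all, simply deferring to Fujino (Theorems 4.5, 4.8, 4.9 and Proposition 4.6 of \cite{fujino-2}), with the remark that the statements hold formally for Mori dream spaces \cite[Section~1]{MDS}. What you have written is essentially an unpacking of what those cited results prove: (4) via the fact that a nef class on a complete toric variety corresponds to an upper-convex support function and is hence basepoint free; (1) via the toric cone theorem plus a semi-ample supporting divisor and Stein factorization; (2) via the combinatorial re-subdivision (Reid's description of toric flips). The one place where your argument is thinner than it should be is termination: since $\bQ$-flips are isomorphisms in codimension one, finiteness of fans with the fixed ray set $\Sigma(1)$ does reduce (3) to showing no model repeats, but the assertion that ``the flipped curves become $D$-positive and are never flipped again, so no flip inverts an earlier one'' does not by itself exclude a longer cycle returning to an earlier fan through different walls --- this is precisely the classical difficulty with naive termination arguments. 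To close it one should either apply the negativity lemma on a common resolution of a hypothetical cycle $X_1\dashrightarrow\cdots\dashrightarrow X_k=X_1$, where the pullbacks of the strict transforms of $D$ strictly decrease, or use the finite Mori-chamber decomposition you mention, which is the rigorous version of the monotonicity claim. With that step made precise, your sketch is a legitimate self-contained proof, whereas the paper's approach buys brevity at the cost of being a pure citation.
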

\begin{proof}
See Theorem 4.5, Theorem 4.8, Theorem 4.9, and Proposition 4.6 in \cite{fujino-2}.
\end{proof}

\begin{prop}[Zariski decomposition]
\label{prop:Zariski decomposition}
Let $X$ be a $\bQ$-factorial projective toric variety
and $D$ a $\bQ$-effective divisor.
ApplyING $D$-MMP we obtain a birational contraction map
$f : X \dashrightarrow X'$,
with nef proper transform $D'$ of $D$.
Consider a common resolution:

\

\centerline{
\xymatrix{
&   \tilde{X}\ar[dl]_{\mu} \ar[d]_{\nu} \\
X\ar@{-->}[r]_f  & X'    
}
}

\

\noindent
Then 
\begin{enumerate}
\item $\mu^*D = \nu^*D' + E$, where $E$ is a $\nu$-exceptional effective $\bQ$-divisor;
\item the support of $E$ contains all divisors contracted by $f$;
\item if $g : \tilde{X} \rightarrow Y$ is the semi-ample fibration associated to $\nu^*D'$,
then for any $\nu$-exceptional effective Cartier divisor $E'$, the natural map
$$
\mathcal O_{Y} \rightarrow g_*\mathcal O(E'),
$$
is an isomorphism.
\end{enumerate}

\end{prop}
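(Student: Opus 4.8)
The plan is to read statements (1) and (2) as the assertion that the birational contraction $f$ produced by the $D$-MMP is \emph{$D$-negative}, and to deduce them by following the program step by step; statement (3) will then be a formal consequence of the fact that pushing forward an effective exceptional divisor creates no new sections. Throughout I write $E := \mu^*D - \nu^*D'$. Since $f$ is a birational contraction, $f^{-1}$ contracts no divisor, so every $\mu$-exceptional prime divisor of $\tilde X$ is also $\nu$-exceptional; comparing strict transforms and using $\nu_*\mu^*D = f_*D = D'$ gives $\nu_*E = D' - D' = 0$, so $E$ is $\nu$-exceptional. It then remains to show that $E \ge 0$ and that $\mathrm{Supp}(E)$ contains the strict transform of every prime divisor contracted by $f$, which are exactly (1) and (2).

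I would prove this by induction on the number of steps in the $D$-MMP. For a single divisorial contraction $\varphi_R : X_i \to X_{i+1}$ of a $D_i$-negative extremal ray $R$ with exceptional prime divisor $F$, the contraction theorem (Proposition~\ref{prop:MMPwrtD}(1)) yields $D_i = \varphi_R^*D_{i+1} + aF$ with $a > 0$, since $D_i\cdot R < 0$ while $F\cdot R < 0$; thus the step is $D$-negative and contributes the strictly positive multiple $aF$ of the contracted divisor to the exceptional part. A $D_i$-flip is small, so it contracts no divisor and contributes an effective exceptional term of codimension $\ge 2$. The remaining, and I expect hardest, point is that the composite of $D$-negative maps is again $D$-negative: on a common resolution one compares the two pullbacks and invokes the Negativity Lemma \cite[Lemma~3.6.2]{BCHM} to see that the accumulated exceptional divisor stays effective and that the coefficients of the previously contracted divisors do not cancel. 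Applied to the whole program this gives $E \ge 0$ with the asserted support. Alternatively one may identify $\nu^*D'$ with the positive part and $E$ with the negative part $N_\sigma(\mu^*D)$ of the divisorial Zariski decomposition, so that $E \ge 0$ holds by construction, using the formalism of \cite{L11B} already employed in Example~\ref{exam:numdimzero}.

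For (3), note first that $D'$ is nef on the $\bQ$-factorial projective toric variety $X'$, hence semi-ample by Proposition~\ref{prop:MMPwrtD}(4); let $\phi : X' \to Y$ be its semi-ample fibration, so that $D' = \phi^*A$ for an ample $A$ and $\phi_*\mathcal O_{X'} = \mathcal O_Y$. Since $\nu_*\mathcal O_{\tilde X} = \mathcal O_{X'}$, the projection formula shows that $\nu^*D' = (\phi\circ\nu)^*A$ has the same section ring as $D'$, so the semi-ample fibration of $\nu^*D'$ is $g = \phi\circ\nu$, and in particular $g_*\mathcal O_{\tilde X} = \phi_*\nu_*\mathcal O_{\tilde X} = \mathcal O_Y$. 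The key local fact is that for an effective $\nu$-exceptional Cartier divisor $E'$ one has $\nu_*\mathcal O_{\tilde X}(E') = \mathcal O_{X'}$: a rational function $h$ with $\mathrm{div}(h)+E' \ge 0$ is regular away from $\nu(\mathrm{Supp}(E'))$, which has codimension $\ge 2$ in the normal variety $X'$, hence is regular everywhere, so allowing poles along the exceptional locus produces no new sections (exactly as in the proof of Proposition~\ref{prop:alpha}). Pushing forward in two stages then gives $g_*\mathcal O_{\tilde X}(E') = \phi_*\nu_*\mathcal O_{\tilde X}(E') = \phi_*\mathcal O_{X'} = \mathcal O_Y$, and since $g_*\mathcal O_{\tilde X} = \mathcal O_Y$ as well, the natural map $\mathcal O_Y \to g_*\mathcal O_{\tilde X}(E')$ is an isomorphism.
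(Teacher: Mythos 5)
Your argument is correct and follows the same route the paper takes: its proof of (1) and (2) is simply an appeal to the Negativity Lemma applied along the steps of the $D$-MMP (citing Fujino's Lemma~4.10 and Theorem~5.4), which is exactly what your step-by-step induction carries out explicitly. Your proof of (3), via $\nu_*\mathcal O_{\tilde X}(E')=\mathcal O_{X'}$ on the normal variety $X'$ and the factorization $g=\phi\circ\nu$, fills in details the paper leaves entirely to the reference.
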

\begin{proof}
The assertions (1) and (2) follow from the Negativity lemma (see \cite[Lemma 4.10]{fujino-2}).
Also see \cite[Theorem 5.4]{fujino-2}.
\end{proof}

The invariant $b(X, L)$ can be characterized in terms of Zariski decomposition
of $a(X, L)L + K_X$:
\begin{prop}
\label{prop:beta_MDS}
Let $X$ be a $\bQ$-factorial projective toric variety
and $D$ an effective $\bQ$-divisor on $X$.
Suppose that
\begin{enumerate}
\item $D = P+N$, where $P$ is a nef and $N \geq 0$;
\item let $g : X \ra Y$ be the semi-ample fibration associated to $P$.
For any effective Cartier divisor $E$ which is supported by $\textnormal{Supp}(N)$,
the natural map
$$
\mathcal O_Y \ra g_* \mathcal O(E),
$$
is an isomorphism.
\end{enumerate}
Then the minimal extremal face of $\Lambda_{\eff}(X)$ containing $D$ is 
generated by vertical divisors of $g$ and components of $N$.
\end{prop}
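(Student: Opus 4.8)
The plan is to prove the equality $F=G$, where $F$ is the minimal extremal face of $\Lambda_{\eff}(X)$ containing $D$ and $G$ is the cone generated by the classes of the $g$-vertical prime divisors together with the components of $N$; this generalizes Proposition~\ref{prop:fibration}, which is the case $N=0$. Since $X$ is toric and $P$ is nef, Proposition~\ref{prop:MMPwrtD}(4) guarantees that $P$ is semi-ample, so $g\colon X\ra Y$ is a genuine morphism with $P=g^*A$ for an ample $\bQ$-divisor $A$ on $Y$. Moreover $\Lambda_{\eff}(X)$ is rational polyhedral and generated by the torus-invariant prime divisors, and we may take $D$, hence $N$, to be torus-invariant. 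I would establish the two inclusions $G\subseteq F$ and $F\subseteq G$ separately.

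The inclusion $G\subseteq F$ is the easy half and uses only the face property. Because $F$ is a face and $D=P+N$ is a sum of two pseudo-effective classes, both $P$ and $N$ lie in $F$; writing $N=\sum_j e_jN_j$ with $e_j>0$ then forces each component $N_j\in F$. For a $g$-vertical prime divisor $M$ I would choose $m\gg0$ and an effective $H'\in|mA|$ whose support contains $g(M)\subsetneq Y$; then $g^*H'\sim mP$ is effective, supported on vertical divisors, and contains $M$ with positive coefficient. As $[mP]\in F$ and $F$ is a face, splitting $g^*H'$ into $M$ and the (effective) remainder shows $M\in F$. Thus every generator of $G$ lies in $F$.

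For the reverse inclusion $F\subseteq G$ I would realize $G$ as a supported face containing $D$ and invoke the minimality of $F$. I restrict to a general fibre $\iota\colon X_y\hookrightarrow X$, a $\bQ$-factorial projective toric variety whose invariant prime divisors are the restrictions of the $g$-horizontal primes of $X$. Here condition (2) — which is exactly the conclusion of Proposition~\ref{prop:Zariski decomposition}(3) — enters: applied to the divisors $E=\sum_j m_jN_j$ supported on the horizontal components of $N$, together with cohomology and base change for $g_*\mathcal O(E)$, it forces $N|_{X_y}$ to be rigid. By \eqref{eqn:rigid} and the numerical-dimension-zero argument of Example~\ref{exam:numdimzero} (applied on $X_y$, or on a toric resolution of it), the classes $N_j|_{X_y}$ are linearly independent and span a supported face $F_y$ of $\Lambda_{\eff}(X_y)$. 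I then choose $\eta_y$ in the relative interior of the dual face, so that $\eta_y\cdot(N_j|_{X_y})=0$ while $\eta_y\cdot\Gamma>0$ for every other invariant prime $\Gamma$ of $X_y$, and set $\eta=\iota_*\eta_y$, for which $\eta\cdot\Gamma=\eta_y\cdot(\Gamma|_{X_y})$. A short case analysis gives $\eta\cdot\Gamma=0$ precisely for the vertical primes (disjoint from $X_y$) and for the horizontal components of $N$ (whose restrictions lie in $F_y$), and $\eta\cdot\Gamma>0$ for every remaining horizontal prime. Since $\eta$ is nonnegative on every generator of $\Lambda_{\eff}(X)$, it lies in $\overline{\text{NM}}_1(X)=\Lambda_{\eff}(X)^\vee$ by \cite{BDPP}, so $\{\eta=0\}\cap\Lambda_{\eff}(X)$ is a supported face; its generators are exactly the vertical primes and the components of $N$, i.e. it equals $G$. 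Finally $\eta\cdot D=\eta_y\cdot(P|_{X_y})+\eta_y\cdot(N|_{X_y})=0$ because $P|_{X_y}$ is trivial and $N|_{X_y}\in F_y$, so $D\in G$ and minimality of $F$ yields $F\subseteq G$.

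The main obstacle is the third paragraph: extracting rigidity of $N$ on the general fibre from condition (2), which requires a base-change statement for $g_*\mathcal O(E)$, and then upgrading rigidity to a \emph{supported} face via the divisorial Zariski decomposition on the (possibly singular) $\bQ$-factorial fibre. Alongside this one must carry out the bookkeeping that the horizontal invariant primes of $X$ restrict bijectively to the invariant primes of $X_y$ and that the restriction map $\NS(X)\ra\NS(X_y)$ is well behaved, exactly as in Proposition~\ref{prop:fibration} and \cite[Lemma 3.2.5]{KMM87}. Everything else is the face-property formalism already exploited in Propositions~\ref{prop:Beta} and~\ref{prop:fibration}.
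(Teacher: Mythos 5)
Your proposal is correct and follows essentially the same route as the paper: the easy inclusion via the face property, then restriction to a general fibre $X_y$, rigidity of $N|_{X_y}$ from condition (2), and a nef curve class supported on the fibre (your $\eta_y$, the paper's $\alpha\in\overline{\text{NM}}_1(X_y)$) pushed forward to $X$ to cut out a supported face consisting exactly of the $g$-vertical divisors and the components of $N$. The only cosmetic difference is that you spell out the dual-face bookkeeping and the base-change point for $g_*\mathcal O(E)$ slightly more explicitly than the paper does.
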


\begin{proof}
When $D$ is big, the assertion is trivial.
We may assume that $\dim(Y) < \dim(X)$.
Let $F$ be the minimal extremal face of $\Lambda_{\eff}(X)$ containing $D$.
Since $F$ is extremal, it follows that $F$ contains all vertical divisors of $g$
and components of $N$.

On the other hand, our assumption implies
that for general fiber $X_y$, $N|_{X_y}$ is a rigid divisor on $X_y$,
and its irreducible components generate an extremal face $F'$ of $\Lambda_{\eff}(X_y)$.
Let $\alpha \in \overline{\mathrm{NM}}_1(X_y)$ be a nef cycle supporting $F'$
and $F_\alpha := \{ \alpha = 0 \} \cap \Lambda_{\eff}(X)$.
Since $X_y$ is a general fiber, $\alpha \in \overline{\mathrm{NM}}_1(X)$
so that $F_\alpha$ is an extremal face. Since
$D\cdot \alpha =0$ and $F$ is 
minimal we have $F \subset F_\alpha$.
Let $D' \in F$ be an effective $\bQ$-divisor. Since
$D' \cdot \alpha = 0$, $D'$ is a sum of vertical divisors of $g$
and components of $N$;  our assertion follows.
\end{proof}

\begin{prop}
\label{prop:weakbalance_toric}
Let $X$ be a projective toric variety and $Y$ an 
equivariant compactification of a subtorus of codimension one (possibly singular).
Let $L$ be a big line bundle on $X$.
Then $L$ is weakly balanced with respect to $Y$.
\end{prop}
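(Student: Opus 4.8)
The plan is to realise $Y$ as a general fibre of a toric fibration over $\bP^1$ and then compare $a$ and $b$ by restricting divisor classes, tracking the minimal supported faces exactly as in Theorem~\ref{thm:subgroup}; the new feature here is that the adjoint divisor $a(X,L)L+K_X$ need not be rigid, so a nontrivial nef part must be allowed. Concretely, write $T$ for the torus. The codimension-one subtorus is $T'=\ker(\chi)$ for a primitive character $\chi\in\mathfrak X(T)^*$, and $Y=\overline{T'}$ is the closure of the fibre over $1$ of the rational map $\pi_\chi:X\dashrightarrow\bP^1$ induced by $\chi$. Since $1$ is a general point of $\bP^1$, the divisor $Y$ is a general fibre, and its $T$-translates (the fibres of $\pi_\chi$ over $\mathbb G_m$) sweep out $X$. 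Applying a $T$-equivariant resolution of the indeterminacy of $\pi_\chi$ and of $X$, and invoking the birational invariance of the invariants (Propositions~\ref{prop:alpha} and~\ref{prop:Beta}), I may assume $X$ is smooth, $\pi:=\pi_\chi:X\to\bP^1$ is a morphism, and $Y$ is a smooth general fibre; in particular $Y$ is a smooth projective toric variety for $T'$. Because $Y=\pi^*[\mathrm{pt}]$ its normal bundle is trivial, so adjunction gives $K_Y=K_X|_Y$.

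Next I would dispatch bigness and the $a$-inequality. Writing $L\equiv A+E$ with $A$ ample and $E\geq 0$, a general fibre $Y$ is not contained in $\mathrm{Supp}(E)$, so $L|_Y\equiv A|_Y+E|_Y$ is the sum of an ample and an effective class, hence big. For the Fujita invariant, I choose a torus-invariant effective representative of $a(X,L)L+K_X$; it is supported on the boundary $\cup_\rho D_\rho$, which a general $Y$ is not contained in, so its restriction to $Y$ is effective. Using $K_Y=K_X|_Y$, this restriction equals $a(X,L)L|_Y+K_Y$, whence $a(Y,L|_Y)\leq a(X,L)$.

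The main work is the $b$-inequality under the assumption $a:=a(Y,L|_Y)=a(X,L)$; set $D:=aL+K_X$, so that $D|_Y=aL|_Y+K_Y$. Let $r:\NS(X)\to\NS(Y)$ be restriction. Since $\Lambda_{\eff}(X)=\mathrm{span}$-of-rays cone generated by the $D_\rho$ and each $D_\rho|_Y$ is effective (possibly zero), we have $r(\Lambda_{\eff}(X))\subseteq\Lambda_{\eff}(Y)$. I claim $r$ is surjective: by Lemma~\ref{lemm:domin}, applicable since $T\to T/T'=\mathbb G_m$ has a section (Remark~\ref{rema:section}), the boundary divisors of $Y$ are exactly the nonempty intersections $D_\rho\cap Y$, and these classes generate $\NS(Y)$. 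Now let $F$ (resp.\ $F'$) be the minimal supported face of $\Lambda_{\eff}(X)$ (resp.\ $\Lambda_{\eff}(Y)$) containing $D$ (resp.\ $D|_Y$). Choosing $\eta'\in\overline{\text{NM}}_1(Y)$ with $F'=\{\eta'=0\}\cap\Lambda_{\eff}(Y)$ and pushing forward along $i:Y\hookrightarrow X$, the projection formula $E\cdot i_*\eta'=E|_Y\cdot\eta'$ together with $r(\Lambda_{\eff}(X))\subseteq\Lambda_{\eff}(Y)$ shows $i_*\eta'\in\overline{\text{NM}}_1(X)$ and $D\cdot i_*\eta'=0$; minimality of $F$ gives $F\subseteq\{i_*\eta'=0\}\cap\Lambda_{\eff}(X)$. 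Since a face of the polyhedral cone $\Lambda_{\eff}(X)$ is generated by the $D_\rho$ it contains, and each such $D_\rho|_Y$ is effective and pairs to zero with $\eta'$, we get $r(F)\subseteq F'$. Composing $r$ with $\NS(Y)\twoheadrightarrow\NS(Y)/\mathrm{span}(F')$ then produces a surjection $\NS(X)/\mathrm{span}(F)\twoheadrightarrow\NS(Y)/\mathrm{span}(F')$, so that $b(Y,L|_Y)=\mathrm{codim}\,F'\leq\mathrm{codim}\,F=b(X,L)$.

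The step I expect to be the main obstacle is the face comparison of the last paragraph: verifying that the boundary divisors of $Y$ are precisely the restrictions $D_\rho\cap Y$ (so that $r$ is surjective), and that the pushed-forward class $i_*\eta'$ is movable on $X$ and forces $r(F)\subseteq F'$. The first point rests on the rational-section hypothesis through Lemma~\ref{lemm:domin}, and the second on the duality $i_*\overline{\text{NM}}_1(Y)\subseteq\overline{\text{NM}}_1(X)$, which holds precisely because restriction carries pseudo-effective classes to pseudo-effective classes. One must also be careful that the reduction to a smooth morphism with smooth general fibre, via equivariant resolution, does not disturb the identification of $Y$ with $\overline{T'}$, which is guaranteed because $1\in\bP^1$ is a general point lying away from the indeterminacy locus of $\pi_\chi$.
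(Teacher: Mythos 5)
Your proof is correct, but it follows a genuinely different route from the paper's. The paper keeps $Y$ in place as a divisor of $X$ (which may well be nef or ample, cf.\ Example~\ref{exam:no_control}), so adjunction produces the extra normal-bundle term: it works with $D=a(X,L)L+K_X+Y$, runs the toric $D$-MMP to get a Zariski decomposition $D=P+N$ (Propositions~\ref{prop:MMPwrtD} and~\ref{prop:Zariski decomposition}), passes to the semi-ample fibration $g$ of the positive part, and invokes Proposition~\ref{prop:beta_MDS} to identify the minimal supported face as generated by $g$-vertical divisors and components of $N$, before building the surjection on quotients. You instead resolve equivariantly so that the quotient character gives a morphism $\pi:X\to\bP^1$ whose fibre over $1$ is (the strict transform of) $Y$; then $\mathcal O_X(Y)|_Y$ is trivial, $K_Y=K_X|_Y$, the normal-bundle term disappears, and the $b$-comparison becomes a direct convex-geometry argument: push forward the supporting functional $\eta'$ of $F'$, use duality ($i_*\overline{\text{NM}}_1(Y)\subset\overline{\text{NM}}_1(X)$ because restriction preserves pseudo-effectivity) and the polyhedrality of $\Lambda_{\eff}$ generated by boundary divisors to get $r(F)\subseteq F'$, and conclude from surjectivity of $r$ via Lemma~\ref{lemm:domin}. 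Your route avoids the MMP machinery entirely and runs parallel to Theorem~\ref{thm:subgroup}, at the cost of the birational bookkeeping (justified by Propositions~\ref{prop:alpha} and~\ref{prop:Beta}) and of checking that the fibre of $\pi$ over $1$ is reduced and irreducible and equals $\overline{T'}$ --- which does hold, since any boundary divisor mapping to a point of $\bP^1$ must map to a $\bG_m$-fixed point, and all fibres over $\bG_m$ are $T$-translates of one another. The paper's approach buys the finer structural description of the minimal face via the semi-ample fibration, which it reuses in Propositions~\ref{prop:notbalance_toric} and~\ref{prop:rigid_toric}. Two small points to tighten: choose the effective representatives of $a(X,L)L+K_X$ and of the divisor $E$ in Kodaira's lemma to be torus-invariant, so that the specific fibre over $1$ (not just a ``general'' one) avoids their supports; and state the surjectivity of $r$ over $\bQ$, since $D_\rho|_Y$ may be a multiple $m_\rho(D_\rho\cap Y)$ with $m_\rho>1$.
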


\begin{proof}
Let $M$ be the class of $\mathcal O_X(Y)$. 
Applying an equivariant embedded resolution of singularities, if necessary,
we may assume that $X$ and $Y$ are smooth or at least $\bQ$-factorial terminal.
Due to a group action of a torus, $Y$ is not rigid, so that
$$
a(X, L)L|_Y + K_Y = (a(X, L)L + K_X)|_Y + M|_Y \in \Lambda_{\eff}(Y).
$$
Note that $a(X, L)L + K_X$ is an effective $\bQ$-divisor on $X$.
Thus we have
$$
a(Y, L|_Y) \leq a(X, L).
$$
Suppose that $a(Y, L|_Y) = a(X, L) =: a$.
Let $D = aL + K_X + Y$
and consider the Zariski decomposition of $D$:
\

\centerline{
\xymatrix{
& &   \tilde{X}\ar[dl]_{\mu} \ar[d]_{\nu} & \!\!\!\!\!\!\!\!\!\!\!\!\!\!\!\!\!\!\!\! \supset  \tilde{Y}\\
D \subset \!\!\!\!\!\!\!\!\!\!\!\!\!\!\!\!\!\!& X\ar@{-->}[r]_f  & X' &  \!\!\!\!\!\!\!\!\!\!\!\!\!\!\!\! \supset D',   
}
}

\

\noindent
where $D'$ is the strict transform of $D$, which is nef,
and $\tilde{Y}$ is the strict transform of $Y$.
We may assume that both $\tilde{X}$ and $\tilde{Y}$ are smooth.
Let $F$ be the minimal extremal face of $\Lambda_{\eff}(\tilde{X})$ containing
$$
a\mu^*L+K_{\tilde{X}} +\tilde{Y}.
$$
Since $a\mu^*L+K_{\tilde{X}} \in F$, it follows that
$
\mathrm{codim} (F) \leq b(X, L).
$
Since $X$ has only terminal singularities, we have
$$
a\mu^*L+K_{\tilde{X}} +\tilde{Y} = a\mu^*L+\mu^*K_X + \sum_i d_iE_i +\tilde{Y},
$$ 
where $d_i$'s are positive integers
and $E_i$'s are $\mu$-exceptional divisors.
It follows that $F$ is the minimal extremal face containing $\mu^*D$
and all $\mu$-exceptional divisors.
Let $g : \tilde{X} \ra B$ be the semi-ample fibration associated to $\nu^*D'$.
Note that $\dim(B) < \dim (\tilde{X})$
since $D$ is not big.
Proposition \ref{prop:beta_MDS} implies that
$F$ is generated by all vertical divisors of $g$ and all $\nu$-exceptional divisors.
We denote the vector space, generated by $F$, by $V_F$.

Let $F'$ be the minimal extremal face of $\Lambda_{\eff}(\tilde{Y})$
containing 
$$
a\mu^*L|_{\tilde{Y}} + K_{\tilde{Y}} = (a\mu^*L + K_{\tilde{X}} + \tilde{M})|_{\tilde{Y}},
$$
where $\tilde{M}$ be the class of $\mathcal O_{\tilde{X}}(\tilde{Y})$.
Then $F'$ is also the minimal extremal face containing $\mu^*D|_{\tilde{Y}}$
and all components of $(E_i \cap \tilde{Y})$'s
so that $F'$ is the minimal extremal face containing $\nu^*D'|_{\tilde{Y}}$
and all components of $(G_j \cap \tilde{Y})$'s,
where $G_j$'s are all $\nu$-exceptional divisors.
In particular, $F'$ contains all vertical divisors of $g|_{\tilde{Y}} : \tilde{Y} \ra H = g(\tilde{Y})$.
Since $\nu^*D'$ admits a section vanishing along $\tilde{Y}$,
$H$ is a Weil divisor of $B$, which is a subtoric variety.
Let $V' \subset \NS(\tilde{Y})$ be a vector space generated by 
vertical divisors of $g|_{\tilde{Y}}$ and components of $(G_j \cap \tilde{Y})$'s.
Then $b(Y, L) \leq \mathrm{codim}(V')$.
Consider the following restriction map:
$$
\Phi : \NS(\tilde{X}) / V_F \ra \NS(\tilde{Y})/ V'.
$$
We claim that $\Phi$ is surjective.
Let $N$ be an irreducible component of the boundary divisor of $\tilde{Y}$
which dominates $H$.
There exists an irreducible component $N'$ of the boundary divisor of $\tilde{X}$
such that $N'$ contains $N$.
Then $N'$ also dominates $B$.
As in the proof of Lemma~\ref{lemm:domin},
$$
N' \cap \tilde{Y} = mN + (\text{vertical divisors of $g|_{\tilde{Y}}$}).
$$
Our claim follows from this.
Hence
$$
b(Y, L) \leq \mathrm{codim}(V') \leq \mathrm{codim}(V_F) \leq b(X, L).
$$
\end{proof}

\begin{prop}
\label{prop:notbalance_toric}
Let $X$ be a $\bQ$-factorial terminal projective toric variety
and $L$ a big line bundle on $X$.
Suppose that the positive part of Zariski decomposition of $D:= a(X, L)L+K_X$ is nontrivial.
Then $L$ is not balanced.
\end{prop}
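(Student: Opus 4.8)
The plan is to show that $L$ fails to be balanced by exhibiting a family of subvarieties---the general fibers of the fibration attached to the positive part of $D$---that dominate $X$ and realize equality in both invariants, exactly as in \emph{Case 2} of the del Pezzo analysis and in the proof of Proposition~\ref{prop:weakbalance_toric}. Since $X$ is toric it is uniruled, so $K_X$ is not pseudo-effective and $a := a(X,L) > 0$; moreover $\Lambda_{\eff}(X)$ is rational polyhedral, $a$ is rational, and $D = aL + K_X$ is an effective $\bQ$-divisor on the boundary of $\Lambda_{\eff}(X)$, so in particular $D$ is not big. First I would run the $D$-minimal model program as in Proposition~\ref{prop:Zariski decomposition}, producing a birational contraction $f : X \dashrightarrow X'$ and a common resolution $\mu : \tilde X \to X$, $\nu : \tilde X \to X'$ with $\mu^* D = \nu^* D' + E$, where $D'$ is nef and $E \geq 0$ is $\nu$-exceptional. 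As $X$ has terminal singularities, writing $K_{\tilde X} = \mu^* K_X + \sum_i d_i E_i$ with $d_i > 0$ gives
$$ a\mu^* L + K_{\tilde X} = \nu^* D' + \bar N, \qquad \bar N := E + \textstyle\sum_i d_i E_i \geq 0, $$
a decomposition into a nef positive part $\nu^* D'$ and an effective negative part $\bar N$. The hypothesis that the positive part is nontrivial means $D' \neq 0$, and since $D$ is not big neither is $D'$; by Proposition~\ref{prop:MMPwrtD}(4) the nef divisor $D'$ is semi-ample, so $\nu^* D'$ defines a semi-ample fibration $g : \tilde X \to B$ with connected fibers and $1 \leq \dim B < \dim \tilde X$.

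Next I would analyze a general fiber $\tilde F$ of $g$. By generic smoothness $\tilde F$ is a smooth toric variety with trivial normal bundle, so $K_{\tilde F} = K_{\tilde X}|_{\tilde F}$, while $\nu^* D'|_{\tilde F} = 0$. Restricting the displayed decomposition gives $a(\mu^*L)|_{\tilde F} + K_{\tilde F} = \bar N|_{\tilde F} \geq 0$, hence $a(\tilde F, (\mu^*L)|_{\tilde F}) \leq a$. Writing $\mu^* L = A + \Gamma$ with $A$ ample and $\Gamma \geq 0$ (Kodaira's lemma) and taking $\tilde F \not\subset \mathrm{Supp}(\Gamma)$ shows $(\mu^* L)|_{\tilde F}$ is big. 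The key input is that $\bar N|_{\tilde F}$ is \emph{rigid}: condition (2) of Proposition~\ref{prop:beta_MDS}, supplied by Proposition~\ref{prop:Zariski decomposition}(3), asserts that $\mathcal O_B \to g_* \mathcal O(E')$ is an isomorphism for effective Cartier $E'$ supported on $\mathrm{Supp}(\bar N)$, and specializing to the general fiber yields $h^0(\tilde F, m\bar N|_{\tilde F}) = 1$ for all $m \geq 1$. Given rigidity I would deduce $a(\tilde F, (\mu^*L)|_{\tilde F}) = a$: if some $a' < a$ had $a'(\mu^*L)|_{\tilde F} + K_{\tilde F}$ pseudo-effective, then $\bar N|_{\tilde F} = (a'(\mu^* L)|_{\tilde F} + K_{\tilde F}) + (a-a')(\mu^*L)|_{\tilde F}$ would be big, contradicting \eqref{eqn:rigid}.

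Then I would compare the $b$-invariants. By Proposition~\ref{prop:beta_MDS} the minimal extremal face of $\Lambda_{\eff}(\tilde X)$ containing $a\mu^* L + K_{\tilde X}$ spans a subspace $V_{\tilde X}$ generated by the $g$-vertical divisors and the components of $\bar N$, so $b(X,L) = b(\tilde X, \mu^* L) = \mathrm{codim}\, V_{\tilde X}$ by Proposition~\ref{prop:Beta}; likewise, rigidity of $\bar N|_{\tilde F}$ makes its components a basis of an extremal face with span $V_{\tilde F}$, so $b(\tilde F, (\mu^*L)|_{\tilde F}) = \mathrm{codim}\, V_{\tilde F}$. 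I would then show that the restriction $r : \NS(\tilde X) \to \NS(\tilde F)$ induces an isomorphism $\NS(\tilde X)/V_{\tilde X} \xrightarrow{\sim} \NS(\tilde F)/V_{\tilde F}$: it is well defined because $g$-vertical divisors and vertical components of $\bar N$ restrict to $0$ while horizontal components of $\bar N$ restrict into $V_{\tilde F}$; it is surjective because every torus-invariant divisor of $\tilde F$ is the restriction of a boundary divisor of $\tilde X$, as in Lemma~\ref{lemm:domin}; and it is injective because $\ker r = \NS_g(\tilde X) \subset V_{\tilde X}$ by the relative Picard sequence for the toric fibration (cf.~\cite[Lemma~3.2.5]{KMM87}). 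Hence $b(\tilde F, (\mu^*L)|_{\tilde F}) = b(X,L)$.

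Finally, setting $F := \mu(\tilde F) \subset X$ and invoking the resolution-invariance of the invariants (Propositions~\ref{prop:alpha} and \ref{prop:Beta}) gives $a(F, L|_F) = a(X,L)$ and $b(F, L|_F) = b(X,L)$ with $L|_F$ big and $F$ an irreducible uniruled subvariety of positive codimension. Thus $L$ is weakly balanced but not balanced with respect to $F$, and as the base point of $g$ varies the fibers $F$ cover a dense open subset of $X$; so for any proposed exceptional set $Z \subsetneq X$ a general $F$ lies outside $Z$, and by Definition~\ref{defn:balanced} the bundle $L$ is not balanced on $X$. The main obstacle is the $b$-comparison: correctly identifying $V_{\tilde X}$ and $V_{\tilde F}$ through the Zariski decomposition and verifying that the N\'eron--Severi restriction is an isomorphism, for which the rigidity of $\bar N$ on the general fiber from Proposition~\ref{prop:beta_MDS} and the toric relative Picard sequence are the essential ingredients.
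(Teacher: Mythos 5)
Your proposal is correct and follows essentially the same route as the paper: pass to a model realizing the Zariski decomposition of $a(X,L)L+K_X$, take the semi-ample fibration of the positive part, use rigidity of the negative part on a general fiber to get equality of the Fujita invariants, and compare $b$-invariants via an isomorphism of N\'eron--Severi quotients, concluding from the fact that the fibers dominate $X$. The only (minor) divergence is the justification of injectivity of that quotient map, where the paper uses the product structure $g^{-1}(T)\cong T\times X_y$ over the big torus $T$ of the base rather than your appeal to the relative Picard sequence.
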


\begin{proof}
After blowing up, if necessary,
we may assume that $D$ itself admits a Zariski decomposition
$$
D = P + N,
$$
where $P$ is a nef $\bQ$-divisor and $N \geq 0$ is the negative part.
Let $g : X \ra Y$ be the semi-ample fibration associated to $P$.
We consider a general fiber $X_y$ of $g$.
Since $a(X, L)L|_{X_y} + K_{X_y} = N|_{X_y}$ is a rigid effective divisor,
we conclude that $a(X, L) = a(X_y, L|_{X_y})$.
Let $V \subset \NS(X)$ be the vector space generated by
vertical divisors of $g$ and components of $N$
and $V' \subset \NS(X_y)$ the vector space generated by
components of $N|_{X_y}$.
The restriction map
$$
\Phi : \NS(X) / V \ra \NS (X_y) / V',
$$
is surjective, by
Lemma~\ref{lemm:domin}.
On the other hand, let $T$ be the big torus of $Y$.
Then the preimage $g^{-1}(T)$ of $T$ is a product of $T$
and a general fiber $X_y$.
It follows that $\Phi$ is injective.
Thus we have
$$
b(X, L) = b(X_y, L|_{X_y}).
$$
Hence $L$ is not balanced on $X$.
\end{proof}

An alternative proof of Theorem \ref{thm:subgroup} for toric varieties is provided below:
\begin{prop}
\label{prop:rigid_toric}
Let $X$ be a $\bQ$-factorial terminal projective toric variety,
$L$ a big line bundle on $X$,
and $Y$ an equivariant compactification of a subtorus of codimension one (possibly singular).
Suppose that the positive part of the Zariski decomposition of $a(X, L)L + K_X$ is trivial.
Then $L$ is balanced with respect to $Y$.
\end{prop}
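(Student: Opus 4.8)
The plan is to run the argument of Theorem~\ref{thm:subgroup} in the toric setting, using the hypothesis on the Zariski decomposition as the substitute for rigidity of $a(X,L)L+K_X$, together with the fact that $Y$ moves in a pencil. First I would produce the relevant fibration: since $Y$ compactifies a codimension-one subtorus $T_Y\subset T$, the quotient $T/T_Y\cong \mathbb G_m$ gives a $T$-equivariant rational map $\pi\colon X\dashrightarrow X'$ onto a toric compactification $X'$ of $\mathbb G_m$. After an equivariant resolution of the indeterminacy of $\pi$ (applied to resolve $X$ and $Y$ simultaneously, as in Proposition~\ref{prop:weakbalance_toric}), I may assume $\pi$ is a morphism, $Y$ is a general and hence smooth fiber with trivial normal bundle $\mathcal N_{Y/X}$, and $M:=[\mathcal O_X(Y)]=\pi^*(\mathrm{pt})$ restricts to $0$ on $Y$. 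By adjunction the triviality of $\mathcal N_{Y/X}$ gives $K_Y=K_X|_Y$, so writing $a=a(X,L)$ and $D:=aL+K_X$ we obtain $aL|_Y+K_Y=D|_Y$; as $D$ is effective this recovers $a(Y,L|_Y)\le a$, consistent with weak balancedness (Proposition~\ref{prop:weakbalance_toric}). If the inequality is strict, $L$ is balanced and we are done, so I assume $a(Y,L|_Y)=a$ and must prove $b(Y,L|_Y)<b(X,L)$.

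Next I would compute $b(X,L)$. By hypothesis the positive part of the Zariski decomposition of $D$ is trivial, so by Proposition~\ref{prop:Zariski decomposition} the $D$-MMP produces a birational contraction $f\colon X\dashrightarrow Z$ contracting the whole of $D=\sum_i e_iE_i$ (equivalently $\nu(D)=0$, cf.\ Remark~\ref{rema:numerical}). The $E_i$ are rigid invariant boundary divisors, the minimal supported face $F$ of $\Lambda_{\eff}(X)$ containing $D$ equals $\oplus_i\mathbb R_{\ge 0}E_i$ by Example~\ref{exam:numdimzero}, and $V_F:=\mathrm{span}(E_i)$ satisfies $b(X,L)=\rk\,\NS(X)-\dim V_F$. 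The Negativity lemma, exactly as in Proposition~\ref{prop:Beta}, identifies $\NS(X)/V_F$ with $\NS(Z)$ via $f_*$. On the $Y$ side, $D|_Y=\sum_i e_i(E_i\cap Y)$ has positive coefficients, so by extremality the minimal supported face $F'$ of $\Lambda_{\eff}(Y)$ containing $aL|_Y+K_Y=D|_Y$ contains every component $E_i\cap Y$; hence $V':=\mathrm{span}_i(E_i\cap Y)$ satisfies $b(Y,L|_Y)\le \mathrm{codim}\,V'$.

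I would then compare the two faces through the restriction map $\Phi\colon \NS(X)/V_F\to \NS(Y)/V'$. Surjectivity follows as in Theorem~\ref{thm:subgroup}: $Y$ is toric, so $\NS(Y)$ is generated by its invariant boundary divisors, and by Lemma~\ref{lemm:domin} (whose hypothesis holds because $T_Y$ is solvable, Remark~\ref{rema:section}) every boundary divisor of $Y$ is the restriction $D_\alpha\cap Y$ of a boundary divisor of $X$ meeting $Y$; thus $\NS(X)\to\NS(Y)$ is surjective and carries $V_F$ into $V'$. For the kernel, $\pi^*\NS(X')=\mathbb R\,M$ lies in $\ker\Phi$ because $M|_Y=0$, and it is nonzero modulo $V_F$: under $\NS(X)/V_F\cong \NS(Z)$ the class $M=[Y]$ maps to $f_*Y$, the strict transform of the moving divisor $Y$, which is a nonzero effective divisor (it is not among the contracted rigid divisors $E_i$) and hence nonzero in $\NS(Z)$ since $\Lambda_{\eff}(Z)$ is strict. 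Therefore $\Phi$ has a nontrivial kernel, and
\[
b(Y,L|_Y)\le \mathrm{codim}\,V'=\dim\mathrm{im}\,\Phi\le b(X,L)-\dim\ker\Phi\le b(X,L)-1,
\]
which yields the strict inequality and proves that $L$ is balanced with respect to $Y$.

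I expect the main obstacle to be the nonvanishing of $[Y]$ in $\NS(X)/V_F$, i.e.\ verifying that the moving divisor $Y$ is genuinely independent of the contracted rigid locus $\mathrm{Supp}(D)$; this is precisely where the Zariski-decomposition hypothesis is essential — it is exactly what fails in Proposition~\ref{prop:notbalance_toric} — and where the identification $\NS(X)/V_F\cong \NS(Z)$ coming from the $D$-MMP does the real work. A secondary technical point is to carry out the resolution of $\pi$ compatibly with the toric and terminal structure, so that $Y$ is smooth with trivial normal bundle and the boundary combinatorics underlying Lemma~\ref{lemm:domin} and the surjectivity of $\Phi$ remain available.
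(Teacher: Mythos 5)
Your proof is correct, but it is organized along the lines of Theorem~\ref{thm:subgroup} rather than the route the paper actually takes for this proposition. The paper's proof is a two-line addendum to Proposition~\ref{prop:weakbalance_toric}: there the face $F$ is the minimal extremal face containing $a\mu^*L+K_{\tilde X}+\tilde Y$ (the class of $Y$ is folded into the adjoint divisor from the start), one already has $b(Y,L|_Y)\le \mathrm{codim}(V')\le \mathrm{codim}(V_F)\le b(X,L)$, and under the hypothesis that the positive part vanishes the last inequality becomes strict because $F$ contains both the rigid components of $a\mu^*L+K_{\tilde X}$ and the moving class $[\tilde Y]$, which cannot lie in their span. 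You instead take $V_F$ to be the span of the rigid components alone, so that $\mathrm{codim}(V_F)=b(X,L)$ exactly, and obtain strictness by exhibiting $[Y]=\pi^*(\mathrm{pt})$ as a nonzero element of the kernel of the surjective restriction map $\NS(X)/V_F\to \NS(Y)/V'$, using the $D$-MMP contraction $f:X\dashrightarrow Z$ to check that $f_*Y\neq 0$. The two arguments account for the same single extra dimension---the pencil in which $Y$ moves---on opposite sides of the restriction map (the paper puts it into $V_F$, you put it into $\ker\Phi$), and both rest on the same inputs: Lemma~\ref{lemm:domin} for surjectivity and the triviality of the positive part to pin down the face. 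Your version is self-contained but duplicates the bookkeeping of Proposition~\ref{prop:weakbalance_toric}; the paper's reuses that proposition wholesale. One step you pass over quickly (as does the paper) is the linear independence of the components $E_i$ of a class of numerical dimension zero, which is what makes $\dim V_F=n$; for your kernel argument, however, only the inclusion $V_F\subseteq\ker f_*$ is actually needed, and that is immediate.
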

\begin{proof}
We follow the notations in the proof of Proposition~\ref{prop:weakbalance_toric}.
We only need to explain why $b(Y, L|_Y) < b(X, L)$,
when $a(Y, L|_Y) = a(X, L)$.
Since $a\mu^*L + K_{\tilde{X}}$ is rigid,
it follows that 
$$
\mathrm{codim}(V_F) < b(X, L).
$$
Thus our assertion follows.
\end{proof}

\begin{coro}
\label{coro:canonical_toric}
Let $X$ be a $\bQ$-factorial terminal projective toric variety.
A big line bundle $L$ is balanced with respect to all toric subvarieties
if and only if $a(X, L)L + K_X$ is rigid.
\end{coro}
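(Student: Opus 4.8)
The plan is to assemble the corollary from the three toric results just proved, reading \emph{toric subvariety} as the closure of a coset of a subtorus, so that Propositions~\ref{prop:weakbalance_toric}, \ref{prop:rigid_toric}, and \ref{prop:notbalance_toric} all apply. First I would record the dictionary between the two phrasings of the hypothesis: for an effective $\bQ$-divisor $D$ on a $\bQ$-factorial projective toric variety with Zariski decomposition $D = P + N$ as in Proposition~\ref{prop:Zariski decomposition}, the divisor $D$ is rigid if and only if the positive part $P$ is trivial. If $P = 0$, then $D = N$ is $\nu$-exceptional and the base of the associated semi-ample fibration is a point, so part~(3) of Proposition~\ref{prop:Zariski decomposition} forces $\rH^0(\cO_X(nD)) = 1$ for all $n$; conversely, if $P \neq 0$ the fibration has positive-dimensional base, and pulling back sections of an ample class downstairs gives $\rH^0(\cO_X(nD)) > 1$ for $n \gg 0$. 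With this equivalence the corollary becomes exactly a statement about the triviality of $P$.

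The implication from balancedness to rigidity is the contrapositive of Proposition~\ref{prop:notbalance_toric}. If $a(X,L)L + K_X$ is not rigid then $P \neq 0$, and that proposition exhibits a general fiber $X_y$ of the semi-ample fibration of $P$ with $a(X_y, L|_{X_y}) = a(X,L)$ and $b(X_y, L|_{X_y}) = b(X,L)$. Since $g^{-1}(T) \cong T \times X_y$ for the big torus $T$ of the base, this fiber is the closure of a coset of the kernel subtorus, hence a toric subvariety, and the equalities violate the strict inequality required in Definition~\ref{defn:balanced}; thus $L$ fails to be balanced with respect to a toric subvariety.

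For the converse, assume $a(X,L)L + K_X$ is rigid and let $Y$ be an arbitrary toric subvariety, the closure of a subtorus $T_Y \subset T$ of codimension $c$. I would filter $T_Y = S_c \subset S_{c-1} \subset \cdots \subset S_0 = T$ by subtori with $\operatorname{codim} S_i = i$ and take closures to obtain a flag
$$
Y = Y_c \subset Y_{c-1} \subset \cdots \subset Y_0 = X,
$$
in which each $Y_i$ is a projective toric variety and a codimension-one subtorus compactification inside $Y_{i-1}$. Applying Proposition~\ref{prop:weakbalance_toric} at every link gives the lexicographic monotonicity $(a(Y,L|_Y), b(Y,L|_Y)) \le_{\mathrm{lex}} \cdots \le_{\mathrm{lex}} (a(Y_1, L|_{Y_1}), b(Y_1, L|_{Y_1}))$ and simultaneously propagates bigness of the restriction down the flag. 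The crucial point—designed to avoid tracking how rigidity behaves under restriction—is that I invoke Proposition~\ref{prop:rigid_toric} only at the top link: since $a(X,L)L + K_X$ is rigid, that proposition gives $(a(Y_1, L|_{Y_1}), b(Y_1, L|_{Y_1})) <_{\mathrm{lex}} (a(X,L), b(X,L))$ outright. Chaining the monotonicity with this strict step yields $(a(Y,L|_Y), b(Y,L|_Y)) <_{\mathrm{lex}} (a(X,L), b(X,L))$, so $L$ is balanced with respect to $Y$.

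The main obstacle I anticipate is the flag reduction itself: one must check that each $Y_i$ is a projective toric variety to which Proposition~\ref{prop:weakbalance_toric} genuinely applies, that $Y_1$ is a codimension-one subtorus compactification of $X$ so that Proposition~\ref{prop:rigid_toric} can be used with the rigidity hypothesis placed on $X$ alone, and that the possible singularities of the $Y_i$ cause no trouble, since both cited propositions already permit singular $Y$ and argue through equivariant resolution. The only other technical point is the clean verification of the rigid $\iff P = 0$ equivalence for $\bQ$-divisors, which I would handle as sketched above via Proposition~\ref{prop:Zariski decomposition}.
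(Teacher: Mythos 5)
Your proposal is correct and is essentially the derivation the paper intends: the corollary is stated without a separate proof as an immediate consequence of Proposition~\ref{prop:notbalance_toric} (the ``only if'' direction, witnessed by a general fiber of the semi-ample fibration of the positive part) and of Propositions~\ref{prop:rigid_toric} and \ref{prop:weakbalance_toric} (the ``if'' direction). Your flag reduction to codimension one, invoking Proposition~\ref{prop:rigid_toric} only at the top link so that the rigidity hypothesis is needed only on $X$ and chaining the weak lexicographic monotonicity of Proposition~\ref{prop:weakbalance_toric} below it, correctly supplies the detail the paper leaves implicit for toric subvarieties of higher codimension.
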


\begin{rema}
\label{rema:general}
Propositions \ref{prop:weakbalance_toric} and \ref{prop:rigid_toric}
hold when $X$ is $\bQ$-factorial terminal
and $Y$ a general {\em smooth} divisor.
The proofs work with small modifications.
However, we still do not know whether they hold for {\em singular} divisors.
\end{rema}

\begin{exam}
\label{exam:no_control}
Consider the standard action of $\mathbb G_m^3 = \{ (t_0, t_1, t_2) \}$ on $\bP^3$ by
$$
(t_0, t_1, t_2)\cdot (x_0:x_1:x_2:x_3) \mapsto (t_0x_0:t_1x_1:t_2x_2:x_3).
$$
Consider the subtorus
$$
M = \{ (t_0, t_1, (t_0t_1)^{-1}) \} \subset \mathbb G_m^3,
$$
and let $S$ be the equivariant compactification of $M$ defined by
$$
x_0x_1x_2 = x_3^3.
$$
This is a singular cubic surface with three isolated singularities of type $\mathsf A_2$.  
We denote them by $p_1, p_2, p_3\in \mathbb P^3$. Since they are fixed under the action of 
$\mathbb G_m^3$ on $\bP^3$, the blowup $B := \Bl_{p_1, p_2, p_3}(\bP^3)$ 
is an equivariant compactification of $\mathbb G_m^3$. Moreover, the closure $\tilde{S}$ 
of $M$ in $B$ is the minimal desingularization of $S$ and 
the class of $\tilde{S}$ in $\Pic(B)$ is ample. 
Put $X:=B\times \bP^1$ and $Y:=\tilde{S}\times \bP^1$. 
We have a diagram

\

\centerline{
\xymatrix{ 
X\ar[d]_{\pi} &   \ar@{_{(}->}[l] Y\ar[d] \\
B            &    \ar@{_{(}->}[l] \tilde{S}, 
}
}

\

\noindent
Then $Y$ is a nef divisor, and we have
$$
\rk \, \NS(Y) = 8 > \rk \, \NS(X) = 5.
$$
However, the anticanonical class $-K_X$ is still balanced
with respect to $Y$ since
\begin{align*}
&a(Y, -K_X|_Y) = a(X, -K_X) =1\\
&b(Y, -K_X|_Y) = 1 < b(X, -K_X) = 5.
\end{align*}
This shows that, in general, we cannot expect to control the subgroup of $\NS(X)$ 
generated by vertical divisors. In the proof of Proposition~\ref{prop:weakbalance_toric}, we
were able to control the {\em quotient} by this subgroup.   
\end{exam}

\bibliographystyle{alpha}
\bibliography{balanced}

\end{document}